\newtheorem{thm}{Theorem}[section]
\newtheorem{cor}[thm]{Corollary}
\newtheorem{example}[thm]{Example}
\newtheorem{lem}[thm]{Lemma}
\newtheorem{rem}[thm]{Remark}
\numberwithin{equation}{section}
\newcommand{\tr}{\mathrm{tr}}
  \newcommand{\R}{{\mathbb{R}}}
  \newcommand{\U}{{\mathcal{U}}}
  \newcommand{\Z}{{\mathbb{Z}}}
\newcommand{\Aut}{\operatorname{Aut}}
\newcommand{\rank}{\operatorname{rank}}
\newcommand{\Bk}{\Bar{k}}
\def\quotient#1#2{
    \raise1ex\hbox{$#1$}\big/\lower1ex\hbox{$#2$}}
\def\Bigquotient#1#2{
    \raise1ex\hbox{$#1$}\Big/\lower1ex\hbox{$#2$}}
\tikzset{math3d/.style= {x={(1cm,0cm)}, y={(0.353cm,0.353cm)}, z={(0cm,1cm)}}}
\tikzstyle{vertex}=[circle, draw, fill=black, inner sep=0pt, minimum size=3pt]
\tikzstyle{fadedvertex}=[vertex, fill=black!30]
\tikzstyle{edge}=[line width=1pt]
\tikzstyle{fadededge}=[edge,color=black!30]
\begin{document}

\title[Rigidity of symmetric linearly constrained frameworks in the plane]{Rigidity of symmetric linearly constrained frameworks in the plane}
\author[Anthony Nixon]{Anthony Nixon}
\address{Dept.\ Math.\ Stats.\\ Lancaster University\\
Lancaster LA1 4YF \\U.K.}
\email{a.nixon@lancaster.ac.uk}
\author[Bernd Schulze]{Bernd Schulze}
\address{Dept.\ Math.\ Stats.\\ Lancaster University\\
Lancaster LA1 4YF \\U.K.}
\email{b.schulze@lancaster.ac.uk}
\author[Joseph Wall]{Joseph Wall}
\address{Dept.\ Math.\ Stats.\\ Lancaster University\\
Lancaster LA1 4YF \\U.K.}
\email{j.wall@lancaster.ac.uk}
\thanks{2010 {\it  Mathematics Subject Classification.}
52C25; 05C70; 20C35.\\
Key words and phrases: rigidity, linearly constrained framework, incidental symmetry, symmetric framework, recursive construction}

\begin{abstract}
A bar-joint framework $(G,p)$ is the combination of a finite simple graph $G=(V,E)$ and a placement $p:V\rightarrow \mathbb{R}^d$. The framework is rigid if the only edge-length preserving continuous motions of the vertices arise from isometries of the space. Motivated by applications where boundary conditions play a significant role, one may generalise and consider linearly constrained frameworks where some vertices are constrained to move on fixed affine subspaces. Streinu and Theran  characterised exactly which linearly constrained frameworks are generically rigid in 2-dimensional space. In this article we extend their characterisation to symmetric frameworks.
In particular necessary combinatorial conditions are given for a symmetric linearly constrained framework in the plane to be isostatic (i.e. minimally infinitesimally rigid) under any finite point group symmetry. 
In the case of rotation symmetry groups whose order is either 2 or odd, these conditions are then shown to be sufficient under suitable genericity assumptions, giving precise combinatorial descriptions of symmetric isostatic graphs in these contexts.
\end{abstract}

\date{}
\maketitle
%\tableofcontents

\section{Introduction}

A (bar-joint) \emph{framework} $(G,p)$ is the combination of a finite simple graph 
$G=(V,E)$ and a map $p:V\rightarrow \mathbb{R}^d$ which assigns positions to the vertices, and hence lengths to the edges. With stiff bars for the edges and full rotational freedom for the joints representing the vertices, the topic of rigidity theory concerns whether the framework may be deformed without changing the graph structure or the bar lengths. While `trivial' motions are always possible due to actions of the Euclidean isometry group, the framework is \emph{flexible} if  a non-trivial motion is possible and \emph{rigid} if no non-trivial motion exists. Applications of rigidity abound, see \cite{cg} inter alia.

The problem of determining whether a given framework is rigid is computationally difficult \cite{Abb}. However, every graph has a typical behaviour in the sense that either all `generic' (i.e. almost all) frameworks with the same underlying graph are rigid or all are flexible. So, generic rigidity depends only on the graph and is often studied using a linearisation known as infinitesimal rigidity, which is equivalent to rigidity for generic frameworks  
\cite{AR}.
On the real line it is a simple folklore result that rigidity coincides with graph connectivity. In the plane a celebrated theorem due to Polaczek-Geiringer \cite{PG}, often referred to as Laman's theorem due to a rediscovery in the 1970s \cite{laman}, characterises the generically rigid graphs precisely in terms of graph sparsity counts, and these combinatorial conditions can be checked in polynomial time. Combinatorial characterisations of generically rigid graphs in dimension $3$ or higher have not yet been found.

A linearly constrained framework is a bar-joint framework in which certain vertices are constrained to lie in given affine subspaces, in addition to the usual distance constraints between pairs of vertices. Linearly constrained frameworks arise naturally in practical applications where objects may  be constrained to move, for example, on the ground, along a wall, or in a groove. In particular, slider joints are very common in mechanical engineering (see, e.g., \cite{EJNSTW}) and have been applied to modelling boundary conditions in biophysics \cite{Tetal}.  

Streinu and Theran \cite{STh} proved a characterisation of generic rigidity for linearly constrained frameworks in $\R^2$. The articles \cite{CGJN,JNT} provide an analogous  characterisation for generic rigidity of
linearly constrained frameworks in $\R^d$ as long as the dimensions of the  affine subspaces at each vertex are sufficiently small (compared to $d$), and \cite{GJN} characterises the stronger notion of global rigidity in this context.

Separately, the genericity hypothesis, while natural from an algebraic geometry viewpoint, does not apply in many practical applications of rigidity theory. In particular,  structures in mechanical and structural engineering, computer-aided design, biophysics, and materials science often exhibit non-trivial symmetry. This has motivated multiple groups of researchers to study the rigidity of symmetric structures over the last two decades, which has led to an explosion of results in this area. We direct the reader to \cite{cg,SW2} for details. Importantly, there are two quite different notions of symmetric rigidity that one may consider. Firstly, \emph{forced symmetric rigidity} concerns frameworks that are symmetric and only motions that preserve the symmetry are allowed (that is, a framework may be flexible but if all the non-trivial motions destroy the symmetry then it is still  `forced symmetric rigid'). Secondly, \emph{incidental symmetric rigidity} again concerns symmetric frameworks, but the question of whether they are rigid is the same as in the non-symmetric case. 

It is incidental symmetry that we focus on in this article. More specifically, we are interested in describing, combinatorially, when a generic symmetric linearly constrained framework in the plane is \emph{isostatic}, i.e. minimally infinitesimally rigid in the sense that it is infinitesimally rigid but ceases to be so after deleting any edge. The corresponding question for standard bar-joint frameworks in the plane has been studied in  \cite{schulze,BS4}. In these papers, 
Laman-type results have been established for the groups generated by a reflection, the half-turn and a three-fold rotation, but these problems remain open for the other groups that allow isostatic frameworks.

In this article we give the first rigidity-theoretic analysis of symmetric linearly constrained frameworks. 
Our main results are  representation-theoretic necessary conditions for isostaticity for all relevant symmetry groups (Section~\ref{sec:necrep}), as well as complete combinatorial characterisations of symmetry-generic isostatic frameworks for the groups generated by a rotation of order either 2 or odd. These results, Theorems \ref{thm:mainc2} and \ref{thm:maincodd}, are proved in Sections \ref{sec:C2} and \ref{sec:Cn}.
The proofs of these combinatorial characterisations rely on symmetry-adapted Henneberg-type recursive construction moves described in Section~\ref{sec:ops}. 

In \cite{NSW} we analysed symmetric frameworks all of whose points are constrained to lie on a single surface. This situation can be modelled by linearly constrained frameworks where the linear constraints are chosen to represent the normals to the given surface. From this viewpoint one may wonder about the case when the surface is sufficiently generic. The results of this paper point in this direction but focus on the 2-dimensional case.
Moreover the present setting is more general in the sense that the set of the framework points that have associated linear constraints is not pre-determined.

\section{Rigidity theory}\label{sec:basics}

Throughout this paper we will use the term {\em graph} to describe a graph which may contain
multiple edges and loops and denote such a graph by $G=(V,E,L)$ where $V,E,L$ are the sets of vertices, edges and loops, respectively. We will use the terms {\em simple graph} to describe a graph which contains neither multiple edges nor loops, and {\em looped simple graph} to describe a graph which contains no multiple edges but may contain (multiple) loops.

\subsection{Linearly constrained frameworks}

Following \cite{CGJN}, we define a {\em linearly constrained framework in $\R^d$}  to be a triple $(G,
p, q)$ where $G=(V,E,L)$ is a looped simple graph,
$p:V\to \R^d$ is injective and
$q:L\to \R^d$. For $v_i\in V$ and $\ell_j\in L$ we put $p(v_i)=p_i$ and
$q(\ell_j)=q_j$.

An {\em infinitesimal motion} of $(G, p, q)$ is a map $\dot
p:V\to \R^d$ satisfying the system of linear equations:
\begin{eqnarray}
\label{eqn1} (p_i-p_j)\cdot (\dot p_i-\dot p_j)&=&0 \quad \mbox{ for all $v_iv_j \in E$}\\
\label{eqn2} q_j\cdot \dot p_i&=&0 \quad \mbox{ for all incident pairs $v_i\in V$ and
$\ell_j \in L$.}
\end{eqnarray}
The second constraint implies that  the infinitesimal velocity of each
$v_i\in V$ is constrained to lie on the hyperplane through $p_i$ with normal vector $q_j$
for each loop $\ell_j$ incident to $v_i$.

The {\em rigidity matrix $R (G, p, q)$} of the linearly constrained framework $(G, p, q)$ is  the
matrix of coefficients of this system of equations for the unknowns
$\dot p$. Thus $R (G, p, q)$ is a $(|E|+|L|)\times d|V|$ matrix, in
which: the row indexed  by an edge $v_iv_j\in E$ has $p_i-p_j$ and
$p_j-p_i$ in the $d$ columns indexed by $v_i$ and $v_j$,
respectively and zeros elsewhere; the row indexed  by a loop
$\ell_j=v_iv_i\in L$ has $q_j$  in the $d$ columns indexed by $v_i$ and
zeros elsewhere. The $|E|\times d|V|$ sub-matrix consisting of the rows indexed by $E$ is the {\em bar-joint rigidity matrix} $R(G-L,p)$ of the bar-joint framework $(G-L,p)$.

The framework $(G, p, q)$ is {\em infinitesimally rigid} if its only
infinitesimal motion is $\dot p =0$, or equivalently if $\rank R(G,
p, q) = d|V|$.   A framework $(G,p,q)$  is called \emph{isostatic} if it is infinitesimally rigid and \emph{independent} in the sense that the rigidity matrix of $(G,p,q)$ has no non-trivial row dependence.
Equivalently, $(G,p,q)$ is isostatic if it is infinitesimally rigid and deleting any single edge results in a framework that is not infinitesimally rigid.

\begin{example}
Suppose that $G$ consists of a single vertex with one loop and $(G,p,q)$ is a framework in $\mathbb{R}^2$. Then $G$ is not infinitesimally rigid since the translation along the line corresponding to the loop is an infinitesimal motion (and any infinitesimal motion is considered non-trivial in our context). 

Similarly a complete graph, realised generically, is infinitesimally rigid in any dimension as a bar-joint framework. However as a linearly constrained framework it is not infinitesimally rigid since the translations and rotations are infinitesimal motions.
\end{example} 

A linearly constrained framework $(G,p,q)$ in $\R^d$ is {\em generic} if $\textrm{rank } R(G,p, q) \geq \textrm{rank } R(G,p', q') $ for all frameworks $(G,p',q')$ in $\R^d$.

We say that the looped simple graph $G$ is  {\em rigid} in $\R^d$
if $\rank R(G, p, q) = d|V|$ for some realisation $(G,p, q)$ in
$\R^d$, or equivalently if $\rank R(G, p, q) = d|V|$ for all {\em
generic} realisations $(G,p,q)$.  Similarly,  we define  $G$ to be \emph{isostatic (independent)} if there exists
a framework $(G,p,q)$  that is  isostatic (independent). 

Streinu and Theran gave the following  characterisation of looped simple graphs which are rigid in  $\mathbb{R}^2$. We will say that $G=(V,E,L)$ is: \emph{sparse} if $|E'|+|L'|\leq 2|V'|$ for all subgraphs $(V',E')$ of $G$ and $|E'|\leq 2|V'|-3$ for all simple subgraphs with $|E'|>0$; and \emph{tight} if it is sparse and $|E|+|L|=2|V|$.

\begin{thm}\label{thm:lm}\cite{STh}
A generic linearly constrained framework $(G,p,q)$ in $\mathbb{R}^2$ is isostatic if and only if $G$ is tight.
\end{thm}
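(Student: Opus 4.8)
The plan is to treat the two implications separately. \emph{Necessity} is a short linear-algebra argument. If the generic framework $(G,p,q)$ is isostatic then $R(G,p,q)$ has $|E|+|L|$ independent rows and rank $2|V|$, so $|E|+|L|=2|V|$. For any subgraph $H=(V',E',L')$ the rows indexed by $E'\cup L'$ are supported on the $2|V'|$ columns indexed by $V'$ and remain independent, giving $|E'|+|L'|\le 2|V'|$; and if $H$ is simple these are rows of the bar-joint rigidity matrix $R(H,p|_{V'})$, whose rank is at most $2|V'|-3$ whenever $|V'|\ge2$, because $p$ is injective and hence the three independent infinitesimal isometries of $\R^2$ lie in its kernel. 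Thus $G$ is tight.

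For \emph{sufficiency} I would argue in two steps. First, a purely combinatorial \emph{recursive construction}: every tight looped simple graph is obtained from the one-vertex graph carrying two loops by repeatedly applying a finite list of tightness-preserving operations. The natural candidates are the three \emph{$0$-extensions}, which introduce a new vertex together with either (i) two new loops, (ii) one new loop and one new edge, or (iii) two new edges to existing vertices; and the \emph{$1$-extension}, which deletes an existing edge $xy$ and adds a new vertex $v_0$, edges $v_0x$ and $v_0y$, and one further edge or loop incident to $v_0$. Each operation raises $|V|$ by $1$ and $|E|+|L|$ by $2$, so one checks that it preserves both the $2|V'|$ bound and the $2|V'|-3$ bound, and, conversely, that every tight $G$ with $|V|\ge2$ admits some such reverse operation. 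This converse is the heart of the matter: after locating a vertex of small degree one must show that the corresponding reverse $1$-extension can reconnect its two neighbours by an edge, or that a compensating loop can be added, without violating sparsity --- the familiar analysis of the subgraphs attaining equality in the count.

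Secondly, one verifies that each operation preserves \emph{generic} isostaticity. For the $0$-extensions this is immediate: starting from a generic isostatic realisation of the smaller graph, the $2\times2$ block of the two new rows in the two fresh columns of the new vertex is nonsingular for a generic choice of that vertex's position and of the new normals, so the new rows are independent of all others and one obtains an isostatic realisation of the larger graph. For the $1$-extension one uses the standard perturbation argument: realise $v_0$ close to the line through $p_x$ and $p_y$ with the extra constraint generic, pivot the two new edge rows against the columns of $v_0$, and observe that the rank question reduces to whether the row then contributed by the extra constraint pairs nontrivially with the unique infinitesimal flex of $G'-xy$; this can be arranged, so $R$ has full rank at that placement and hence generically.

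\emph{Main obstacle.} The geometric steps are routine; the real work is the combinatorial recursive construction, in particular proving that a valid reverse $1$-extension always exists in a tight graph on at least two vertices --- a counting-and-connectivity analysis of the tight subgraphs. An alternative route that avoids the induction is matroidal: show that the row matroid of the generic linearly constrained rigidity matrix equals the count matroid whose independent sets are exactly the sparse edge-and-loop sets, obtained by combining the $2$-dimensional generic rigidity matroid on the edges with a $(2,2)$-count (pseudoforest-type) matroid that absorbs the loops, thereby reducing the theorem to the classical Laman theorem for planar bar-joint frameworks together with a matroid-union argument.
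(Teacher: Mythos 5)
The paper does not actually prove this statement: Theorem~\ref{thm:lm} is quoted from Streinu and Theran \cite{STh}, so there is no internal proof to compare against. Judged on its own terms, your outline is viable. The necessity half is complete and correct: independence of all rows gives $|E'|+|L'|\le 2|V'|$ for every subgraph, and for a simple subgraph with an edge the trivial infinitesimal isometries restrict to a $3$-dimensional kernel (since $p$ is injective and $|V'|\ge 2$), giving $|E'|\le 2|V'|-3$; with $\rank R(G,p,q)=2|V|=|E|+|L|$ this is exactly tightness. Your sufficiency half is the Henneberg-induction route, which is precisely the machinery this paper develops in Sections~\ref{sec:ops}--\ref{sec:Cn} for the symmetric generalisations (your $0$- and $1$-extensions are the $\Gamma=C_1$ cases of Lemmas~\ref{0-ext rigid} and~\ref{1-ext rigid}, and your ``compensating loop'' variant is the paper's looped $1$-extension), whereas \cite{STh} itself argues via sparsity-matroid decompositions, closer to the alternative you sketch in your last sentence.

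What you have not supplied is the combinatorial core, and two concrete points there need care. First, the vertex-selection step: from $\sum_v(d_E(v)+d_L(v))=2|E|+|L|=4|V|-|L|$ and $|E|\le 2|V|-3$ one gets a vertex incident to at most three edges-plus-loops; but a vertex carrying one edge and two loops is not removable by any of your listed reverse moves, so you must show such vertices can be sidestepped (a short count works: if every vertex met exactly three elements only in the one-edge-two-loops configuration, say there are $n_3$ of them, then $|L|\ge 2n_3$ while the degree sum forces $|L|\le n_3$, so $n_3=0$ and then $|L|=0$, contradicting $|E|\le 2|V|-3$). Second, for a vertex with two edges and a loop your reverse $1$-extension as stated inserts the edge $xy$ between its two neighbours; this can be blocked ($xy$ already present, or $x,y$ contained in a subgraph attaining $|E'|=2|V'|-3$ or $|E'|+|L'|=2|V'|$), in which case you must instead transfer the loop to a neighbour --- and proving that one of these options always succeeds is exactly the critical-set analysis the paper carries out in its symmetric setting (compare Lemmas~\ref{lem: c2 0,1,3,4 crit} and~\ref{lem: c_2 2 edges 1 loop redu}). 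So the strategy is right and the geometric steps are fine, but the heart of the argument remains to be written.
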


While the theorem gives a complete answer in the generic case, the present article will extend this to apply under the presence of non-trivial symmetries.

It would be an interesting future project to extend our analysis to higher dimensions. While for bar-joint frameworks little is known when $d\geq 3$, in the linearly constrained case characterisations are known when suitable assumptions are made on the affine subspaces defined by the linear constraints \cite{CGJN,JNT}.

\subsection{Symmetric linearly constrained frameworks}

Let $G= (V,E,L)$ be a looped simple graph and $\Gamma$ be a finite group.
Then the pair $(G,\phi)$ is called $\Gamma$-symmetric if $\phi: \Gamma \to \Aut (G)$ is a homomorphism, where $\Aut (G)$ denotes the automorphism group of $G$.
Note that an automorphism $\phi(\gamma)$ of $G$ consists of a permutation of the vertices, $\phi_1(\gamma)$, and a permutation of the loops, $\phi_2(\gamma)$, so that $v_iv_j\in E$ if and only if $\phi_1(v_i)\phi_1(v_j)\in E$, and  $v_i$ is incident to the loop $l_j$ if and only if $\phi_1(v_i)$ is incident to $\phi_2(l_j)$. The permutation $\phi_1(\gamma)$ clearly induces a permutation of the edges in $E$. Moreover, $\phi_1(\gamma)$ must map a vertex with $n$ loops to another vertex with $n$ loops, and a loop can only be fixed by $\phi_2(\gamma)$ (i.e., $\phi_2(\gamma)(l_j)=l_j$) if it is incident to a vertex $v_i$ that is fixed by $\phi_1(\gamma)$ (i.e., $\phi_1(\gamma)(v_i)=v_i$).  In our context, a loop always represents a linear constraint in the plane. Thus, we will assume that a loop at $v_i$ can only be fixed by $\phi_2(\gamma)$ if $\gamma$ is the identity or an element of order $2$ (since a line in the plane cannot be unshifted  by an isometry of order greater than $2$).

If $\Gamma$ is clear from the context, then  a $\Gamma$-symmetric graph will often simply be called \emph{symmetric}.
Similarly, if $\phi$ is clear from context, we may simply refer to the $\Gamma$-symmetric graph $(G,\phi)$ by $G$.

Let $(G,\phi)$ be a $\Gamma$-symmetric looped simple graph.
Then, for a homomorphism $\tau: \Gamma \to O(\R^{2})$, we say that a linearly constrained framework $(G,p,q)$ is \emph{$\Gamma$-symmetric (with respect to $\phi$ and $\tau$)}, or simply \emph{$\tau(\Gamma)$-symmetric}, if
\begin{itemize}
    \item  $\tau(\gamma)p_{i} = p_{\phi(\gamma)i}$ for all $v_i\in V$ and all $\gamma \in \Gamma$;
    \item $\tau(\gamma)q_{j} = q_{\phi(\gamma)j}$ for all $l_j \in L$ and all $\gamma \in \Gamma$ whose order is not $2$;
    \item $\tau(\gamma)q_{j} = -q_{\phi(\gamma)j}$ if $\tau(\gamma)$ is the half-turn and the loop $l_j$ is fixed by $\gamma\in \Gamma$;
      \item $\tau(\gamma)q_{j} = \pm q_{\phi(\gamma)j}$ if $\tau(\gamma)$ is a reflection and the loop $l_j$ is fixed by $\gamma\in \Gamma$.
\end{itemize}

We will refer to $\tau(\Gamma)$ as a \emph{symmetry group} and to  elements of $\tau(\Gamma)$ as \emph{symmetry operations} or simply \emph{symmetries} of $(G,p,q)$.  

A $\Gamma$-symmetric linearly constrained framework $(G,p,q)$ is \emph{$\Gamma$-generic (with respect to $\tau$ and $\phi$)} if $\textrm{rank }R(G,p,q)\geq \textrm{rank }R(G,p',q')$ for all linearly constrained frameworks $(G,p',q')$ that are $\Gamma$-symmetric with respect to $\tau$ and $\phi$.
The set of all $\Gamma$-generic realisations of $G$ (with respect to $\tau$ and $\phi$) is an open dense subset of the set of all $\Gamma$-symmetric realisations of $G$ (with respect to $\tau$ and $\phi$).
Thus, we may say that a graph $G$ is \emph{$\tau(\Gamma)$-isostatic (independent, infinitesimally rigid, rigid)} if there exists a $\Gamma$-symmetric framework $(G,p,q)$ (with respect to $\tau$ and $\phi$) which is isostatic (independent, infinitesimally rigid, rigid).
Later we will often remove $\phi$ from this notation and simply refer to a  $\tau(\Gamma)$-isostatic (independent, infinitesimally rigid, rigid) graph (where $\phi$ is clear from the context).

Throughout this paper, we will use a version of the Schoenflies notation for symmetry operations and groups.
The relevant symmetry operations in the plane are the identity, denoted by  $\textrm{id}$, rotations by $\frac{2\pi}{n}$, $n\in \mathbb{N}$, about the origin, denoted by $c_n$ and reflections in lines through the origin, denoted by $\sigma$.
The relevant  symmetry groups for this paper are the group $C_s$ generated by a reflection $\sigma$, the cyclic groups $C_n$ generated by a rotation $c_n$ (where $C_1$ is just the trivial group), and the dihedral group $C_{nv}$, which is generated by $c_n$, $n\geq 2$, and $\sigma$. 

\section{Necessary Conditions for Isostatic Linearly-Constrained Frameworks}\label{sec:necrep}

In this section, we will establish necessary conditions for a symmetric linearly constrained framework in the plane to be isostatic. To this end we first show that the rigidity matrix  of a symmetric linearly constrained framework can be transformed into a block-decomposed form by using suitable symmetry-adapted bases. The necessary conditions are then obtained by comparing the number of rows and columns of each submatrix block. Using basic character theory, these conditions can be stated simply in terms of the number of structural components that remain unshifted  under the various symmetries of the framework.

\subsection{Block-diagonalisation of the rigidity matrix}

We need the following basic definitions.

If $A$ is a $m \times n$ matrix and $B$ is a $p \times q$ matrix, the Kronecker product $A\otimes B$ is the $pm \times qn$ block matrix:
\[A\otimes B = \left[
\begin{tabular}{ c c c }
        $b_{11}A$ & $\dots$ & $b_{1q}A$\\
        $\vdots$ & $\ddots$ & $\vdots$ \\
        $b_{p1}A$ & $\dots$ & $b_{pq}A$ \\
    \end{tabular}
\right].
\]
Let $\Gamma$ be a finite group and let $\tau(\gamma)$ denote the $2 \times 2$ matrix which represents $\gamma$ with respect to the canonical basis of $\R^{2}$. For a given $\Gamma$-symmetric framework $(G,p,q)$ (with respect to $\phi$ and $\tau$) and any $\gamma\in\Gamma$, we let
 $P_V(\gamma)$, $P_E(\gamma)$ and $P_L(\gamma)$ be the permutation matrix of $V$, $E$ and $L$ respectively, induced by  the automorphism $\phi(\gamma)$.
 
Let $P_{L}^*: \Gamma \to O(\R^{|L|})$ be the representation obtained from $P_L$ by replacing each $1$ in the $j$-th diagonal entry of $P_L(\gamma)$ by $-1$ if $\tau(\gamma)q_j = - q_j$ (i.e. if the loop $l_j$ is fixed by $\phi_2(\gamma)$ and either $\tau(\gamma)$ is the half-turn, or $\tau(\gamma)$ is a reflection and the normal $q_j$ of the line corresponding to $l_j$ is perpendicular to the line of reflection).

We may then form the two key representations   $\tau \otimes P_V : \Gamma \to O(\R^{(2|V|)\times (2|V|)})$ and $P_{E,L} := P_E \oplus P_L^* : \Gamma \to O(\R^{(|E|+|L|) \times (|E|+|L|)})$ that are needed to block-decompose the rigidity matrix of $(G,p,q)$.

\begin{lem}\label{lem: Rextu=intz}
Let $G$ be a graph, $\tau(\Gamma)$ be a symmetry group, and $\phi: \Gamma \to \Aut(G)$ be a homomorphism.\\
If $R(G,p,q)u = z$, then for all $\gamma \in \Gamma$, we have $$R(G,p,q)(\tau\otimes P_{V})(\gamma)u = P_{E,L}(\gamma)z.$$
\end{lem}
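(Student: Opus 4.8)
The plan is to verify the claimed intertwining relation row-by-row, exploiting the explicit description of the rigidity matrix together with the symmetry conditions on $p$ and $q$. Fix $\gamma \in \Gamma$ and write $u' = (\tau \otimes P_V)(\gamma) u$. The matrix $(\tau \otimes P_V)(\gamma)$ acts on a vector $u = (u_1,\dots,u_{|V|})$ with $u_i \in \R^2$ by sending the block indexed by $v_i$ to $\tau(\gamma)^{-1} u_{\phi(\gamma)^{-1} i}$ — more precisely, one should first pin down the precise convention so that $(\tau\otimes P_V)(\gamma)$ is a genuine group representation; with the conventions in the paper the $i$-th block of $u'$ is $\tau(\gamma) u_{\phi(\gamma)^{-1} i}$. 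I would then compute, for each row of $R(G,p,q)$, the corresponding entry of $R(G,p,q) u'$ and check it equals the corresponding entry of $P_{E,L}(\gamma) z$.

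First I would handle the edge rows. The row indexed by an edge $v_iv_j \in E$ applied to $u'$ gives $(p_i - p_j)\cdot(\tau(\gamma)u_{\phi(\gamma)^{-1}i} - \tau(\gamma)u_{\phi(\gamma)^{-1}j})$. Since $\tau(\gamma)$ is orthogonal, this equals $(\tau(\gamma)^{-1}(p_i-p_j))\cdot(u_{\phi(\gamma)^{-1}i} - u_{\phi(\gamma)^{-1}j})$, and by the symmetry relation $\tau(\gamma)^{-1}p_i = p_{\phi(\gamma)^{-1}i}$ this becomes $(p_{\phi(\gamma)^{-1}i} - p_{\phi(\gamma)^{-1}j})\cdot(u_{\phi(\gamma)^{-1}i} - u_{\phi(\gamma)^{-1}j})$, which is precisely the entry $z_{e'}$ of $z$ where $e' = \phi(\gamma)^{-1}(v_iv_j)$ is a genuine edge since $\phi(\gamma)$ is a graph automorphism. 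That is exactly the $(v_iv_j)$-entry of $P_E(\gamma)z$. Next I would handle the loop rows. The row indexed by a loop $\ell_j$ at $v_i$ applied to $u'$ gives $q_j \cdot \tau(\gamma)u_{\phi(\gamma)^{-1}i} = (\tau(\gamma)^{-1}q_j)\cdot u_{\phi(\gamma)^{-1}i}$. Here I would split into cases according to the symmetry conditions on $q$: if $\gamma$ has order not equal to $2$, then $\tau(\gamma)^{-1}q_j = q_{\phi(\gamma)^{-1}j}$ and we get $z_{\ell'}$ with $\ell' = \phi(\gamma)^{-1}\ell_j$, matching $P_L(\gamma)z$; if $\tau(\gamma)$ is a half-turn (or a reflection) and $\ell_j$ is fixed by $\gamma$, then $\tau(\gamma)^{-1}q_j = \pm q_j$ with the sign exactly the one recorded in $P_L^*(\gamma)$, so we get $\pm z_{\ell_j}$, again matching the corresponding row of $P_L^*(\gamma)z$. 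The other reflection sub-case (loop moved to a different loop, or fixed with a $+$ sign) is subsumed in the same computation. Assembling the edge part and the loop part gives $R(G,p,q)u' = (P_E \oplus P_L^*)(\gamma) z = P_{E,L}(\gamma)z$.

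The only genuinely delicate point — and the step I expect to require the most care — is getting the index conventions and the placement of $\gamma$ versus $\gamma^{-1}$ exactly right so that $(\tau\otimes P_V)$ and $P_{E,L}$ are honest homomorphisms and the identity holds as stated (rather than with $\gamma^{-1}$); this is purely bookkeeping but is the sort of thing that is easy to get backwards. Once the conventions are fixed, the verification is the routine case analysis above, using only orthogonality of $\tau(\gamma)$, the defining symmetry relations on $p$ and $q$, the definition of $P_L^*$ via the sign of $\tau(\gamma)q_j$, and the fact that $\phi(\gamma)$ is an automorphism of $G$ (so it permutes $E$ and permutes $L$ compatibly with incidences). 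I would present the proof by fixing $\gamma$, writing down the action of $(\tau\otimes P_V)(\gamma)$ explicitly, and then computing the edge-rows and loop-rows of $R(G,p,q)(\tau\otimes P_V)(\gamma)u$ separately, concluding in each case that the result is the appropriate permuted-and-signed entry of $z$.
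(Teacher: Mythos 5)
Your proposal is correct and follows essentially the same route as the paper's proof: a row-by-row verification that the intertwining identity holds, using orthogonality of $\tau(\gamma)$, the symmetry relations on $p$ and $q$, and the sign convention built into $P_L^*$, with the same case split on whether a loop is fixed and whether its normal is preserved or inverted. The only difference is that you verify the edge rows directly, whereas the paper delegates that part of the computation to the bar-joint case in \cite{BS2} and only writes out the loop rows; your explicit treatment of the index conventions (placing $\phi(\gamma)^{-1}$ so that $\tau\otimes P_V$ and $P_{E,L}$ are genuine representations) is exactly the bookkeeping the paper leaves implicit.
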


\begin{proof}
Suppose $R(G,p,q)u = z$.
Fix $\gamma \in \Gamma$ and let $\tau(\gamma)$ be the orthogonal matrix representing $\gamma$ with respect to the canonical basis of $\R^{2}$.
We enumerate the rows of $R(G,p,q)$ by the set $\{a_{1}, \dots, a_{|E|}, b_{1}, \dots, b_{|L|}\}$.
By \cite{BS2}, we know that $(R(G,p)(\tau\otimes P_V)(\gamma)u)_{a_{i}} = (P_{E,L}(\gamma)z)_{a_{i}}$, for all $i \in [|E|]$. We are left to show the result holds for the rows of $R(G,p,q)$ which represent the normal vectors of the vertices with loops.

We label the vertices of $V$ by $v_1,\dots, v_{|V|}$ and the loops of $L$ by $l_1,\dots, l_{|L|}$.
Write $u \in \R^{2|V|}$ as $u = (u_{1}, \dots, u_{|V|})$, where $u_{i} \in \mathbb{R}^2$ for all $v_i \in V$, and let the loop $l_j$ be incident to $v_i$. Furthermore let $\Phi(\gamma)(l_j) = l_k$ and $\Phi(\gamma)(v_i) = v_m$.

\emph{Case (1):} Suppose that $j\neq k$ or that $j=k$ and $\tau(\gamma)q_j=q_j$.
We first see that $(P_{E,L}(\gamma)z)_{b_{k}} = z_{b_{j}}$
by the definition of $P_{L}^*(\gamma)$.
From $R(G,p,q)u = z$, we also get that $z_{b_{j}} = q_{j} \cdot u_{i}$.
Therefore,
\begin{align*}
    (R(G,p,q)(\tau\otimes P_{V})(\gamma)u)_{b_{k}}
    &= (q_{k})_{1} \cdot (\tau(\gamma)u_{i})_{1} + (q_{k})_{2} \cdot (\tau(\gamma)u_{i})_{2}\\
    &= q_{k} \cdot (\tau(\gamma)u_{i})\\
    &= (\tau(\gamma)q_{j}) \cdot (\tau(\gamma)u_{i}).
\end{align*} 

\emph{Case (2):} Suppose that $j=k$ and $\tau(\gamma)q_j=-q_j$.
We now see that $(P_{E,L}(\gamma)z)_{b_{j}} = -z_{b_{j}}$
by the definition of $P_{L}^*(\gamma)$.
From $R(G,p,q)u = z$, we also get that $z_{b_{j}} = q_{j} \cdot u_{i}$. So
\begin{align*}
    (R(G,p,q)(\tau\otimes P_{V})(\gamma)u)_{b_{j}}
    &= (q_{j})_{1} \cdot (\tau(\gamma)u_{i})_{1} + (q_{j})_{2} \cdot (\tau(\gamma)u_{i})_{2}\\
    &= q_{j} \cdot (\tau(\gamma)u_{i})\\
    &= (-\tau(\gamma)q_{j}) \cdot (\tau(\gamma)u_{i}).
\end{align*}
Finally, 
since the canonical inner product on $\R^{2}$ is invariant under the orthogonal transformation $\tau(\gamma) \in O(\mathbb{R}^2)$, in case (1) we obtain 
$$(\tau(\gamma)q_{j}) \cdot (\tau(\gamma)u_{i}) = q_{j} \cdot u_{i} = z_{b_{j}},$$
as desired. Similarly, in case (2) we obtain:
$$(-\tau(\gamma)q_{j}) \cdot (\tau(\gamma)u_{i}) = -q_{j} \cdot u_{i} = -z_{b_{j}}$$
finishing the proof.
\end{proof}

The following is an immediate corollary of Schur's lemma (see e.g. \cite{Serre}) and the lemma above.

\begin{cor}
Let $(G,p,q)$ be a $\tau(\Gamma)$-symmetric framework and let $I_{1}, \dots, I_{r}$  be the  pairwise non-equivalent irreducible linear representations of $\tau(\Gamma)$.
Then there exist matrices $A,B$ such that the matrices $B^{-1}R(G,p,q)A$ and $A^{-1}R(G,p,q)^{T}B$ are block-diagonalised and of the form
\[
\left(\begin{tabular}{ c c c c c }
        $R_{1}$ & & & & $\mathbf{0}$ \\
         & $R_{2}$ & & & \\
         & & $\ddots$ & & \\
         & & & & \\
        $\mathbf{0}$ & & & & $R_{r}$ \\
    \end{tabular} \right)
\]
where the submatrix $R_{i}$ corresponds to the irreducible representation $I_{i}$.
\end{cor}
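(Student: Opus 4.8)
The plan is to deduce this directly from Lemma~\ref{lem: Rextu=intz} by interpreting $R(G,p,q)$ as an intertwiner. The key observation is that Lemma~\ref{lem: Rextu=intz} says precisely that $R(G,p,q)$, viewed as a linear map $\R^{2|V|}\to\R^{|E|+|L|}$, satisfies
\[
R(G,p,q)\,(\tau\otimes P_V)(\gamma) \;=\; P_{E,L}(\gamma)\,R(G,p,q)
\]
for every $\gamma\in\Gamma$, i.e. it is a homomorphism of $\Gamma$-representations from $\tau\otimes P_V$ to $P_{E,L}=P_E\oplus P_L^*$. (One should first note that $\tau\otimes P_V$ and $P_{E,L}$ really are orthogonal representations of $\Gamma$: this follows from $\tau$ and the permutation/signed-permutation matrices being orthogonal, and from $(\tau\otimes P_V)$ being a genuine representation since $\tau$ and $P_V$ are, while $P_L^*$ is a representation because the sign change $\tau(\gamma)q_j=-q_j$ occurs consistently with composition — this is implicit in the setup preceding the lemma.)

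Next I would invoke the standard consequence of Schur's lemma and Maschke's theorem (as in \cite{Serre}): since both representations are over $\R$ (or one may complexify, work over $\C$, and descend), they decompose into isotypic components indexed by the pairwise non-equivalent irreducibles $I_1,\dots,I_r$ of $\tau(\Gamma)$, and any intertwiner respects this isotypic decomposition, carrying the $I_i$-isotypic component of the domain into the $I_i$-isotypic component of the codomain and killing nothing else across components. Concretely, choose a symmetry-adapted basis $A$ of $\R^{2|V|}$ adapted to the isotypic decomposition of $\tau\otimes P_V$, and a symmetry-adapted basis $B$ of $\R^{|E|+|L|}$ adapted to the isotypic decomposition of $P_{E,L}$. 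With respect to these bases the intertwining relation forces $B^{-1}R(G,p,q)A$ to be block-diagonal with one block $R_i$ per irreducible $I_i$, where $R_i$ is the restriction of the map to the $I_i$-isotypic components. Taking transposes and using that $A,B$ can be chosen orthogonal (so $A^{-1}=A^T$, $B^{-1}=B^T$), the same bases block-diagonalise $A^{-1}R(G,p,q)^T B$, which is just $(B^{-1}R(G,p,q)A)^T$ up to the obvious relabelling.

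The main obstacle — really the only subtlety beyond bookkeeping — is the real-versus-complex issue in applying Schur's lemma: over $\R$, the endomorphism ring of a real irreducible can be $\R$, $\C$, or $\mathbb H$, so "block-diagonal" must be understood in the sense of the real isotypic decomposition, and one must be slightly careful that the blocks $R_i$ are then indexed by real irreducibles rather than complex ones. For the groups actually used in this paper ($C_s$, $C_2$, $C_n$, $C_{nv}$) this is benign: $C_s$ and $C_2$ have only real one-dimensional irreducibles, and for $C_n$ and $C_{nv}$ the relevant pairs of complex-conjugate characters combine into real $2$-dimensional blocks in the familiar way, so the statement as written holds verbatim with the $I_i$ interpreted appropriately. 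I would handle this by either citing the real form of the decomposition theorem directly, or by complexifying, block-diagonalising over $\C$, and observing that the construction is defined over $\R$ because $R(G,p,q)$ is a real matrix and the representations are real. Everything else is the routine verification that the chosen change-of-basis matrices $A$ and $B$ are invertible and that the off-diagonal blocks vanish by Schur, which I would state without grinding through the explicit projection-operator formulas.
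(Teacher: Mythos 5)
Your proof is correct and follows essentially the same route as the paper, which states only that the corollary is an immediate consequence of Lemma~\ref{lem: Rextu=intz} (read as the intertwining relation $R(G,p,q)(\tau\otimes P_V)(\gamma)=P_{E,L}(\gamma)R(G,p,q)$) together with Schur's lemma, yielding the decompositions $\R^{2|V|}=X_1\oplus\cdots\oplus X_r$ and $\R^{|E|+|L|}=Y_1\oplus\cdots\oplus Y_r$. Your extra care over the real-versus-complex form of Schur's lemma is a point the paper leaves implicit, but it does not change the argument.
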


This block decomposition corresponds to the decompositions $\R^{2|V|} = X_{1} \oplus \cdots \oplus X_{r}$ and $\R^{|E|+|L|} = Y_{1} \oplus \cdots \oplus Y_{r}$.
The space $X_{i}$ is the $(\tau \otimes P_{V})$-invariant subspace of $\mathbb{R}^{2|V|}$ corresponding to $I_i$, and the space $Y_{i}$ is the $P_{E,L}$-invariant subspace of $\mathbb{R}^{|E|+|L|}$ corresponding to $I_i$.
Thus, the submatrix $R_{i}$ has size $(\dim(Y_{i})) \times (\dim(X_{i}))$.

Using the block-decomposition of the rigidity matrix, we may follow the basic approach described in \cite{FGsymmax,BS2} to derive added necessary conditions for a symmetric linearly constrained framework to be isostatic. We first need the following result.

If $A = (a_{ij})$ is a square matrix then the trace of $A$ is given by $\tr(A) = \sum_{i}a_{ii}$.
For a linear representation $\rho$ of a group $\Gamma$ and a fixed ordering $\gamma_1,\ldots, \gamma_{|\Gamma|}$ of the elements of $\Gamma$, the character of $\rho$ is the $|\Gamma|$-dimensional vector $\chi(\rho)$ whose $i$th entry is $\tr(\rho(\gamma_i))$. 

\begin{thm}\label{thm: character count}
Let $(G,p,q)$ be a $\tau(\Gamma)$-symmetric framework.
If $(G,p,q)$ is isostatic, then
$$\chi(P_{E,L}) = \chi( \tau \otimes P_{V}).$$
\end{thm}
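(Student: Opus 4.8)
The plan is to exploit the block-diagonalisation of the rigidity matrix established in the preceding corollary, together with the fact that isostaticity forces each block $R_i$ to be a square matrix. First I would recall that if $(G,p,q)$ is isostatic then $R(G,p,q)$ is a square matrix of full rank, so $|E|+|L| = 2|V|$. Under the change of basis given by $A$ and $B$, the matrix $B^{-1}R(G,p,q)A$ is block-diagonal with blocks $R_i$ of size $\dim(Y_i) \times \dim(X_i)$. Since $B^{-1}R(G,p,q)A$ still has full rank $2|V| = |E|+|L|$, and it is block-diagonal, each block $R_i$ must itself have full rank; but a full-rank matrix whose total row count equals its total column count, when broken into a direct sum, forces each summand to be square and invertible. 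Hence $\dim(X_i) = \dim(Y_i)$ for every $i \in \{1,\dots,r\}$.

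The second step is to translate the equalities $\dim(X_i)=\dim(Y_i)$ into an equality of characters. Here I would use the standard fact from character theory (as in \cite{Serre}) that for a representation $\rho$ of $\Gamma$, the multiplicity of the irreducible $I_i$ in $\rho$ is the inner product $\langle \chi(\rho), \chi(I_i)\rangle$, and that $\dim$ of the $I_i$-isotypic component equals this multiplicity times $\dim(I_i)$. Applying this to $\rho = \tau \otimes P_V$ gives $\dim(X_i) = \langle \chi(\tau\otimes P_V), \chi(I_i)\rangle \dim(I_i)$, and applying it to $\rho = P_{E,L}$ gives $\dim(Y_i) = \langle \chi(P_{E,L}), \chi(I_i)\rangle \dim(I_i)$. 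Since $\dim(X_i)=\dim(Y_i)$ and $\dim(I_i)\neq 0$, the multiplicities agree for every $i$. As the characters of the irreducibles form a basis of the space of class functions, a representation is determined up to isomorphism by its character, and two representations with the same multiplicities of each irreducible have the same character; therefore $\chi(P_{E,L}) = \chi(\tau \otimes P_V)$.

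I do not expect a serious obstacle here: the content is entirely in Lemma~\ref{lem: Rextu=intz} and its corollary, which have already been established, plus the dimension count for isostaticity. The one point requiring a little care is the argument that a block-diagonal full-rank square matrix has square invertible blocks; this follows because a rectangular block $R_i$ with strictly more columns than rows has a nontrivial kernel, which would produce a nontrivial kernel vector of the whole matrix (supported on $X_i$), contradicting full rank — and dually for strictly more rows than columns one uses the transpose, which is also block-diagonalised by the corollary. So the skeleton of the proof is: (i) isostatic $\Rightarrow$ $R(G,p,q)$ square of full rank; (ii) the block-diagonalised form has each $R_i$ square and invertible, so $\dim X_i = \dim Y_i$; (iii) convert these dimension equalities into multiplicities of irreducibles and conclude $\chi(\tau\otimes P_V) = \chi(P_{E,L})$.
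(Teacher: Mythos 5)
Your proof is correct, but it takes a longer route than the paper's. The paper's own argument is a one-liner: since $(G,p,q)$ is isostatic, $R(G,p,q)$ is a square invertible matrix, so the intertwining identity of Lemma~\ref{lem: Rextu=intz} can be rewritten as $R(G,p,q)(\tau\otimes P_V)(\gamma)R(G,p,q)^{-1}=P_{E,L}(\gamma)$ for all $\gamma$; the rigidity matrix itself is thus an isomorphism between the representations $\tau\otimes P_V$ and $P_{E,L}$, and isomorphic representations have equal characters. You instead pass through the block-diagonalisation corollary, argue that each block $R_i$ of the invertible block-diagonal matrix must be square (the kernel/transpose argument you sketch is sound, and summing $\dim X_i\le\dim Y_i$ against $\sum\dim X_i=\sum\dim Y_i$ closes it), and then convert the equalities $\dim X_i=\dim Y_i$ into equal multiplicities of each irreducible. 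Both routes ultimately rest on Lemma~\ref{lem: Rextu=intz} plus invertibility of $R(G,p,q)$, so the mathematical content is the same; yours just re-derives the representation isomorphism from its consequence (the block decomposition) rather than reading it off directly. One small caution in your step (iii): the formula $\dim(\text{isotypic component})=\langle\chi(\rho),\chi(I_i)\rangle\dim(I_i)$ is the complex-representation statement, whereas the paper's decomposition is over $\R$; for real irreducibles with nontrivial endomorphism algebra the inner product overcounts by a constant factor. This does not affect your conclusion, since equal isotypic dimensions still force equal multiplicities and hence equal characters, but the cleaner statement to invoke is simply that the conjugating matrix exhibits the two representations as isomorphic.
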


\begin{proof} 
Since $(G,p,q)$ is isostatic, the rigidity matrix of $(G,p,q)$ is a non-singular square matrix. 
Thus, by Lemma~\ref{lem: Rextu=intz}, we have $$R(G,p,q)(\tau\otimes P_V)(\gamma)(R(G,p,q))^{-1}= P_{E,L}(\gamma) \quad \textrm{ for all } \gamma\in \Gamma.$$ It follows that  $\tau \otimes P_{V}$ and $P_{E,L}$ are isomorphic representations of $\Gamma$. Hence, 
$$\chi(P_{E,L}) = \chi(\tau \otimes P_{V}).$$
\end{proof}

\subsection{Character table}

We now calculate the characters of the representations appearing in the statement of Theorem~\ref{thm: character count}.
The possible symmetry operations in the plane are
rotations around the origin, denoted by $c_n$, $n\in \mathbb{N}$, and reflections, denoted by  $\sigma$. 

Recall that for a  looped simple graph $G=(V,E,L)$ that is  $\Gamma$-symmetric (with respect to $\phi=(\phi_1,\phi_2)\in \textrm{Aut}(G)$), we say that a vertex $v_i\in V$ is \emph{fixed} by $\gamma\in\Gamma$ if $\phi_1(\gamma)(v_i)=v_i$. Similarly, an edge $v_iv_j\in E$ is \emph{fixed} by $\gamma\in\Gamma$ if both $v_i$ and $v_j$ are fixed by $\gamma$ or if  $\phi_1(\gamma)(v_i)=v_j$ and $\phi_1(\gamma)(v_j)=v_i$.
Similarly, a loop $l_j \in L$ is \emph{fixed} by $\gamma \in \Gamma$ if $\phi_2(l_j)=l_j$.
For groups of order $2$, we will often just say that  
an edge or a loop is \emph{fixed} if it is fixed by the non-trivial group element.
Note  that loops that are fixed by $\gamma$ correspond to lines that are unshifted by $\tau(\gamma)$ and linearly constrain points of $(G,p,q)$ that are also unshifted by $\tau(\gamma)$.
By the definition of $P_{E,L}$, in a matrix $P_{E,L}(\gamma)$ there are two possibilities for the entry of a fixed loop, namely $\pm 1$, depending on whether the normal of the linear constraint is preserved or inverted by $\tau(\gamma)$.

An edge or a loop of $G$ cannot be fixed by an element of $\Gamma$ that corresponds to a rotation $c_n$, $n\geq 3$, so we have a separate column for $c_2$ below.
The number of vertices that are fixed by the element in $\Gamma$ corresponding to the half-turn $c_{2}$, a general $n$-fold rotation $c_n$, or the reflection $\sigma$ are denoted by $v_{2}$, $v_n$ and $v_{\sigma}$, respectively.
The number of edges that are fixed by the element in $\Gamma$ corresponding to the half-turn $c_2$ and a  reflection $\sigma$ are denoted by $e_2$ and $e_\sigma$ respectively.
Finally, the number of loops that are fixed by $c_2$ %$c_n$ 
and $\sigma$ is $l_2$
and $l_{\sigma,+},l_{\sigma,-}$ respectively, where $l_{\sigma,+}$ counts linear constraints perpendicular to the mirror whose normals are preserved by the reflection, and $l_{\sigma,-}$ counts those linear constraints parallel to the mirror with normals inverted by the reflection.

%Before giving the character table, we consider the values corresponding to the fixed linear constraints in $P_{E,L}(\gamma)$ for $\sigma$ and $c_2$.
%%%$c_n$ for all $n\geq 2$.
%%%For $n\geq 3$, no lines are fixed by $c_n$
%A half turn which fixes a linear constraint would map a normal to its inverse, hence have a $-1$ entry representing that loop in $P_{E,L}(c_2)$.
%Linear constraints parallel to a mirror would have normals which are perpendicular, hence the normals would map to it's inverse having a $-1$ entry in the matrix $P_{E,L}(\sigma)$, whereas the linear constraints perpendicular have normals parallel to the mirror, so have a $1$ representing the loop in the matrix $P_{E,L}(\sigma)$.
This gives all the information necessary to complete Table \ref{tab: CT2dlin}.

\begin{table}[ht]
    \centering
    \begin{tabular}{ |c|c|c|c|c|} 
\hline
 & $\textrm{id}$ & $c_{n\geq3}$ & $c_{2}$ & $\sigma$ \\
\hline
$\chi(P_{E,L})$ & $|E|+|L|$ & 0 & $e_{2}-l_2$ & $e_{\sigma}+l_{\sigma,+}-l_{\sigma,-}$\\
\hline
$\chi(\tau \otimes P_{V})$ & $2|V|$ & $2v_n \cos(\frac{2\pi}{n})$ & $-2v_{2}$ & 0\\
\hline
\end{tabular}
    \caption{Character table for symmetry operations of the plane.}
    \label{tab: CT2dlin}
\end{table}

In the following proofs we shall use Theorem \ref{thm: character count} to draw conclusions from Table~\ref{tab: CT2dlin}.
The first colunn (for the identity element) simply recovers the result from Theorem \ref{thm:lm} that $|E| + |L| = 2|V|$ for an isostatic linearly constrained framework in the plane. The other columns provide further conditions for isostaticity in the presence of symmetry.
It is easy to see that if the counts in Corollary~\ref{lem: element count 2-dim} are satisfied for a $\tau(\Gamma)$-symmetric framework, then the corresponding counts are also satisfied for any $\tau(\Gamma')$-symmetric subframework with $\Gamma'\subseteq \Gamma$. 

\begin{cor}\label{lem: element count 2-dim}
If $(G,p,q)$ is a $\tau(\Gamma)$-symmetric isostatic framework, then the following hold:
\begin{itemize}
    \item if $c_{n}\in \tau(\Gamma)$, where $n$ is odd or $n\geq 6$ is even,  then $v_n = e_n = l_n =0$;
    \item if $c_2 \in \tau(\Gamma)$ then $v_2 = e_2 = l_2 = 0$ or $v_2 = 1, l_2 = 2$;
    \item if $c_4 \in \tau(\Gamma)$ then $v_4 = 0,1$ and $e_4 = l_4 = 0$;
    \item if $\sigma\in \tau(\Gamma)$ then $e_\sigma + l_{\sigma,+} = l_{\sigma,-}$.
\end{itemize}
\end{cor}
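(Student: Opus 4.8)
The plan is to apply Theorem~\ref{thm: character count} one column of Table~\ref{tab: CT2dlin} at a time, and to supplement the resulting scalar identities with a few elementary geometric facts together with sparsity (which is available since an isostatic framework is in particular independent).

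First I would record the geometric input. Since $p$ is injective and a rotation $c_n$ with $n\geq 2$ fixes only the origin as a point, at most one vertex of $(G,p,q)$ can be fixed by $c_n$; hence $v_n\leq 1$ for every $n\geq 2$. Likewise a rotation of order $\geq 3$ cannot fix an edge (if it swapped the two endpoints, its square would be a nontrivial rotation fixing both of them, forcing both to lie at the origin and contradicting injectivity), and by our standing convention it cannot fix a loop, so $e_n=l_n=0$ whenever $n\geq 3$.

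Next I would treat the rotation columns. Equating the $c_{n\geq 3}$ entries gives $2v_n\cos\!\big(\tfrac{2\pi}{n}\big)=0$. Among integers $n\geq 3$ one has $\cos(2\pi/n)=0$ only for $n=4$, so for all admissible $n$ (odd, or $n\geq 6$ even) we get $v_n=0$, which together with $e_n=l_n=0$ yields the first bullet; the $n=4$ case gives the third bullet, where the bound $v_4\in\{0,1\}$ and $e_4=l_4=0$ come from the geometric step (the character equation being vacuous there). For the half-turn, equating the $c_2$ entries gives $e_2-l_2=-2v_2$, i.e.\ $l_2=e_2+2v_2$. If $v_2=0$ there is no fixed vertex, hence no fixed loop, so $l_2=0$ and then $e_2=0$. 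If $v_2=1$, then because $\tau(c_2)=-I$ every loop at the unique fixed vertex satisfies $\tau(c_2)q_j=-q_j$ and is therefore counted by $l_2$; sparsity bounds the number of loops at a single vertex by $2$, so $l_2\leq 2$, and combined with $l_2=e_2+2\geq 2$ this forces $l_2=2$ and $e_2=0$, giving the second bullet. Finally, equating the $\sigma$ entries gives $e_\sigma+l_{\sigma,+}-l_{\sigma,-}=0$, which is the fourth bullet.

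Once Theorem~\ref{thm: character count} and Table~\ref{tab: CT2dlin} are in hand, the argument is essentially bookkeeping; the only points that require a little care are the geometric exclusions ($v_n\leq 1$ and the absence of rotation-fixed edges or loops for $n\geq 3$) and the sparsity bound $l_2\leq 2$ in the half-turn case, so I do not anticipate a genuine obstacle.
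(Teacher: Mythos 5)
Your proposal is correct and follows essentially the same route as the paper: equate the columns of Table~\ref{tab: CT2dlin} via Theorem~\ref{thm: character count}, use injectivity of $p$ to bound the number of rotation-fixed vertices, and use the sparsity count at the single fixed vertex to get $l_2\leq 2$ in the half-turn case. The only differences are cosmetic (you spell out why a rotation of order at least $3$ fixes no edge, and note in passing that loops at the $c_2$-fixed vertex have inverted normals, neither of which changes the argument).
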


\begin{proof}
Recall from Theorem~\ref{thm: character count} that if 
$(G,p,q)$ is a $\tau(\Gamma)$-symmetric isostatic framework, then
$\chi(P_{E,L}) = \chi( \tau \otimes P_{V})$.
We now consider each of the columns in Table~\ref{tab: CT2dlin}.
From the second column, we obtain $v_n \cos(\frac{2\pi}{n}) =0$, hence either $v_n =0$ or $\cos(\frac{2\pi}{n}) = 0$.
The latter is only possible for a positive integer $n$ when $n=4$.
Hence, for any symmetry group containing $c_4$, we have $v_4=0$ or $v_4=1$, since $p$ is injective.
For all other $n\geq 3$, there are no fixed vertices, edges or loops.

In the second column, the equation $\chi(P_{E,L}) = \chi( \tau \otimes P_{V})$ can only hold for $c_{2}\in \tau(\Gamma)$ if $2v_2 = l_2 - e_2$.
Again recalling that for rotations we may have at most one fixed vertex, this implies that either $v_2=0$ then necessarily we have $l_2=0$ and hence $e_2=0$, or  $v_2=1$ then $e_2=l_2-2$.
Considering the condition given by $\textrm{id}$ at the single fixed vertex, $l_2 \leq 2$. This gives when $v_2 = 1$, $l_2 = 2$ and $e_2 = 0$.
Any group containing $c_4$ necessarily contains $c_2$, and the two conditions above are not mutually exclusive.
Instead, a fixed point in a $C_4$-symmetric framework is constrained by two lines which must be perpendicular.

For the reflection, from the table, we immediately have $e_\sigma + l_{\sigma,+} = l_{\sigma,-}$.
\end{proof}

Note that from these symmetry operations \emph{any} symmetry group in the plane is possible. This contrasts with the situation for bar-joint frameworks in the plane where isostatic symmetric frameworks are only possible for a small number of symmetry groups, see \cite{cfgsw,schulze,BS4} for details.

We are now able to use Corollary \ref{lem: element count 2-dim}  to summarize the conclusions about $\tau(\Gamma)$-symmetric isostatic linearly constrained frameworks for each possible symmetry group $\tau(\Gamma)$.
We say that $(G,\phi)$ is \emph{$\tau(\Gamma)$-tight} if it is tight, $\Gamma$-symmetric and satisfies the relevant constraints in Table~\ref{table:fixed}.

\begin{center}
\begin{table}[ht]
    \begin{tabular}{|c|c|}
    \hline
    $\tau(\Gamma)$ & $\text{Number of edges, loops and vertices fixed by symmetry operations}$ \\
    \hline
    $C_s$ & $e_{\sigma} + l_{\sigma,+} = l_{\sigma,-}$\\ 
    $C_n$, $n \neq 2,4$ & $v_n, e_n, l_n =0$\\
    $C_2$ & $v_2, e_2, l_2 =0$ or $v_2 = 1, e_2 = 0, l_2 =2$\\ 
    $C_4$ & $v_2, v_4, e_2, e_4, l_2, l_4 =0$ or $v_2, v_4 = 1, e_2, e_4 =0, l_2 =2, l_4 =0$\\ 
    $C_{2v}$ & $e_{\sigma} + l_{\sigma,+} = l_{\sigma,-}$, $(v_2, e_2, l_2 =0$ or $v_2 = 1, e_2 = 0, l_2 =2)$\\
    $C_{4v}$ & $e_{\sigma} + l_{\sigma,+} = l_{\sigma,-}$, $(v_2, v_4, e_2, e_4, l_2, l_4 =0$ or $v_2, v_4 = 1, e_2, e_4 =0, l_2 =2, l_4 =0)$\\
    $C_{nv}$, $n \neq 2,4$ & $e_{\sigma} + l_{\sigma,+} = l_{\sigma,-}$, $v_n, e_n, l_n =0$\\
    \hline
    \end{tabular}
    \caption{Fixed edge/loop/vertex counts for symmetry operations in the plane.}
    \label{table:fixed}
    \end{table}
\end{center}

\begin{thm}\label{thm: fixed 2dim lin}
Let $(G,p,q)$ be an isostatic $\tau(\Gamma)$-symmetric framework on the plane. Then $G$ is $\tau(\Gamma)$-tight.
\end{thm}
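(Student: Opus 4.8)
The plan is to assemble Theorem~\ref{thm: fixed 2dim lin} directly from the two ingredients already developed: Theorem~\ref{thm:lm}, which tells us that an isostatic linearly constrained framework is tight (as a looped simple graph), and Corollary~\ref{lem: element count 2-dim}, which records exactly the fixed-element constraints forced by the character equation $\chi(P_{E,L}) = \chi(\tau \otimes P_V)$ for each symmetry operation that can occur. Since $(G,p,q)$ is isostatic and $\tau(\Gamma)$-symmetric, it is in particular isostatic in the non-symmetric sense, so $G$ is tight by Theorem~\ref{thm:lm}. It is also, by definition, $\Gamma$-symmetric. Thus it only remains to verify that the relevant row of Table~\ref{table:fixed} is satisfied, for whichever group $\tau(\Gamma)$ we are dealing with.

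The main step is therefore a case analysis over the possible symmetry groups $\tau(\Gamma)$ in the plane, namely $C_s$, $C_n$, and $C_{nv}$. For each such group I would list the symmetry operations it contains and then simply quote the corresponding bullet of Corollary~\ref{lem: element count 2-dim}: for $C_s$ only the reflection bullet applies, giving $e_\sigma + l_{\sigma,+} = l_{\sigma,-}$; for $C_n$ with $n\neq 2,4$ the rotation bullet gives $v_n=e_n=l_n=0$; for $C_2$ the half-turn bullet gives $v_2=e_2=l_2=0$ or $v_2=1, e_2=0, l_2=2$; for $C_4$ we combine the $c_4$ bullet ($v_4\in\{0,1\}$, $e_4=l_4=0$) with the $c_2$ bullet, using the remark in the proof of Corollary~\ref{lem: element count 2-dim} that in a $C_4$-symmetric framework the fixed point is pinned by two perpendicular lines, to conclude the listed alternatives; and for the dihedral groups $C_{nv}$ one takes the conjunction of the reflection condition with the appropriate rotation condition, since $C_{nv}$ contains both $c_n$ and $\sigma$. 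In each case this is precisely the entry of Table~\ref{table:fixed}, so $(G,\phi)$ is $\tau(\Gamma)$-tight by definition.

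I do not anticipate a genuine obstacle here: the theorem is essentially a bookkeeping consolidation of Theorem~\ref{thm:lm} and Corollary~\ref{lem: element count 2-dim}, and the only point requiring a word of care is the $C_4$ (and hence $C_{4v}$) case, where one must note that $c_4\in\tau(\Gamma)$ forces $c_2\in\tau(\Gamma)$ and reconcile the two bullets of the corollary — observing that a vertex fixed by $c_4$ is fixed by $c_2$, that the $c_4$-bullet already forbids fixed edges and $c_4$-fixed loops, and that the two $c_2$-fixed loops at such a vertex are exactly the two perpendicular linear constraints invariant under the $4$-fold rotation. The same remark, combined with the reflection condition, handles $C_{4v}$. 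Everything else is an immediate citation.
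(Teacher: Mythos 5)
Your proposal is correct and follows essentially the same route as the paper: tightness from Theorem~\ref{thm:lm}, then the fixed-element counts for each group read off from Corollary~\ref{lem: element count 2-dim}, with the $C_4$ and dihedral cases handled by combining the relevant bullets (the paper phrases this as checking that the combined constraints are not mutually exclusive, which is the same bookkeeping). The only minor point worth adding in the $C_4$ case is that $v_2=1$ forces $v_4=1$ as well, since the $c_2$-fixed vertex sits at the origin and $p$ is injective.
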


\begin{proof} 
Clearly $G$ is $\Gamma$-symmetric and tight by Theorem \ref{thm:lm}. That the constraints in Table~\ref{table:fixed} are satisfied follows immediately from Corollary \ref{lem: element count 2-dim} for the groups $C_s$ and $C_n$.
For dihedral groups we check if any of the constraints from Corollary \ref{lem: element count 2-dim} are mutually exclusive.
A vertex fixed by $c_2$ will be a point at the origin, and hence lie on any mirror.
The two $c_2$-fixed loops incident to this vertex are either images of each other under $\sigma$ or they are both fixed, i.e. for the corresponding linear constraints, one must lie on the mirror and the other perpendicular to the mirror.
With an additional $c_4$ symmetry operation, these linear constraints must be perpendicular.
Either of the two cases above is still possible, with the mirrors bisecting the angle between the linear constraints or one line lying on each mirror.
For $C_{nv}$ there will be no point at the origin.
Any vertices, edges or loops fixed by a mirror will have an orbit of $c_n$ symmetric copies, all fixed by their own respective mirrors.
None of this contradicts the $c_n$ counts.
\end{proof}

In the remainder of the article we consider the converse problem. Given a $\tau(\Gamma)$-tight graph and a $\tau(\Gamma)$-generic realisation, is it isostatic in the plane? For this combinatorial problem, we restrict attention to cyclic groups.

\section{Rigidity preserving operations}\label{sec:ops}

Given a $\tau(\Gamma)$-symmetric isostatic linearly constrained framework in $\mathbb{R}^2$, we next introduce several construction operations and prove that their application results in larger $\tau(\Gamma)$-symmetric isostatic linearly constrained frameworks in $\mathbb{R}^2$. Moreover we use this to show that a certain infinite family of $C_n$-symmetric linearly constrained frameworks are isostatic.
These construction operations are symmetry-adapted looped Henneberg-type graph operations. The operations are depicted in Figures \ref{fig:0ext} and \ref{fig: 1 ext} for specific symmetry groups.

We will work with an arbitrary finite group $\Gamma = \{\textrm{id} = \gamma_{0}, \gamma_{1}, \dots, \gamma_{t-1}\}$ and we will write $\gamma_{k}v$ instead of $\phi(\gamma_{k})(v)$ and often $\gamma_{k}(x,y)$ or $(x^{(k)},y^{(k)})$ for $\tau(\gamma_{k})(p(v))$ where $p(v) = (x,y)$.
For a group of order two, it will be common to write $v' = \gamma v$ for $\gamma \in \Gamma \setminus\{\textrm{id}\}$.

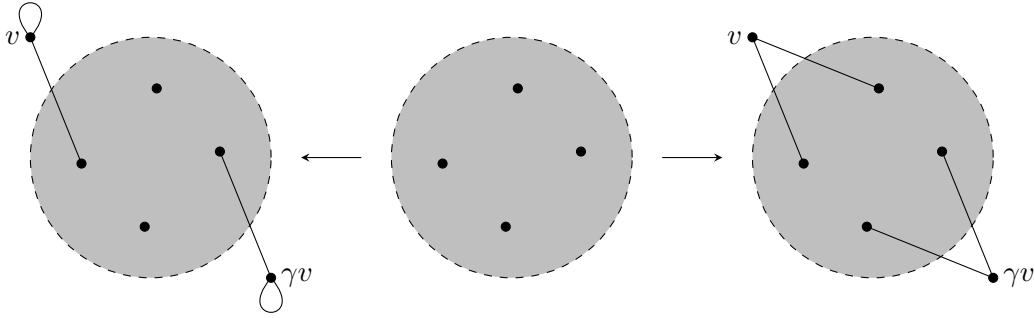
\begin{figure}[ht]
    \begin{center}
    \begin{tikzpicture}
  [scale=.4]
  
  \coordinate (n1) at (0,8);
  \coordinate (n2) at (1.7,3.8);
  \coordinate (n3) at (2.5,5.5);
  \coordinate (n4) at (4.2,6.3);
  \coordinate (n5) at (8,0);
  \coordinate (n6) at (3.8,1.7);
  \coordinate (n7) at (5.5,2.5);
  \coordinate (n8) at (6.3,4.2);
  \coordinate (n9) at (4,4);
  
 \draw[dashed, fill=lightgray] (n9) circle (4cm); 
 \draw[fill=black] (n2) circle (0.15cm);
 \draw[fill=black] (n4) circle (0.15cm);
 \draw[fill=black] (n6) circle (0.15cm);
 \draw[fill=black] (n8) circle (0.15cm);

  \coordinate (n11) at (-12,8);
  \coordinate (n12) at (-10.3,3.8);
  \coordinate (n13) at (-9.5,5.5);
  \coordinate (n14) at (-7.8,6.3);
  \coordinate (n15) at (-4,0);
  \coordinate (n16) at (-8.2,1.7);
  \coordinate (n17) at (-6.5,2.5);
  \coordinate (n18) at (-5.7,4.2);
  \coordinate (n19) at (-8,4);
  
 \draw[dashed, fill=lightgray] (n19) circle (4cm); 
 \draw[fill=black] (n11) circle (0.15cm)
	    node[left] {$v$};
 \draw[fill=black] (n12) circle (0.15cm);
 \draw[fill=black] (n14) circle (0.15cm);
 \draw[fill=black] (n15) circle (0.15cm)
	    node[right] {$\gamma v$};
 \draw[fill=black] (n16) circle (0.15cm);
 \draw[fill=black] (n18) circle (0.15cm);
 
   \foreach \from/\to in {n11/n12,n15/n18} 
    \draw (\from) -- (\to);

\draw[scale = 4] (-3,2)  to[in=50,out=130,loop] (0,0);
\draw[scale = 4] (-1,0)  to[in=-50,out=-130,loop] (0,0);
 
  \coordinate (n21) at (12,8);
  \coordinate (n22) at (13.7,3.8);
  \coordinate (n23) at (14.5,5.5);
  \coordinate (n24) at (16.2,6.3);
  \coordinate (n25) at (20,0);
  \coordinate (n26) at (15.8,1.7);
  \coordinate (n27) at (17.5,2.5);
  \coordinate (n28) at (18.3,4.2);
  \coordinate (n29) at (16,4);
  
 \draw[dashed, fill=lightgray] (n29) circle (4cm); 
 \draw[fill=black] (n21) circle (0.15cm)
	    node[left] {$v$};
 \draw[fill=black] (n22) circle (0.15cm);
 \draw[fill=black] (n24) circle (0.15cm);
 \draw[fill=black] (n25) circle (0.15cm)
	    node[right] {$\gamma v$};
 \draw[fill=black] (n26) circle (0.15cm);
 \draw[fill=black] (n28) circle (0.15cm);
	 
   \foreach \from/\to in {n21/n22,n21/n24,n25/n26,n25/n28} 
    \draw (\from) -- (\to);

\draw[-stealth] (9,4) -- (11,4);
\draw[-stealth] (-1,4) -- (-3,4);

\end{tikzpicture}
    \caption{$C_n$-symmetric 0-extensions adding new vertices $v$ and $\gamma v$ in each case; $n=2$ shown.}
    \label{fig:0ext}
    \end{center}
\end{figure}

\begin{figure}[ht]
    \begin{center}
    \begin{tikzpicture}
  [scale=.4]
  
  \coordinate (n100) at (4,4);
  \coordinate (n101) at (6.5,4);
  \coordinate (n102) at (6.165,5.25);
  \coordinate (n103) at (5.25,6.165);
  \coordinate (n104) at (2.75,6.165);
  \coordinate (n105) at (1.835,5.25);
  \coordinate (n106) at (1.5,4);
  \coordinate (n107) at (2.75,1.835);
  \coordinate (n108) at (4,1.5);
  \coordinate (n109) at (5.25,1.835);
  
 \draw[dashed, fill=lightgray] (n100) circle (4cm); 
 \draw[fill=black] (n101) circle (0.15cm);
 \draw[fill=black] (n102) circle (0.15cm);
 \draw[fill=black] (n103) circle (0.15cm);
 \draw[fill=black] (n104) circle (0.15cm);
 \draw[fill=black] (n105) circle (0.15cm);
 \draw[fill=black] (n106) circle (0.15cm);
 \draw[fill=black] (n107) circle (0.15cm);
 \draw[fill=black] (n108) circle (0.15cm);
 \draw[fill=black] (n109) circle (0.15cm);
 
\draw[scale = 4] (1.625,1)  to[in=-40,out=40,loop] (0,0);
\draw[scale = 4] (2.75/4,6.165/4)  to[in=80,out=160,loop] (0,0);
\draw[scale = 4] (2.75/4,1.835/4)  to[in=-160,out=-80,loop] (0,0);
	    
  \coordinate (n0) at (16,4);
  \coordinate (n1) at (18.5,4);
  \coordinate (n2) at (18.165,5.25);
  \coordinate (n3) at (17.25,6.165);
  \coordinate (n4) at (14.75,6.165);
  \coordinate (n5) at (13.835,5.25);
  \coordinate (n6) at (13.5,4);
  \coordinate (n7) at (14.75,1.835);
  \coordinate (n8) at (16,1.5);
  \coordinate (n9) at (17.25,1.835);
  \coordinate (n10) at (20.899,6.828);
  \coordinate (n11) at (11.101,6.828);
  \coordinate (n12) at (16,-1.657);
  
 \draw[dashed, fill=lightgray] (n0) circle (4cm); 
 \draw[fill=black] (n1) circle (0.15cm);
 \draw[fill=black] (n2) circle (0.15cm);
 \draw[fill=black] (n3) circle (0.15cm);
 \draw[fill=black] (n4) circle (0.15cm);
 \draw[fill=black] (n5) circle (0.15cm);
 \draw[fill=black] (n6) circle (0.15cm);
 \draw[fill=black] (n7) circle (0.15cm);
 \draw[fill=black] (n8) circle (0.15cm);
 \draw[fill=black] (n9) circle (0.15cm);
 \draw[fill=black] (n10) circle (0.15cm)
	    node[below right] {$v$};
 \draw[fill=black] (n11) circle (0.15cm)
	    node[below left] {$\gamma v$};
 \draw[fill=black] (n12) circle (0.15cm)
	    node[left] {$\gamma^{2} v$};
 
\draw[scale = 4] (8.899/4+3,6.828/4)  to[in=-10,out=70,loop] (0,0);
\draw[scale = 4] (-12.899/4+6,6.828/4)  to[in=110,out=190,loop] (0,0);
\draw[scale = 4] (4,-1.657/4)  to[in=-130,out=-50,loop] (0,0);

   \foreach \from/\to in {n1/n10,n3/n10,n4/n11,n6/n11,n7/n12,n9/n12} 
    \draw (\from) -- (\to);
 
  \coordinate (n200) at (16,16);
  \coordinate (n201) at (18.5,16);
  \coordinate (n202) at (18.165,17.25);
  \coordinate (n203) at (17.25,18.165);
  \coordinate (n204) at (14.75,18.165);
  \coordinate (n205) at (13.835,17.25);
  \coordinate (n206) at (13.5,16);
  \coordinate (n207) at (14.75,13.835);
  \coordinate (n208) at (16,13.5);
  \coordinate (n209) at (17.25,13.835);
  \coordinate (n210) at (20.899,18.828);
  \coordinate (n211) at (-12.899+24,18.828);
  \coordinate (n212) at (16,-1.657+12);

 \draw[dashed, fill=lightgray] (n200) circle (4cm); 
 \draw[fill=black] (n201) circle (0.15cm);
 \draw[fill=black] (n202) circle (0.15cm);
 \draw[fill=black] (n203) circle (0.15cm);
 \draw[fill=black] (n204) circle (0.15cm);
 \draw[fill=black] (n205) circle (0.15cm);
 \draw[fill=black] (n206) circle (0.15cm);
 \draw[fill=black] (n207) circle (0.15cm);
 \draw[fill=black] (n208) circle (0.15cm);
 \draw[fill=black] (n209) circle (0.15cm);
 \draw[fill=black] (n210) circle (0.15cm)
	    node[right] {$v$};
 \draw[fill=black] (n211) circle (0.15cm)
	    node[left] {$\gamma v$};
 \draw[fill=black] (n212) circle (0.15cm)
	    node[left] {$\gamma^{2} v$};

   \foreach \from/\to in {n201/n210,n202/n210,n203/n210,n204/n211,n205/n211,n206/n211,n207/n212,n208/n212,n209/n212} 
    \draw (\from) -- (\to);
    
\draw[-stealth] (9,4) -- (11,4);
\draw[-stealth] (9,16) -- (11,16);

  \coordinate (n300) at (4,16);
  \coordinate (n301) at (6.5,16);
  \coordinate (n302) at (6.165,17.25);
  \coordinate (n303) at (5.25,18.165);
  \coordinate (n304) at (2.75,18.165);
  \coordinate (n305) at (1.835,17.25);
  \coordinate (n306) at (1.5,16);
  \coordinate (n307) at (2.75,13.835);
  \coordinate (n308) at (4,13.5);
  \coordinate (n309) at (5.25,13.835);
  
 \draw[dashed, fill=lightgray] (n300) circle (4cm); 
 \draw[fill=black] (n301) circle (0.15cm);
 \draw[fill=black] (n302) circle (0.15cm);
 \draw[fill=black] (n303) circle (0.15cm);
 \draw[fill=black] (n304) circle (0.15cm);
 \draw[fill=black] (n305) circle (0.15cm);
 \draw[fill=black] (n306) circle (0.15cm);
 \draw[fill=black] (n307) circle (0.15cm);
 \draw[fill=black] (n308) circle (0.15cm);
 \draw[fill=black] (n309) circle (0.15cm);

   \foreach \from/\to in {n301/n302,n304/n305,n307/n308} 
    \draw (\from) -- (\to);
	    
\end{tikzpicture}
    \caption{$C_n$-symmetric 1-extensions adding new vertices $v$, $\gamma v$ and $\gamma^{2} v$ in each case; $n=3$ shown.}
    \label{fig: 1 ext}
    \end{center}
\end{figure}
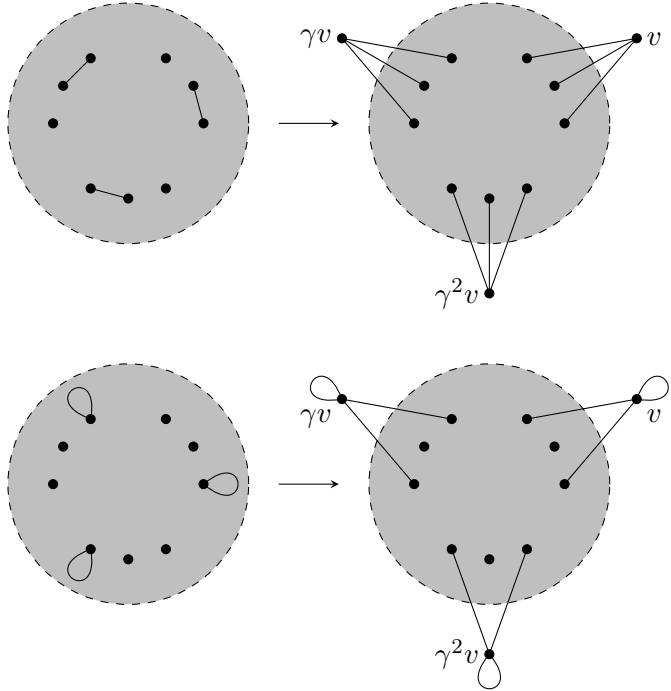

Let $G=(V,E, L)$ be a $\Gamma$-symmetric looped simple graph for a group $\Gamma$ of order $t$.
Then a \emph{symmetrised 0-extension} creates a new $\Gamma$-symmetric looped simple graph $G^+=(V^+,E^+, L^+)$ by adding the $t$ vertices $\{v, \gamma v, \dots, \gamma_{t-1}v\}$ with either: $v$ adjacent to $v_{i},v_{j}$, and for each $k \in \{1,\dots,t-1\}$, $\gamma_{k}v$ adjacent to $\gamma_{k}v_{i},\gamma_{k}v_{j}$; or $v$ adjacent to $v_{i}$ and incident to the loop $(v,v)$, and for each $k \in \{1,\dots,t-1\}$, $\gamma_{k}v$ adjacent to $\gamma_{k}v_{i}$ and incident to $(\gamma_k v_i, \gamma_k v_i)$.

Let $e_i=x_iy_i \in E$, $i=0\leq i \leq t-1$ be an edge orbit of $G$ of size $t$ under the action of $\Gamma$.
Further let $z_0\neq x_0,y_0$ and let $z_i=\gamma_i z_0$ for $i=1,\ldots, t-1$.
A \emph{symmetrised 1-extension} creates a new $\Gamma$-symmetric looped simple graph by adding $t$ vertices $\{v, \gamma v, \dots, \gamma_{t-1}v\}$ and deleting all the edges $e_i$ from $G$, and  with $v$ adjacent to $x_0,y_0$ and $z_0$, and $\gamma_iv$ adjacent to $x_{i},y_{i}$ and $z_{i}$ for $i=1,\ldots, t-1$.
Alternatively, let $l_i = x_i x_i \in L$ for $i=0\leq i \leq t-1$ be a loop orbit of $G$ of size $t$ under the action of $\Gamma$.
A \emph{symmetrised looped 1-extension} creates a new $\Gamma$-symmetric looped simple graph by adding $t$ vertices $\{v, \gamma v, \dots, \gamma_{t-1}v\}$ and $t$ loops $l_i^*= (\gamma _i v, \gamma_i v)$ for $i=0\leq i \leq t-1$ and deleting all the loops $l_i$ from $G$, and with $v$ adjacent to $x_0$ and $y_0$, and $\gamma_iv$ adjacent to $x_{i}$ and $y_{i}$ for $i=1,\ldots, t-1$.

%A \emph{symmetrised vertex-to-$C_{4}$} operation at the vertices $w, \gamma_{1}w, \dots, \gamma_{t-1}w$, creates a new $\Gamma$-symmetric graph $G^+=(V^+,E^+)$ with $V^+ = (V,u,\dots, \gamma_{t-1}u)$.
%The edge set changes such that if $w$ is adjacent to $v_1,\dots, v_r$ in $G$, $v_{1}, v_{2}$ are adjacent to both $w$ and the new vertex $u$, with $v_3,\dots,v_r$ adjacent to one of $w$ or $u$ in $E^+$. Similarly $v_{1}^{(k)}, v_{2}^{(k)}$ are adjacent to both $w^{(k)}$ and $u^{(k)}$ and $v_3^{(k)},\dots,v_r^{(k)}$ are adjacent to one of $w^{(k)}$ or $u^{(k)}$ in $G^{+}$.

\begin{lem}\label{0-ext rigid}
Suppose $G$ is $\Gamma$-symmetric. Let $G^{+}$ be obtained from $G$ by a symmetrised $0$-extension. If $(G,p,q)$ is $\tau(\Gamma)$-isostatic in $\mathbb{R}^2$, then for appropriate maps $p^+, q^+$, $(G^{+},p^+, q^+)$ is $\tau(\Gamma)$-isostatic in $\mathbb{R}^2$.
\end{lem}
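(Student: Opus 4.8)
The plan is a symmetry-adapted version of the classical fact that a $0$-extension (vertex addition) preserves generic isostaticity. First I would record the counts. A symmetrised $0$-extension adds $t$ vertices and, in both cases, $2t$ new constraints (either $2t$ edges, or $t$ edges and $t$ loops), so $|E^+|+|L^+|=|E|+|L|+2t$; since $(G,p,q)$ is isostatic it is tight by Theorem~\ref{thm:lm}, so $|E|+|L|=2|V|$ and hence $|E^+|+|L^+|=2|V|+2t=2|V^+|$. Thus $R(G^+,p^+,q^+)$ will be a square matrix of size $2|V^+|$, and it suffices to choose a $\Gamma$-symmetric framework $(G^+,p^+,q^+)$ whose rigidity matrix is nonsingular, equivalently one that has only the zero infinitesimal motion.

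I would then fix the new data symmetrically: put $p^+|_V=p$ and $q^+|_L=q$, choose the position $p^+(v)$ of the new vertex and, in the looped case, the new normal $q^+(l_0^*)$, and propagate them by $p^+(\gamma_k v)=\tau(\gamma_k)p^+(v)$ and $q^+(l_k^*)=\tau(\gamma_k)q^+(l_0^*)$. Because a $0$-extension adds a \emph{free} orbit $\{v,\gamma_1 v,\dots,\gamma_{t-1}v\}$, no new loop is fixed by a non-trivial element, so none of the sign subtleties in the definition of $P_L^*$ arise. The only non-degeneracy I need is: in the edge case the three points $p^+(v),p_i,p_j$ are not collinear; in the looped case $p^+(v)-p_i$ and $q^+(l_0^*)$ are linearly independent. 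Each condition excludes only a proper algebraic subset of the possible choices, and avoiding in addition $p(V)$ and the (finitely many) fixed loci of the non-trivial $\tau(\gamma_k)$ keeps $p^+$ injective, so a valid symmetric choice exists.

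The heart of the argument is then the standard ``pin the new vertices one at a time'' computation. Let $\dot p^+$ be any infinitesimal motion of $(G^+,p^+,q^+)$. Every edge and loop of $G$ survives in $G^+$, and $p^+,q^+$ restrict to $p,q$, so $\dot p:=\dot p^+|_V$ satisfies \eqref{eqn1} and \eqref{eqn2} for $(G,p,q)$; by infinitesimal rigidity of the isostatic framework $(G,p,q)$ we get $\dot p=0$. Now take a new vertex $u=\gamma_k v$ and its incident neighbours in $V$, namely $\gamma_k v_i$ and $\gamma_k v_j$ (in the looped case just $\gamma_k v_i$, together with the new loop at $u$). Since $\dot p^+$ vanishes on $V$, the two constraints at $u$ become $(p^+(u)-p(\gamma_k v_i))\cdot\dot p^+(u)=0$ together with either $(p^+(u)-p(\gamma_k v_j))\cdot\dot p^+(u)=0$ or $q^+(l_k^*)\cdot\dot p^+(u)=0$. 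Using $p(\gamma_k w)=\tau(\gamma_k)p(w)$ and $q^+(l_k^*)=\tau(\gamma_k)q^+(l_0^*)$, this ordered pair of vectors equals $\tau(\gamma_k)$ applied to the corresponding pair at $v$, and since $\tau(\gamma_k)$ is a linear isomorphism that pair is a basis of $\R^2$ by the chosen non-degeneracy condition. Hence $\dot p^+(u)=0$ for every $k$, so $\dot p^+=0$; the square rigidity matrix of size $2|V^+|$ is therefore nonsingular, $(G^+,p^+,q^+)$ is infinitesimally rigid and independent, i.e.\ isostatic, and it is $\Gamma$-symmetric by construction, so $G^+$ is $\tau(\Gamma)$-isostatic.

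There is no serious obstacle here; the one point deserving care is the last reduction, namely that the single generic choice of $(p^+(v),q^+(l_0^*))$ pins the \emph{whole} orbit. This is exactly where symmetry is used: the collinearity (resp.\ dependence) condition at $\gamma_k v$ is the image under the invertible map $\tau(\gamma_k)$ of the condition at $v$, hence equivalent to it, so a single condition suffices for all $t$ new vertices. It is also worth flagging explicitly that the free-orbit hypothesis implicit in the definition is what prevents a new vertex from being fixed by a rotation or reflection — such a vertex would reintroduce sign constraints on $q^+$ and confine $\dot p^+(v)$ to a line rather than to $\{0\}$, and would instead require a separate fixed-vertex extension move.
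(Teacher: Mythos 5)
Your proposal is correct and is essentially the paper's own argument: the same non-degeneracy conditions (non-collinearity of $p^+(v),p_i,p_j$ in the edge case, independence of $p^+(v)-p_i$ and the new normal in the looped case), the same use of the invertibility of $\tau(\gamma_k)$ to transfer that single condition to every member of the new orbit, and the same edge/loop count to conclude isostaticity. The only cosmetic difference is that you argue on the kernel (restrict the motion to $V$, then pin the new vertices via invertible $2\times 2$ systems), whereas the paper phrases the identical computation as row-independence of a block-lower-triangular rigidity matrix with invertible $2\times 2$ diagonal blocks.
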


\begin{proof}
There are two cases to consider here for the two variants of 0-extensions.
We first consider a new orbit of vertices adjacent to two vertices.
Write $G^{+} = G + \{v,\dots, \gamma_{t-1}v\}$, and let $v \in V^{+}$ be adjacent to $v_{1},v_{2}$, and for each $k \in \{1,\dots,t-1\}$, $\gamma_{k}v$ adjacent to $\gamma_{k}v_{1},\gamma_{k}v_{2}$.
Define $p^{+}: V^{+} \to \R^{2}$ by $p^{+}(z)=p(z)$ for all $z\in V$, $p^{+}(v) = (x,y)$, and $p^{+}(\gamma_{k}v) = (x^{(k)},y^{(k)})$. Write $p(v_{1}) = (x_{1},y_{1})$, $p(v_{2}) = (x_{2},y_{2})$. Then,
\[ R(G^{+},p^{+},q) =
\left[ \begin{tabular}{ c c c c c c c }
    $R(G,p,q)$ & & & & & & \\
    & $x - x_{1}$ & $y - y_{1}$ & & & &  \\
    *& $x - x_{2}$ & $y - y_{2}$ & & $\mathbf{0}$ & & \\
    & & & $\ddots$ & & & \\
    & & & &  $x^{(k)} - x_{1}^{(k)}$ & $y^{(k)} - y_{1}^{(k)}$ & \\
    * & $\mathbf{0}$ & & & $x^{(k)} - x_{2}^{(k)}$ & $y^{(k)} - y_{2}^{(k)}$ & \\
    & & & & & & $\ddots$
    \end{tabular} \right], \]
and hence the fact that  $R(G^{+},p^{+},q)$ has linearly independent rows will follow once each $2 \times 2$ submatrix indicated above is shown to be invertible.
Choose $(x,y)$ so that $p^+(v), p^+(v_1), p^+(v_2)$ are not collinear, that is $(x,y) \neq (x_1,y_1)+\lambda(x_2-x_1,y_2-y_1)$ for any $\lambda \in \R$, which is exactly the requirement for the first submatrix to be invertible.
Since each $\tau(\gamma_{k})$ is an isometry, all of the other $t-1$ remaining submatrices are also invertible, and so $\textrm{rank } R(G^{+},p^{+},q) \geq \textrm{rank }R(G,p,q) + 2t$.
Hence, if $G$ is $\tau(\Gamma)$-independent so is $G^{+}$.

Alternatively, consider a new orbit of vertices with one loop incident, adjacent to one vertex.
Write $G^+= G + \{v,\dots, \gamma_{t-1}v\}$ and let $v \in V^{+}$ be adjacent to $v_{1}$ and incident to the loop $l' = vv$, and for each $k \in \{1,\dots,t-1\}$, $\gamma_{k}v$ adjacent to $\gamma_{k}v_{1}$ and incident to $\gamma_k l$.
Define $p^+: V^+ \to \R^{2}$ by $p^+(z)=p(z)$ for all $z\in V$, $p^+(v) = (x,y)$, and $p^+(\gamma_{k}v) = (x^{(k)},y^{(k)})$ and $q^+: L^+ \to \R^{2}$ by $q^+=q(l)$ for all $l\in L$, $q^+(l') = (u,w)$, and $q^+(\gamma_{k}l') = (u^{(k)},w^{(k)})$. Write $p(v_{1}) = (x_{1},y_{1})$. Then,
\[ R(G^+,p^+,q^+) =
\left[ \begin{tabular}{ c c c c c c c }
    $R(G,p,q)$ & & & & & & \\
    & $x - x_{1}$ & $y - y_{1}$ & & & &  \\
    *& $u$ & $w$ & & $\mathbf{0}$ & & \\
    & & & $\ddots$ & & & \\
    & & & &  $x^{(k)} - x_{1}^{(k)}$ & $y^{(k)} - y_{1}^{(k)}$ & \\
    * & $\mathbf{0}$ & & & $u^{(k)}$ & $w^{(k)}$ & \\
    & & & & & & $\ddots$
    \end{tabular} \right], \]
and hence the fact that  $R(G^+,p^+,q^+)$ has linearly independent rows will follow once each $2 \times 2$ submatrix indicated above is shown to be invertible.
Choose $p^+(v), q^+(l')$ so that $(u,w) \neq \lambda(x-x_1,y-y_1)$ for any $\lambda \in \R$ (equality in the equation corresponds to the linear constraint being perpendicular to the edge $vv_1$), hence the first submatrix is invertible.
Since each $\tau(\gamma_{k})$ is an isometry, all of the other $t-1$ remaining submatrices are also invertible, and so $\textrm{rank } R(G^+,p^+,q^+) \geq \textrm{rank }R(G,p,q) + 2t$.
Hence, if $G$ is $\tau(\Gamma)$-independent so is $G^+$.
\end{proof}

\begin{rem}\label{rem: Cs fixed 0-ext}
For a $C_s$-symmetric graph $G$, let $G^+ = G + {v}$ be obtained by a $0$-extension with a vertex $v$ fixed by $\sigma$.
Then by letting $y_j = -y_i$, the relevant $2 \times 2$ matrix (see the proof above) is still invertible, hence if $G$ is $C_s$-independent, then $G^{+}$ is $C_s$-independent.
\end{rem}

\begin{lem}\label{1-ext rigid}
Let $G$ be a $\Gamma$-symmetric graph, and $G^{+}$ be obtained from $G$ by a symmetrised $1$-extension.
If $G$ is $\tau(\Gamma)$-isostatic, then $G^{+}$ is $\tau(\Gamma)$-isostatic.
\end{lem}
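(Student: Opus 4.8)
The plan is to exhibit a single $\tau(\Gamma)$-symmetric realisation of $G^{+}$ whose rigidity matrix is square and invertible; since the $\tau(\Gamma)$-generic realisations form an open dense subset of all $\tau(\Gamma)$-symmetric realisations, this is enough to conclude that $G^{+}$ is $\tau(\Gamma)$-isostatic. First I would check the count: a symmetrised $1$-extension deletes the $t$-element edge orbit $\{e_{0},\dots,e_{t-1}\}$, adds the $t$ vertices $\{v,\gamma_{1}v,\dots,\gamma_{t-1}v\}$, and adds the $3t$ edges joining $\gamma_{i}v$ to $x_{i},y_{i},z_{i}$, so $|E^{+}|+|L^{+}| = |E|+|L|+2t = 2|V|+2t = 2|V^{+}|$, using that $G$ being $\tau(\Gamma)$-isostatic forces $|E|+|L|=2|V|$. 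Hence $R(G^{+},p^{+},q^{+})$ is a $2|V^{+}|\times 2|V^{+}|$ matrix and it suffices to show it has trivial kernel. Starting from a $\tau(\Gamma)$-generic (hence isostatic) realisation $(G,p,q)$, I keep $p,q$ unchanged on $V,L$, set $q^{+}=q$, and place the new vertex $v$ at a generic point of the line through $p(x_{0})$ and $p(y_{0})$, extending by $p^{+}(\gamma_{i}v):=\tau(\gamma_{i})p^{+}(v)$. For a generic point of that line the map $p^{+}$ is injective and the orbit of $v$ has full size $t$; the key feature of this choice is that $p^{+}(v)-p(x_{0})$ is parallel to $p^{+}(v)-p(y_{0})$, and likewise $p^{+}(\gamma_{i}v)-p(x_{i})$ is parallel to $p^{+}(\gamma_{i}v)-p(y_{i})$ for each $i$.

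The next ingredient is a description of the flex space of the intermediate graph $G':=G-\{e_{0},\dots,e_{t-1}\}$. Let $Z:=\ker R(G',p,q)\subseteq\R^{2|V|}$. Since $(G,p,q)$ is isostatic, $R(G,p,q)$ is invertible, so deleting the $t$ rows indexed by $e_{0},\dots,e_{t-1}$ lowers the rank by exactly $t$ and $\dim Z=t$. Moreover the linear map
\[
\ell\colon Z\to\R^{t},\qquad \dot p\mapsto\Big((\dot p_{x_{i}}-\dot p_{y_{i}})\cdot(p_{x_{i}}-p_{y_{i}})\Big)_{i=0}^{t-1},
\]
is injective, because $\ell(\dot p)=0$ together with $\dot p\in Z$ means that $\dot p$ satisfies every edge and loop equation of $G$, i.e.\ $R(G,p,q)\dot p=0$, hence $\dot p=0$. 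Thus $\ell$ is an isomorphism onto $\R^{t}$.

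Finally I would kill the kernel of $R(G^{+},p^{+},q^{+})$. Write a kernel vector as $\dot p^{+}=(\dot p,\dot v_{0},\dots,\dot v_{t-1})$, where $\dot p\in\R^{2|V|}$ is its restriction to $V$ and $\dot v_{i}\in\R^{2}$ is its value at $\gamma_{i}v$. The rows indexed by the surviving edges of $G$ and by the loops of $L$ involve only $\dot p$ and give $R(G',p,q)\dot p=0$, so $\dot p\in Z$. At the vertex $v$, the equations for the edges $vx_{0}$ and $vy_{0}$ read $(p^{+}(v)-p(x_{0}))\cdot(\dot v_{0}-\dot p_{x_{0}})=0$ and $(p^{+}(v)-p(y_{0}))\cdot(\dot v_{0}-\dot p_{y_{0}})=0$; since $p^{+}(v)-p(x_{0})$ and $p^{+}(v)-p(y_{0})$ are both parallel to $p_{x_{0}}-p_{y_{0}}$, subtracting gives $(\dot p_{x_{0}}-\dot p_{y_{0}})\cdot(p_{x_{0}}-p_{y_{0}})=0$. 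Running the same argument at $\gamma_{i}v$ (equivalently, applying $\tau(\gamma_{i})$) yields the analogous identity for every $i$, that is $\ell(\dot p)=0$, so $\dot p=0$ by injectivity of $\ell$. With $\dot p=0$ the three equations at $v$ reduce to $\dot v_{0}\perp(p^{+}(v)-p(x_{0}))$ and $\dot v_{0}\perp(p^{+}(v)-p(z_{0}))$; for generic $p$ the vertex $z_{0}$ does not lie on the line through $x_{0}$ and $y_{0}$, so these two vectors are linearly independent and $\dot v_{0}=0$, and symmetrically $\dot v_{i}=0$ for all $i$. Hence $\dot p^{+}=0$, so $R(G^{+},p^{+},q^{+})$ is invertible and $(G^{+},p^{+},q^{+})$ is isostatic.

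The main obstacle is the choice of the placement of $v$: putting $v$ on the line $x_{0}y_{0}$ is exactly what forces the single scalar constraint that vertex $v$ imposes on $\dot p$ to coincide with the ``stress on the removed edge'' functional $\dot p\mapsto(\dot p_{x_{0}}-\dot p_{y_{0}})\cdot(p_{x_{0}}-p_{y_{0}})$, i.e.\ the $0$th coordinate of the isomorphism $\ell$; a generic placement would instead produce a less transparent linear combination of the coordinates of $\dot p$. The remaining points---the dimension count for $Z$, the injectivity of $\ell$, and the genericity bookkeeping ensuring that $p^{+}$ is injective, that the orbit of $v$ has size $t$, and that $z_{0}$ lies off the line $x_{0}y_{0}$---are routine. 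The symmetrised looped $1$-extension is handled in the same way: one deletes a loop orbit rather than an edge orbit, chooses the normal of the new loop at $v$ so that the constraint $v$ imposes on $\dot p$ becomes the functional $\dot p\mapsto q_{l_{i}}\cdot\dot p_{x_{i}}$, and runs the identical argument with $\ell$ replaced accordingly.
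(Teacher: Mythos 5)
Your proof is correct, and it rests on the same key device as the paper's: the new vertex is placed on the line through $p(x_{0})$ and $p(y_{0})$ (the paper uses the midpoint, you use a generic point of that line), so that the two new edges at $v$ jointly reproduce the deleted edge constraint. The verifications, however, are organised dually. The paper works in the row space of the rigidity matrix: it exhibits a three-term row dependence supported on the degenerate triangle on $v,v_{1},v_{2}$ (respectively on the edge-plus-two-loops configuration in the looped case), invokes Lemma~\ref{0-ext rigid} for the rank count, and then swaps one row of the dependence for another. You work in the kernel: the flex space $Z$ of the intermediate graph has dimension $t$, the evaluation map $\ell$ of the deleted constraints is an isomorphism $Z\to\R^{t}$, and the collinear placement forces the restriction to $V$ of any flex of $G^{+}$ to lie in $\ker\ell=0$, after which each new vertex is pinned by two independent directions. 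Your version is self-contained and makes the genericity bookkeeping explicit ($p^{+}$ injective, orbit of $v$ of full size, $z_{0}$ off the line $x_{0}y_{0}$ --- the last of which the paper also needs, implicitly, for its $0$-extension step); the paper's version is shorter because it reuses Lemma~\ref{0-ext rigid}. The one under-specified point is the looped case: choosing the normal of the new loop equal to $q(l)$ is not by itself enough to make the scalar constraint that $v$ imposes on $\dot p$ equal to $q_{l}\cdot\dot p_{x_{0}}$; you must also place $v$ so that the edge $vx_{0}$ is parallel to $q(l)$ (the paper takes $p^{+}(v)=p(x_{0})+q(l)$ and $q^{+}(l')=q(l)$). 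Since you say you run ``the identical argument'', this is clearly what you intend, but it should be stated.
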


\begin{proof}
Suppose first that the 1-extension adds a new set of vertices, $\{v, \gamma v, \dots, \gamma^{t-1} v\}$, where for each $i \in \{0, \dots, t-1\}$, $\gamma^i v$ is adjacent to three vertices, say $\gamma^i v_1, \gamma^i v_2, \gamma^i v_3$, and the edges $(\gamma^i v_1, \gamma^i v_2)$ are deleted.
Call this new graph $G^+$.
Write $p(v_{1}) = (x_{1},y_{1})$, $p(v_{2}) = (x_{2},y_{2})$, $p(v_{3}) = (x_{3},y_{3})$.
Define $p^{+}: V^{+} \to \R^{2}$ by putting $p^{+}(z)=p(z)$ for all $z\in V$, and choosing special positions so that $p^{+}(v) =  \frac{1}{2}(p(v_1)+p(v_2)) = (x,y)$, and $p^{+}(\gamma_{k}v) = (x^{(k)},y^{(k)})$. 
Now consider the rigidity matrix of the realisation of $K_3$ with vertex positions $p^+(v), p^+(v_1), p^+(v_2)$.
Then,
\begin{align*}
R(K_3,p^{+}) = & \left[ \begin{tabular}{ c c c }
    $p^+(v_1) - p^+(v_2)$ & $p^+(v_2) - p^+(v_1)$ & $\mathbf{0}$ \\
    $p^+(v_1) - p^+(v)$ & $\mathbf{0}$ & $p^+(v) - p^+(v_1)$ \\
    $\mathbf{0}$ & $(p^+(v_2) - p^+(v)$ & $p^+(v) - p^+(v_2)$ \\
    \end{tabular} \right]\\
= & \left[ \begin{tabular}{ c c c }
    $p^+(v_1) - p^+(v_2)$ & $p^+(v_2) - p^+(v_1)$ & $\mathbf{0}$ \\
    $\frac{1}{2}(p^+(v_1) - p^+(v_2))$ & $\mathbf{0}$ & $\frac{1}{2}(p^+(v_2) - p^+(v_1))$ \\
    $\mathbf{0}$ & $\frac{1}{2}(p^+(v_2) - p^+(v_1))$ & $\frac{1}{2}(p^+(v_1) - p^+(v_2))$ \\
    \end{tabular} \right]
\end{align*}
has rank 2 and the linear dependence is non-zero on all 3 rows.
We note that the $\tau(\gamma)$ preserves this linear dependence.
Then, since $(G^+ + \bigcup_{i=0}^{t-1}\{(\gamma^i v_1, \gamma^i v_2)\} \setminus \bigcup_{i=0}^{t-1}\{(\gamma^i v, \gamma^i v_2)\}, p^+, q)$ is obtained from $(G,p,q)$ by a symmetrised 0-extension, $$\rank R(G^+ + \bigcup_{i=0}^{t-1}\{(\gamma^i v_1, \gamma^i v_2)\} \setminus \bigcup_{i=0}^{t-1}\{(\gamma^i v, \gamma^i v_2)\}, p^+, q) = \rank R(G,p,q) + 2t.$$
We observe from $R(K_3,p^{+})$ that $\rank R(G^+ + \bigcup_{i=0}^{t-1}\{(\gamma^i v_1, \gamma^i v_2)\}, p^+, q) = \rank R(G,p,q) + 2t$ too, and further, since any row can be written as a linear combination of the other two, we can delete any edge orbit from the orbit of the $K_3$ subgraphs and preserve infinitesimal rigidity.
Since $(G^+,p^+,q)$ is infinitesimally rigid in this special position, $G^+$ is $\tau(\Gamma)$-isostatic.

Consider now a looped 1-extension that creates a new graph $G^+$ from $G$ by adding vertices $\{v, \gamma v, \dots, \gamma^{t-1} v\}$, where for each $i \in \{0, \dots, t-1\}$, $\gamma^i v$ is adjacent to two vertices, say $\gamma^i v_1, \gamma^i v_2$, and incident to the new loop $\gamma^i l'$, with the loops $\gamma^ i l = (\gamma^i v_1, \gamma^i v_1)$ deleted.
Write $p(v_{1}) = (x_{1},y_{1})$, $p(v_{2}) = (x_{2},y_{2})$.
Define $p^{+}: V^{+} \to \R^{2}$ by putting $p^{+}(z)=p(z)$ for all $z\in V$, and choosing special positions $p^{+}(v) = p(v_1)+q(l) = (x,y)$, and $p^{+}(\gamma_{k}v) = (x^{(k)},y^{(k)})$.
Further define $q^{+}: L^{+} \to \R^2$ by $q^{+}(h)=q(h)$ for all $h \in L$  and the special position $q^{+}(l') = q^{+}(l) = (c,d)$ and symmetrically the loops $\{\gamma l', \dots, \gamma^{t-1}l'\}$.
We then consider the rigidity matrix for the realisation of $H = G[\{v,v_1\}]$ with the vertex positions $p^+(v), p^+(v_1)$ and linear constraints $q^+(l), q^+(l')$.
Then,
\[
R(H,p^{+},q^{+}) = \left[ \begin{tabular}{ c c }
    $p^+(v_1) - p^+(v)$ & $p^+(v) - p^+(v_1)$ \\
    $q^+(l)$ & $\mathbf{0}$\\
    $\mathbf{0}$ & $q^+(l')$ \\
    \end{tabular} \right]\\
= \left[ \begin{tabular}{ c c }
    $(-c,-d)$ & $(c,d)$ \\
    $(c,d)$ & $\mathbf{0}$\\
    $\mathbf{0}$ & $(c,d)$ \\
    \end{tabular} \right]
\]
has rank 2 and the linear dependence is non-zero on all 3 rows.
We note that $\tau(\gamma)$ preserves this linear dependence.
Then since $(G^+ + \bigcup_{i=0}^{t-1}\{(\gamma^i v_1, \gamma^i v_1)\} \setminus \bigcup_{i=0}^{t-1}\{(\gamma^i v, \gamma^i v_1)\}, p^+, q^+)$ is obtained from $(G,p,q)$ by a symmetrised 0-extension, $$\rank R(G^+ + \bigcup_{i=0}^{t-1}\{(\gamma^i v_1, \gamma^i v_1)\} \setminus \bigcup_{i=0}^{t-1}\{(\gamma^i v, \gamma^i v_1)\}, p^+, q^+) = \rank R(G,p,q) + 2t.$$
We then see from $R(H,p^{+},q^{+})$ that $\rank R(G^+ + \bigcup_{i=0}^{t-1}\{(\gamma^i v_1, \gamma^i v_1)\}, p^+, q) = \rank R(G,p,q) + 2t$ too, and further since any row can be written as a linear combination of the other two, we can delete one orbit of $\{l,l',vv_1\}$ and preserve infinitesimal rigidity.
Since $(G^+,p^+,q^+)$ is infinitesimally rigid in this special position, $G^+$ is $\tau(\Gamma)$-isostatic.
\end{proof}

We next show some specific families of graphs are $\Gamma$-isostatic for certain groups. In the next sections these families will turn out to be base graphs for our construction arguments.
A \emph{pinned graph} on $n$ vertices, is the graph with 2 loops incident to each vertex and $E = \emptyset$, which we denote $\mathcal{P}_n$.
A \emph{looped $n$-cycle} is a cycle on $n$ vertices with a single loop incident to each vertex (see Figure \ref{fig: c_5 base}), which we denote $LC_n$.
Define the symmetric graph $(\mathcal{P}_1,\phi_0)$ to have $\phi_0: \Z/{2\Z} \to \Aut(\mathcal{P}_1)$ fix all elements of $\mathcal{P}_1$ for all $\gamma \in \Z/{2\Z}$.
Furthermore, $(\mathcal{P}_1,\phi_1)$ to have $\phi_1: \Z/{2\Z} \to \Aut(\mathcal{P}_1)$ fix the vertex and transpose the loops for the non-trivial element of $\Z/{2\Z}$.
Let $(\mathcal{P}_n,\phi_n)$ be defined by $\phi_n: \Z/{n\Z} \to \Aut(\mathcal{P}_n)$ to have a single orbit of the $n$ vertices.
Lastly, $(LC_n,\psi_n)$ has $\psi_n: \Z/{n\Z} \to \Aut(LC_n)$ form a single orbit of the $n$ vertices.

\begin{lem}\label{lem: base rigid}
    The following graphs or graph classes are $\tau(\Gamma)$-isostatic:
    \begin{itemize}
        \item $(\mathcal{P}_1,\phi_0)$ is $C_2$-isostatic;
        \item $(\mathcal{P}_1,\phi_1)$ is $C_4$-isostatic;
        \item for $n \geq 2$, $(\mathcal{P}_n,\phi_n)$ is $C_n$-isostatic;
        \item for odd $n\geq 3$, $(LC_n,\psi_n)$ is $C_n$-isostatic. 
    \end{itemize}
\end{lem}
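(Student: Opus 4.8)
The plan is to handle the four items separately: the first three have no edges (or a trivial edge set), so the rigidity matrix is block diagonal and a symmetric realisation can be written down almost immediately; the genuine work is in $LC_n$, where I will run an infinitesimal-rigidity argument around the cycle.

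For $(\mathcal{P}_1,\phi_0)$, $(\mathcal{P}_1,\phi_1)$ and $(\mathcal{P}_n,\phi_n)$ the edge set is empty, so for \emph{any} realisation the matrix $R(G,p,q)$ is block diagonal with one $2\times 2$ block per vertex, the two rows of the block at $v_i$ being the normals of the two loops at $v_i$. First I would fix a symmetric placement: the single vertex of $\mathcal{P}_1$ at the origin (its only admissible position, with the two constraint lines there perpendicular in the $C_4$ case, consistent with Table~\ref{table:fixed} and the proof of Theorem~\ref{thm: fixed 2dim lin}), and the $n$ vertices of $\mathcal{P}_n$ on a single orbit of a regular $n$-gon of nonzero radius. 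Then I would take the two loop-normals at a chosen orbit representative to be $e_1,e_2$ and transport them around each orbit by $\tau$. Since $\tau(\gamma)$ is orthogonal (and a loop normal only matters up to sign, which covers the $C_4$ case), the resulting $q$ meets all the symmetry conditions, and each $2\times 2$ block, being up to a row swap the orthogonal matrix $(R^{i})^{T}$, is invertible. Hence $R(G,p,q)$ is square of full rank $2|V|=|E|+|L|$, i.e.\ the framework is isostatic. The only bookkeeping left is that $p$ is injective (immediate) and that no loop is fixed by a rotation of order $>2$ (the loops of $\mathcal{P}_n$ sit in orbits of full size, and the two loops at the origin in the $\mathcal{P}_1$ cases are fixed only by $c_2$).

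For $(LC_n,\psi_n)$ with $n\ge 3$ odd I would realise the vertices and loop-normals by $p_i=R^i p_0$ and $q_i=R^i q_0$, with $p_0=(1,0)$, $q_0=(0,1)$ and $R=\tau(c_n)$ the rotation by $\frac{2\pi}{n}$; this is $C_n$-symmetric with the $n$ vertices, the $n$ edges and the $n$ loops each forming a single orbit. Then I would argue that any infinitesimal motion $\dot p$ vanishes: the loop equation $q_i\cdot\dot p_i=0$ forces $\dot p_i=\lambda_i q_i^{\perp}$ for a scalar $\lambda_i$, where $q_i^{\perp}$ is $q_i$ turned by $90^{\circ}$; as $R$ commutes with this turn, $q_i^{\perp}=R^i q_0^{\perp}$. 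Substituting into the edge equation for $v_iv_{i+1}$ gives
\[
\lambda_i\big((p_i-p_{i+1})\cdot q_i^{\perp}\big)=\lambda_{i+1}\big((p_i-p_{i+1})\cdot q_{i+1}^{\perp}\big),
\]
and a short computation using the orthogonality of $R$ shows that the left-hand bracket equals $N:=\big((I-R)p_0\big)\cdot q_0^{\perp}$ and the right-hand bracket equals $D:=\big((I-R)p_0\big)\cdot R q_0^{\perp}$ for \emph{every} $i$ (including the wrap-around edge $v_{n-1}v_0$). For the chosen data one finds $D=1-\cos\frac{2\pi}{n}\neq 0$ and $N=-D$, so composing the relations $\lambda_{i+1}=(N/D)\lambda_i$ around the $n$-cycle forces $\lambda_0=(N/D)^n\lambda_0=(-1)^n\lambda_0=-\lambda_0$, whence $\lambda_0=0$ and then every $\lambda_i=0$, i.e.\ $\dot p=0$. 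Since $|E|+|L|=2n=2|V|$, infinitesimal rigidity also yields independence, so the framework is isostatic.

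The hard part will be this $LC_n$ case, and within it the two observations that make it work: that the $C_n$-symmetry forces each edge-equation coefficient into one of only two values, collapsing the cycle to the single scalar identity $\lambda_0=(N/D)^n\lambda_0$; and, crucially, that oddness of $n$ makes $(-1)^n=-1\neq 1$. (For even $n$ this particular choice with $N=-D$ would fail, and one would instead have to tilt $q_0$ off the coordinate axes so that $(N/D)^n\neq 1$ — which is why the statement is phrased only for odd $n$.) The remaining points are routine: $D\neq 0$, injectivity of $p$, and checking that the constructed $q$ is genuinely $\tau(\Gamma)$-symmetric.
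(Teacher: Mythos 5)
Your proposal is correct and follows essentially the same route as the paper: the first three items are handled by observing that each vertex is pinned by two independent (or, in the $C_4$ case, perpendicular) loop normals, and $LC_n$ is handled by the same special position in which every constraint line passes through the origin so vertices can only move radially. Your recursion $\lambda_{i+1}=(N/D)\lambda_i=-\lambda_i$ is just the quantitative form of the paper's qualitative observation that inward- and outward-moving vertices must alternate around the cycle, which is impossible for odd $n$.
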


\begin{proof}
    In the first two bullet points, we have a single vertex restricted by two linear constraints.
    This will be pinned unless the corresponding loops are $c_2$ images of each other, in which case the linear constraints must coincide.
    In the third bullet point, every vertex is pinned.
    Therefore in the first three bullet points the graphs are $\tau(\Gamma)$-isostatic.
    In the final bullet point, we put the framework in special position, so that the linear constraints pass through the origin (that is the vertices can only move radially).
    Any infinitesimal motion of a vertex can therefore be described as ``inward" or ``outward" from the origin, depending on whether the radial distance decreases or increases.
    In order for the edge lengths to be preserved, any inward moving vertex must be adjacent to two outward moving vertices, and likewise any outward moving vertex must be adjacent to two inward moving vertices.
    As a result of this, inward and outward moving vertices must alternate in the cycle.
    Thus there are as many outward as inward moving vertices, which is not possible with $n$ odd.
    Hence, this special position is infinitesimally rigid, and so the graph is $\tau(\Gamma)$-isostatic.
\end{proof}

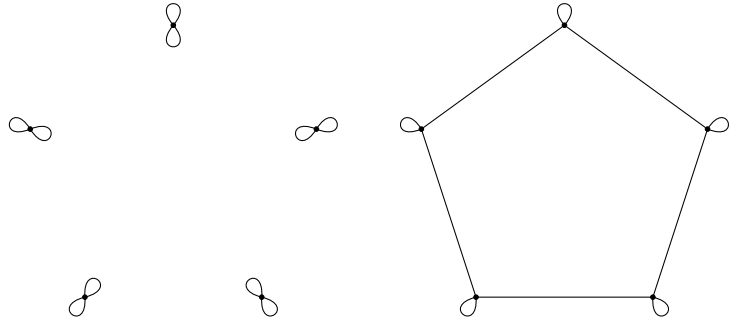
\begin{figure}[ht]
    \centering
    \begin{tikzpicture}
  [scale=0.2,auto=left]

  \coordinate (n6) at (0,10);
  \coordinate (n7) at (9.511,3.090);
  \coordinate (n8) at (5.878,-8.090);
  \coordinate (n9) at (-5.878,-8.090);
  \coordinate (n10) at (-9.511,3.090);

 \draw[fill=black] (n6) circle (0.15cm);
 \draw[fill=black] (n7) circle (0.15cm);
 \draw[fill=black] (n8) circle (0.15cm);
 \draw[fill=black] (n9) circle (0.15cm);
 \draw[fill=black] (n10) circle (0.15cm);

\draw[scale = 5] (0,2)  to[in=50,out=130,loop] (0,0);
\draw[scale = 5] (0,2)  to[in=-50,out=-130,loop] (0,0);
\draw[scale = 5] (1.9022,0.618)  to[in=-22,out=58,loop] (0,0);
\draw[scale = 5] (1.9022,0.618)  to[in=158,out=-122,loop] (0,0);
\draw[scale = 5] (1.1756,-1.618)  to[in=86,out=166,loop] (0,0);
\draw[scale = 5] (1.1756,-1.618)  to[in=-94,out=-14,loop] (0,0);
\draw[scale = 5] (-1.1756,-1.618)  to[in=-166,out=-86,loop] (0,0);
\draw[scale = 5] (-1.1756,-1.618)  to[in=14,out=94,loop] (0,0);
\draw[scale = 5] (-1.9022,0.618)  to[in=122,out=202,loop] (0,0);
\draw[scale = 5] (-1.9022,0.618)  to[in=302,out=22,loop] (0,0);

  \coordinate (n16) at (26,10);
  \coordinate (n17) at (35.511,3.090);
  \coordinate (n18) at (31.878,-8.090);
  \coordinate (n19) at (20.122,-8.090);
  \coordinate (n20) at (16.489,3.090);

 \draw[fill=black] (n16) circle (0.15cm);
 \draw[fill=black] (n17) circle (0.15cm);
 \draw[fill=black] (n18) circle (0.15cm);
 \draw[fill=black] (n19) circle (0.15cm);
 \draw[fill=black] (n20) circle (0.15cm);

  \foreach \from/\to in {n16/n17,n17/n18,n18/n19,n19/n20,n20/n16} 
    \draw (\from) -- (\to);
\draw[scale = 5] (5.2,2)  to[in=50,out=130,loop] (0,0);
\draw[scale = 5] (7.1022,0.618)  to[in=-22,out=58,loop] (0,0);
\draw[scale = 5] (6.3756,-1.618)  to[in=-94,out=-14,loop] (0,0);
\draw[scale = 5] (4.0244,-1.618)  to[in=-166,out=-86,loop] (0,0);
\draw[scale = 5] (3.2978,0.618)  to[in=122,out=202,loop] (0,0);

\end{tikzpicture}
    \caption{Base graphs for the construction of $C_n$-symmetric ($n\geq 3$) linearly constrained isostatic frameworks; $(\mathcal{P}_5, \phi_5)$ and $(LC_5,\psi_5)$ depicted.}
    \label{fig: c_5 base}
\end{figure}

Using another special position argument, Lemma \ref{lem: base rigid} can be extended to show all $C_n$-symmetric looped $n$-cycles are $\tau(\Gamma)$-isostatic \cite{Wall}. In this paper, however, we will only focus on rotational groups of order 2 or of odd order.

%\begin{proof}
%    The case for odd looped $n$-cycles was shown in Lemma \ref{lem: base rigid} and for simplicity is omitted here, but can also be shown to be rigid by a small elaboration of this method.
%    Let $n=2k$ be even.
%    Let $G_0 = (\{v_0\},\emptyset, \{l_0, l'\})$ be a graph with one pinned vertex, and define $p_0: V \to \R^2$ by $p_0(v_0)=(1,0)$.
%    Note that as long as $q_0(l_0)$ and $q_0(l')$ are not linearly dependent, the framework $(G_0, p_0, q_0)$ is rigid.
%    For $1 \leq i \leq n-1$, construct $G_i$ from a looped 0-extension, with new vertex $v_i$ adjacent to $v_{i-1}$ and loop $l_i$ at $v_i$.
%    The maps $p_i, q_i$ act on $\{v_0, \dots, v_{i-1}\}$ and $\{l_0, \dots, l_{i-1}\}$ as $p_{i-1}$ and $q_{i-1}$ respectively, with $p_i(v_i) = \tau(c_{2k})(p_{i-1}(v_{i-1}))$ and $q_i(l_i) = \tau(c_{2k})(q_{i-1}(l_{i-1}))$.
%    Since $p_i(v_i)-p_i(v_{i-1}) \neq 0$, the 0-extension preserves rigidity.
%    Write $p = p_{2k-1}$ and $q = q_{2k-1}$.
%    Let $G$ be a looped $2k$-cycle.
%    By construction, $G_{2k-1} = G-\{v_{0}v_{2k-1}+l'\}$, and in the framework $(G_{2k-1}, p, q)$, the constraints from $l'$ is the line through $p(v_0)$ and $p(v_{k-1})$, and the constraints from $l_{2k-1}$ is the line through $p(v_{2k-1})$ and $p(v_{k})$.
%    As with $(H,p^+,q^+)$ in Lemma \ref{1-ext rigid}, we can show that the rigidity matrix on $(\{v_0, v_{2k-1}\},v_0 v_{2k-1},\{l_{2k-1},l'\})$ has rank 2 with linear dependence non-zero on all 3 rows, hence $\rank R(G_{2k-1},p,q) = \rank R(G,p,q)$ and so $G$, which is $C_{2k}$-symmetric, is rigid.
%\end{proof}

\section{$C_2$-symmetric isostatic graphs}
\label{sec:C2}

In the following sections we prove our main results.
These results show that the standard sparsity counts, together with the necessary conditions derived in Section \ref{sec:necrep}, are also sufficient for generic symmetric frameworks to be isostatic.  The proofs are based on a recursive construction using the Henneberg-type moves discussed in the previous section. 
We first consider $C_2$ rotational symmetry.
Recall that a $C_2$-tight graph is a graph with $v_2 = e_2 =l_2 = 0$ or $v_2 =1, e_2 = 0, l_2 =2$. 

\subsection{Graph theoretic preliminaries}
We will use standard graph theoretic terminology. 
For a graph $G=(V,E)$, $\delta(G)$ will denote the minimum degree of $G$, and $N(v)$ will denote the neighbours of a vertex $v\in V$.
The degree of a vertex $v$ is denoted $d_G(v)$.
For $X\subset V$ we will use $i_{E+L}(X)$, $i_E(X)$, $i_L(X)$ to denote the number of edges and loops, edges, loops in the induced subgraph $G[X]$ respectively, and the set $X$ will be called \emph{k-critical} and \emph{k-edge-critical}, for $k\in \mathbb{N}$, if $i_{E+L}(X)=2|X|-k$, and $i_E(X)=2|X|-k$ respectively.
We also write $k_X$ and $\Bk_X$ to denote the critical and edge-critical values, that is  $k_X= 2|X|-i_{E+L}(X)$ and $\Bk_X = 2|X|-i_E(X)$ for $X \subset V$. 
For $X,Y\subset V$, $d_G(X,Y)$ will denote the number of edges of the form $xy$ with $x\in X\setminus Y$ and $y\in Y\setminus X$.
We will often suppress subscripts when the graph is clear from the context and use $d(v)$ and $d(X,Y)$.

We also say that a subset $X$ of $V$ is \emph{$\tau(\Gamma)$-symmetric} (\emph{sparse}) if $G[X]$ is $\tau(\Gamma)$-symmetric (sparse).
Note that if $X$ is $\tau(\Gamma)$-symmetric and for each $\gamma \in \Gamma\setminus\{\textrm{id}\}$, $\gamma$ has no fixed edges or loops, then $|\Gamma|$ divides $k_X$ and $\Bk_X$.
Additionally, we note that a $C_2$-tight graph with fixed edge, loop, and vertex counts described in Table \ref{table:fixed}, has either no fixed loops or two fixed loops incident to a fixed vertex.
Therefore any symmetric vertex set will induce a subgraph with no fixed loops or two fixed loops, and so $|C_2| = 2$ will still divide $k_X$.
This fact will be crucial in what follows.

\begin{lem}\label{critcapcup}
Let $G$ be a graph and suppose $A,B \subseteq V$ have non-empty intersection.
Then $k_A + k_B = k_{A \cup B} + k_{A \cap B} + d(A,B)$ and $\Bk_A + \Bk_B = \Bk_{A \cup B} + \Bk_{A \cap B} + d(A,B)$.
\end{lem}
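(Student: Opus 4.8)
This is the standard submodularity-type identity for sparsity counts, and the plan is to prove it by a direct counting argument on the vertices and edges/loops of $G$. The key observation is that the quantity $k_X = 2|X| - i_{E+L}(X)$ splits into a vertex part $2|X|$ and an edge/loop part $i_{E+L}(X)$, and each part can be analysed separately via standard set-theoretic identities. For the vertex term, the inclusion-exclusion identity $|A| + |B| = |A \cup B| + |A \cap B|$ gives $2|A| + 2|B| = 2|A \cup B| + 2|A \cap B|$ with no correction term. For the edge/loop term, I would classify each edge and loop of $G[A \cup B]$ according to which of the sets $A \setminus B$, $A \cap B$, $B \setminus A$ its endpoint(s) lie in, and then count how many times each such edge or loop is counted by $i_{E+L}(A) + i_{E+L}(B)$ versus by $i_{E+L}(A \cup B) + i_{E+L}(A \cap B)$.

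Carrying this out: a loop at a vertex in $A \cap B$ is counted twice on the left ($i(A)$ and $i(B)$) and twice on the right ($i(A\cup B)$ and $i(A \cap B)$); a loop at a vertex in $A \setminus B$ (or symmetrically $B \setminus A$) is counted once on each side (in $i(A)$ and in $i(A \cup B)$). An edge with both endpoints in $A \cap B$, or both in $A \setminus B$, or both in $B \setminus A$, behaves exactly as in the loop cases. An edge with one endpoint in $A \cap B$ and one in $A \setminus B$ is counted in $i(A)$ and in $i(A \cup B)$ — once on each side. The only edges treated asymmetrically are precisely those with one endpoint in $A \setminus B$ and the other in $B \setminus A$: such an edge is counted \emph{zero} times in $i(A)$, $i(B)$, and $i(A \cap B)$, but \emph{once} in $i(A \cup B)$. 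There are exactly $d(A,B)$ such edges. Hence $i_{E+L}(A) + i_{E+L}(B) = i_{E+L}(A\cup B) + i_{E+L}(A \cap B) - d(A,B)$. Combining with the vertex identity and rearranging yields $k_A + k_B = k_{A\cup B} + k_{A \cap B} + d(A,B)$, as claimed. (The hypothesis $A \cap B \neq \emptyset$ is used only to ensure $i_{E+L}(A \cap B)$ and $k_{A \cap B}$ are defined in the intended way; the identity itself is really an edge-counting fact.)

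For the second equation, I would simply repeat the same argument with $i_{E+L}$ replaced by $i_E$ throughout (equivalently, observe that loops contribute identically, namely symmetrically, to both sides, so removing them from the count changes $k$ to $\Bk$ on all four terms while leaving $d(A,B)$ — which counts only non-loop edges between $A\setminus B$ and $B\setminus A$ — untouched). This gives $\Bk_A + \Bk_B = \Bk_{A\cup B} + \Bk_{A\cap B} + d(A,B)$.

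The argument is entirely routine; there is no real obstacle. The only point requiring a little care is the bookkeeping in the case analysis — making sure every type of edge (by endpoint location) and every loop is accounted for exactly once, and correctly identifying that the cross edges between $A \setminus B$ and $B \setminus A$ are the unique source of the discrepancy. I would present this compactly, perhaps as a short table or a one-line-per-case enumeration, rather than belabouring each subcase.
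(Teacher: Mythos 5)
Your proposal is correct and follows essentially the same route as the paper: both rest on the vertex identity $|A|+|B|=|A\cup B|+|A\cap B|$ together with the edge/loop counting identity $i_{E+L}(A)+i_{E+L}(B)=i_{E+L}(A\cup B)+i_{E+L}(A\cap B)-d(A,B)$, the only difference being that the paper states this counting identity without the explicit case analysis you supply. Your verification of the cases is accurate, so nothing further is needed.
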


\begin{proof}
Since $|A| + |B| = |A \cup B| + |A \cap B|$, we have
\begin{equation}\label{counting W}
    \begin{split}
        2|A|-k_{A} + 2|B|-k_{B} & =  i_{E+L}(A) + i_{E+L}(B) = i_{E+L}(A\cup B) + i_{E+L}(A \cap B) - d(A,B) \\
        & = 2|A \cup B| - k_{A \cup B} + 2|A \cap B| - k_{A \cap B} - d(A,B) \\
        & = 2|A| + 2|B| - k_{A \cup B} - k_{A \cap B} - d(A,B),
    \end{split}
\end{equation}
giving the result.
The edge-critical variant is identical.
\end{proof}

Most of the technical work in the next two sections involves analysing when we can remove a vertex of degree 3.
Hence, for brevity, we will call a vertex of degree 3 a \emph{node}.
We will consider \emph{reduction operations}: these are the reverse of the extension operations described in Section \ref{sec:ops}, namely symmetrised 0-reductions and symmetrised 1-reductions.

\subsection{Reduction operations}

\begin{lem}\label{lem: c_2 two edges redu}
Let $(G,\phi)$ be a $C_{2}$-tight graph containing a vertex $v$ incident to two edges.
Then $G \setminus \{v,v'\}$ is $C_2$-tight.
\end{lem}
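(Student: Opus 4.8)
The plan is to perform a symmetrised $0$-reduction at the orbit $\{v,v'\}$ and verify that the resulting graph $G^- := G \setminus \{v,v'\}$ remains $C_2$-tight. The vertex $v$ has degree $3$ in $G$ (being a node incident to two edges, it must also carry one loop, since $G$ is tight and a non-fixed vertex with only two incident edges would violate the count $|E|+|L|=2|V|$ — indeed the orbit $\{v,v'\}$ contributes exactly $4$ to $2|V|$ and must therefore lose exactly $4$ from $i_{E+L}$); alternatively $v$ is incident to two edges and one loop in all cases we need. Deleting $\{v,v'\}$ removes $4$ vertices' worth of slack and, generically, removes exactly $4$ incident edge/loop elements (two edges and the loop at $v$, and their mirror images), so the global count $|E^-|+|L^-| = 2|V^-|$ is immediate. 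The content of the lemma is therefore entirely in checking \emph{sparsity} of $G^-$.

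First I would set up the sparsity check by contradiction: suppose $G^- = G\setminus\{v,v'\}$ is not sparse. Then either there is a set $X \subseteq V^- = V\setminus\{v,v'\}$ with $i_{E+L}(X) > 2|X|$, i.e. $k_X^{G^-} < 0$, or there is a simple subgraph on $X$ with $i_E(X) > 2|X|-3$ and $|E(X)|>0$, i.e. $\Bk_X^{G^-} < 3$. Since $G$ itself is sparse, any such violating set $X$ must have had its count strictly increased by the deletion — but deletion of vertices cannot increase induced edge counts. So instead the mechanism must be that the reduction is not well-defined in the naive way, or more precisely that the lemma's reduction is only asserting the \emph{graph} $G\setminus\{v,v'\}$ is tight, and the subtlety is: why does removing the orbit $\{v,v'\}$ remove exactly $4$ from $|E|+|L|$ and not fewer? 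This forces $v$ (hence $v'$) not to be adjacent to its mirror $v'$ and not to share structure pathologically; one must use that $\{v,v'\}$ is a genuine size-$2$ orbit (so $v\neq v'$, as $v$ has only two incident edges and a loop, it cannot be the fixed vertex which carries two loops and no edges). Then $G[\{v,v'\}]$ has no internal edges or loops beyond the two loops $(v,v),(v',v')$, and the two edges at each of $v,v'$ go to $V^-$, giving the clean count.

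The main work — and the main obstacle — is the sparsity of $G^-$ as a \emph{subgraph} condition, specifically ruling out that after deletion some $X\subseteq V^-$ becomes critical in a forbidden way relative to the \emph{simple} sparsity bound $|E'|\le 2|V'|-3$. Here the point is that $G$ is tight, so for any $X\subseteq V^-$ we already have $i_{E+L}^G(X) \le 2|X|$ and $i_E^G(X)\le 2|X|-3$ when $G[X]$ is simple with an edge; and $i_{E+L}^{G^-}(X) = i_{E+L}^G(X)$, $i_E^{G^-}(X)=i_E^G(X)$ since $v,v'\notin X$. Hence sparsity of $G^-$ on subsets of $V^-$ is inherited directly from sparsity of $G$, and the only global equation to check is $|E^-|+|L^-| = 2|V^-|$, which follows from the orbit-structure count above. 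I expect the genuinely delicate point to be the bookkeeping establishing that the orbit $\{v,v'\}$ together with its incident edges and loops accounts for exactly $4$ elements of $E\cup L$ — i.e. that the two loops are distinct from each other and from anything in $G[V^-]$, and the four incident edges (two at $v$, two at $v'$) are distinct and land in $V^-$ — which is where the hypotheses (tightness, $v$ incident to exactly two edges, $C_2$-symmetry with the structure of Table~\ref{table:fixed}) are all used; I would handle this by the fixed-count analysis already recorded in the paragraph before Lemma~\ref{critcapcup}, namely that a $C_2$-tight graph has either no fixed loops or exactly two fixed loops at a fixed vertex, so a non-fixed node $v$ carries one non-fixed loop whose mirror is a distinct loop at $v'$.
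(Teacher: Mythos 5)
Your overall strategy is the right one and is essentially what the paper does (its proof is a single sentence): every induced subgraph of $G\setminus\{v,v'\}$ is an induced subgraph of $G$, so both sparsity conditions are inherited verbatim, and the only thing left to verify is the global equality $|E^-|+|L^-|=2|V^-|$, which comes down to showing that $\{v,v'\}$ is a genuine two-element orbit whose removal deletes exactly four elements of $E\cup L$.

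However, there is a genuine error in your reading of the hypothesis, and it propagates into your count. You assert that $v$ ``must also carry one loop'' in addition to two edges, justified by the claim that a vertex with only two incident edges would violate $|E|+|L|=2|V|$. This is false: that count is global, not local, and a $C_2$-tight graph can certainly contain a vertex incident to exactly two edges and no loop (for instance, apply a symmetrised $0$-extension of the first kind to $(\mathcal{P}_2,\phi_2)$). The intended hypothesis --- see the sentence immediately following the lemma and the two variants of the $0$-extension in Figure~\ref{fig:0ext} --- is that $v$ is incident to exactly \emph{two} elements of $E\cup L$ in total: either two edges to distinct vertices (degree $2$), or one edge and one loop (degree $3$, the loop contributing $2$ to the degree). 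Your enumeration of the removed elements, ``two edges and the loop at $v$, and their mirror images'', is six elements, not four; if $v$ really were incident to two edges and a loop, then $|E^-|+|L^-|=2|V|-6=2|V^-|-2$ and $G\setminus\{v,v'\}$ would \emph{not} be tight, so the statement would fail under your reading. With the correct reading the count is exactly $4$ (using, as you correctly note, that $v\neq v'$ because a fixed vertex must carry the two fixed loops, and that $vv'\notin E$ because $e_2=0$ in a $C_2$-tight graph), and the rest of your argument goes through and agrees with the paper's.
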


Note that this lemma includes the cases when $v$ has degree two and is adjacent to two distinct vertices and when $v$ has degree three with a loop at $v$ (recall Figure \ref{fig:0ext}).

\begin{proof}
If either $G - v$ or $G \setminus \{v,v'\}$ breaks sparsity, $G$ would not be tight.
\end{proof}

In the proof of the following lemma, spefically the second paragraph of Case 2, we prove the following remark which we will reference in future proofs.
\begin{rem}\label{rem: no 0c and 4e}
Suppose $(G,\phi)$ is $C_2$-tight containing a node $v\in V$ with $N(v)=\{x,y,z\}$. Then it is impossible for  $x,y$ to be in a 0-critical set and $x,z,x',z'$ to be in a $4$-edge-critical set. This conclusion is independent of the number of edges induced by $N(v)$.
\end{rem}

\begin{lem}\label{lem: c2 0,1,3,4 crit}
Let $(G,\phi)$ be a $C_{2}$-tight graph and suppose $v \in V$ is a node with $N(v) =\{x,y,z\}$ and $N(v) \cap N(v') = \emptyset$.
For a pair $x_1,x_2 \in \{x,y,z\}$ with $x_1 x_2 \notin E$, the following holds:
$x_1, x_2$ is not contained in any $0$-critical set or any $3$-edge-critical set, and
$x_1, x_2, x_1', x_2'$ is not contained in any $1$-critical set or any $4$-edge-critical set.
\end{lem}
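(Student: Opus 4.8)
The plan is to argue by contradiction using the submodularity relation in Lemma~\ref{critcapcup}, exploiting the fact that $|C_2| = 2$ divides the critical value $k_X$ of any $C_2$-symmetric vertex set (as noted in the preliminaries). I would split into the two claimed assertions and handle each by assuming a small critical (respectively edge-critical) set exists and combining it with its image under the non-trivial group element.

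First I would treat the claim that $x_1, x_2$ lies in no $0$-critical set. Suppose $A \ni x_1, x_2$ with $k_A = 0$. Since $v \notin A$ (as $x_1x_2 \notin E$ and $v$ a node would force $i_{E+L}$ too large — more precisely adding $v$ and the two edges $vx_1, vx_2$ to a $0$-critical set would violate sparsity), I can consider $A \cup \{v\}$; but the cleaner route is to look at $A$ together with $A' = \gamma A$. If $A \cap A' = \emptyset$, then $A \cup A' \cup \{v, v'\}$ carries $i_{E+L} \geq i_{E+L}(A) + i_{E+L}(A') + 4 = 2|A| + 2|A'| + 4 = 2|A \cup A' \cup \{v,v'\}|$, forcing this set to be $0$-critical and in fact saturated; then adding the third edge $vz$ (with $z \notin A$, possible since $N(v) \cap N(v') = \emptyset$ and $z \neq x_1, x_2$... here one must check $z$'s location carefully) breaks sparsity. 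If instead $A \cap A' \neq \emptyset$, apply Lemma~\ref{critcapcup}: $0 = k_A + k_{A'} = k_{A \cup A'} + k_{A \cap A'} + d(A, A') \geq k_{A \cap A'} \geq 0$, forcing $A \cap A'$ to be $0$-critical and $C_2$-symmetric, whence $2 \mid k_{A \cap A'} = 0$ is fine — so instead I extract that $A \cup A'$ is $0$-critical and $C_2$-symmetric and contains $x_1, x_2, x_1', x_2'$, then add $v, v', vx_1, vx_2, v'x_1', v'x_2'$ to contradict sparsity of $G$. The $3$-edge-critical case is analogous, working with $i_E$ and $\Bk$, using the edge-critical form of Lemma~\ref{critcapcup}; here I also invoke Remark~\ref{rem: no 0c and 4e} to rule out the incompatible configuration, and Remark~\ref{rem: Cs fixed 0-ext} is not needed.

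For the second assertion — that $x_1, x_2, x_1', x_2'$ lies in no $1$-critical set and no $4$-edge-critical set — I would suppose $B \supseteq \{x_1, x_2, x_1', x_2'\}$ with $k_B = 1$. But $B$ is then a vertex set which, together with $B' = \gamma B$, has non-empty intersection (it contains $x_1, x_2$ and their images, so $B \cap B' \supseteq \{x_1, x_2, x_1', x_2'\} \neq \emptyset$). By Lemma~\ref{critcapcup}, $2 = k_B + k_{B'} = k_{B \cup B'} + k_{B \cap B'} + d(B,B')$. Now $B \cup B'$ is $C_2$-symmetric, so $2 \mid k_{B \cup B'}$, forcing $k_{B \cup B'} \in \{0, 2\}$; and $B \cap B'$ is $C_2$-symmetric, so $2 \mid k_{B \cap B'}$. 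The possibilities are $k_{B\cup B'} = 2, k_{B \cap B'} = 0, d = 0$ or $k_{B \cup B'} = 0, k_{B \cap B'} = 2, d = 0$ (and $d(B,B')$ must be even as well by symmetry, consistent with $d = 0$). Either way one of $B \cup B'$ or $B \cap B'$ is $0$-critical and contains $x_1, x_2$, which we already excluded in the first part. For the $4$-edge-critical case, $\Bk_B = 4$ gives $\Bk_B + \Bk_{B'} = 8 = \Bk_{B \cup B'} + \Bk_{B \cap B'} + d(B,B')$ with both union and intersection $C_2$-symmetric hence having edge-critical values divisible by $2$; chasing the parities forces a $0$- or $2$-edge-critical symmetric set containing $x_1, x_2$ (or their orbit), contradicting either the first part of this lemma or Remark~\ref{rem: no 0c and 4e}.

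\textbf{Main obstacle.} The delicate point is the bookkeeping when $A \cap A' \neq \emptyset$ (respectively $B \cap B' \neq \emptyset$): one must track simultaneously that the union and intersection are $C_2$-symmetric, that their critical values satisfy the divisibility-by-$2$ constraint, and that the fixed-loop accounting (no fixed loops or exactly two fixed loops at a fixed vertex, per the discussion before Lemma~\ref{critcapcup}) does not perturb the count. A second subtle point is verifying that the third neighbour $z$ of $v$ genuinely lies outside the critical sets under consideration — this is where the hypothesis $N(v) \cap N(v') = \emptyset$ is used, ensuring $v, v'$ can both be re-attached to their respective neighbourhoods to produce the sparsity violation. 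I expect the case analysis to fan out into several subcases according to which of $x_1, x_2, z$ land in the various intersections, but each subcase should close by the same mechanism: re-inserting the node orbit $\{v, v'\}$ together with its incident edges into a $0$-critical (or $0$-edge-critical) symmetric set and contradicting the tightness of $G$.
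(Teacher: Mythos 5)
There is a genuine gap, and it begins with the quantifier. As the lemma is used in Lemma \ref{lem: c_2 three edges redu} and as the paper's proof establishes it, the claim is existential: there is \emph{some} non-adjacent pair in $N(v)$ avoiding all $0$-critical and $3$-edge-critical sets, and whose orbit avoids all $1$-critical and $4$-edge-critical sets. You read it universally and try to derive a contradiction from ``$A\ni x_1,x_2$ with $k_A=0$'' for an \emph{arbitrary} non-adjacent pair. That universal statement is false: take $V=\{v,x,y,z\}$ plus primed copies, edges $vx,vy,vz$ and their images, two loops at each of $x,y$ and one loop at $z$ (and images). This graph is $C_2$-tight, $v$ is a node with $N(v)\cap N(v')=\emptyset$ and $xy\notin E$, yet $\{x,y\}$ is $0$-critical. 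Accordingly your contradiction never closes: after forming the symmetric $0$-critical set $A\cup A'$ containing $x_1,x_2,x_1',x_2'$, adjoining $v,v'$ and the four edges $vx_1,vx_2,v'x_1',v'x_2'$ yields a set that exactly meets the bound $i_{E+L}=2|V'|$ --- no violation --- unless the third neighbour $z$ also lies in $A\cup A'$, which you cannot guarantee (you flag this yourself). The paper instead shows that at most one pair can lie in a $0$-critical set, and that the various combinations of $0$-critical and $3$-/$4$-edge-critical sets on different pairs cannot coexist, via a case analysis on the number of edges induced on $N(v)$ (those edges supply the $d(\cdot,\cdot)\geq 1$ terms that make the counting in Lemma \ref{critcapcup} bite); it then \emph{selects} a good pair. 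None of that case analysis appears in your proposal, and the $3$-edge-critical case is not ``analogous'' --- it is the bulk of the work, requiring the interplay between $k$, $\Bk$ and the loop counts $l_U$, $l_{X\cap U}$.

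Two further points. Your reduction of the $1$-critical case to the $0$-critical case (via $k_{B\cup B'}+k_{B\cap B'}+d(B,B')=2$ and parity) is correct and is exactly the paper's opening move, but it only relocates the burden onto the broken first part. For the $4$-edge-critical case the parity chase lands in the wrong place: any symmetric set on at least two vertices has even edge-critical value at least $4$, so $8=\Bk_{B\cup B'}+\Bk_{B\cap B'}+d(B,B')$ forces both $B\cup B'$ and $B\cap B'$ to be $4$-edge-critical, not ``$0$- or $2$-edge-critical''; this gives no immediate contradiction and is in fact only the paper's starting point (it shows $4$-edge-critical sets may be taken symmetric, after which more work is needed). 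Finally, Remark \ref{rem: no 0c and 4e} is proved \emph{inside} the proof of this lemma, so invoking it as an external input is circular.
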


\begin{proof}
We split up the proof based on the number of edges of $G$ induced by $N(v)$. 
If three edges were present $G[N(v)\cup\{v\}] \cong K_4$ which is not sparse, so we may assume $xy\notin E$.
Suppose the pair $x,y$ is not in a $0$-critical set and suppose there exists a $1$-critical set $W$ containing $x, y, x', y'$.
Then $W \cup W'$ and $W \cap W'$ are both $C_2$-symmetric so have even criticality since $G$ is $C_2$-tight.
By Lemma \ref{critcapcup} one must be $0$-critical, a contradiction since both contain $x, y$.
Hence we know that if $x, y \in N(v)$ are not in a $0$-critical set, then $x, y, x', y'$ are not in a $1$-critical set.

\emph{Case 1:} If there are two edges on the vertices of $N(v)$, then $x,y$ cannot be contained in a $3$-edge-critical subset and $x,y,x',y'$ are not in a $4$-edge critical subset as these sets with $v, z$ and $v, z, v', z'$ added respectively would not be sparse.
It's easy to see that there is no $0$-critical set on $x,y$, and there is no $1$-critical set on $x,y,x',y'$ from the paragraph above, finishing Case 1.

\emph{Case 2:} Suppose exactly one edge, say $xz$, is present on the vertices of $N(v)$.
First assume $x,y$ is contained in a $0$-critical set, $X$.  Then it is easy to check that $y, z$ is not in a $0$-critical set.
Assume there exists a $3$-edge-critical set $U$ on $y,z$.
This means there is $l_U\in \{0,1,2,3\}$ loops on the vertices of $U$, with $U$ being $(3-l_U)$-critical.
By Lemma \ref{critcapcup}, $$k_{X \cup U} = k_X + k_U - k_{X \cap U} - d(X,U).$$
We know $k_X = 0$, $X \cap U$ is at least $2$-edge-critical and $X \cap U \subset U$ so there are $l_{X \cap U}\leq l_U$ loops on the vertices of $X \cap U$.
Therefore, $$k_U - k_{X \cap U} = 3-l_U - \Bk_{X\cap U}+l_{X\cap U} \leq 1,$$ and $xz$ gives that $d(X,U)$ is at least 1, so $X\cup U$ is $0$-critical. But then $X\cup U +\{v\}$ is not sparse in $G$, a contradiction.

To show $y,z,y',z'$ is not contained in a $4$-edge-critical set, first notice that $X \cup X'$ is $0$-critical, containing $x,y,x',y'$. Relabel this union as $X$.
Assume there is a $4$-edge-critical set $W$ on $y,z,y',z'$.
We take the symmetric sets $W \cup W'$ and $W \cap W'$, which by symmetry have even edge-criticality.
Additionally, $y,z,y',z' \in W \cap W'$, so that  $\Bk_{W \cap W'} \geq 3$.  Hence by Lemma \ref{critcapcup}, $\Bk_{W \cup W'} + \Bk_{W \cap W'} \leq \Bk_{W} + \Bk_{W'} = 8$, and so $\Bk_{W \cap W'},\Bk_{W \cup W'} = 4$.
As a result of this, we may always take 4-edge-critical sets of $C_2$-tight graphs to be symmetric (in particular we may assume $W$ is symmetric).

There are either 0, 2, or 4 loops on the vertices of $W$ (by symmetry and $G$ being $C_2$-tight), with $W$ being 4, 2, or 0-critical respectively in those cases.
Again, since $k_{X \cup W} = k_X + k_W - k_{X \cap W} - d(X,W)$, any looped vertex of $W$ is also a vertex of $X \cap W$, giving $k_W - k_{X \cap W} = 0$ ($X \cap W$ contains at least 2 vertices and is symmetric, so has at least 4 for its edge-critical value) and $X \cup W \cup \{v\}$ is not sparse. 
Hence the lemma holds in this case with $x_1, x_2 = y,z$.

Now assume neither $x,y$ nor $y,z$ are contained in $0$-critical sets.
If both $x,y,x',y'$ and $y,z,y',z'$ are in $4$-edge-critical sets $W_1$, $W_2$, then we may suppose $W_1$ and $W_2$ are symmetric as before.
Both $W_1 \cup W_2$ and $W_1 \cap W_2$ are  $C_2$-symmetric and have at least $2$ vertices, so are also both $4$-edge-critical.
In particular, $W_1 \cup W_2 \cup \{v,v'\}$ violates the sparsity of $G$.
When only one of $x,y,x',y'$ and $y,z,y',z'$ are in a $4$-edge-critical set, say $x,y,x',y'$ in $W$, assume there exists a $3$-edge-critical set $U$ containing $y,z$.
Lemma \ref{critcapcup} for edge-criticality says  
$$\Bk_{U \cup W} = \Bk_U + \Bk_W - \Bk_{U \cap W} - d(U,W).$$ 
Since $\Bk_{U \cap W} \geq 2$ and $d(U,W) \geq 1$ (due to the presence of the edge $xz$), we have $\Bk_{U \cup W} \leq 4$.
Similarly, $\Bk_{U \cup U' \cup W} \leq 4$, but adding $v, v'$ contradicts the sparsity of $G$.
Finally, if no such $4$-edge-critical set exists, $x,y$ and $y,z$ cannot both  be in 3-edge-critical sets, say $U_1, U_2$, by considering $U_1 \cup U_2 \cup \{v\}$, as $xz$ again gives $d(U_1,U_2) \geq 1$.
Hence the lemma holds when one edge is present on the neighbours of $v$.

\emph{Case 3:} Lastly suppose there are no edges induced by the vertices of $N(v)$.
It is easy to show that there exists a $0$-critical set on at most one pair of neighbours of $v$, while there can be 3-edge-critical sets on at most two pairs.

First assume $x,y$ is contained in a $0$-critical set, $X$.
Assume $x,z$ and $y,z$ are contained in $3$-edge-critical sets $U_1, U_2$ and note that $(U_1 \cup U_2) \cap X$ and any supersets thereof (since they contain $x,y$) cannot be 3-edge-critical else all 3 pairs of neighbours of $v$ are in 3-edge critical sets.
Consider the equations,
\begin{equation}\label{eq: bar equality}
    \Bk_{U_1 \cup U_2} = \Bk_{U_1} + \Bk_{U_2} - \Bk_{U_1 \cap U_2} - d(U_1,U_2)
\end{equation}
\begin{equation}\label{eq: no bar equality}
    k_{(U_1 \cup U_2)\cup X} = k_{U_1 \cup U_2} + k_{X} - k_{(U_1 \cup U_2) \cap X} - d(U_1 \cup U_2, X).
\end{equation}
A contradiction occurs when  
$$k_{U_1 \cup U_2} \leq k_{(U_1 \cup U_2) \cap X} + d(U_1 \cup U_2, X),$$ 
as this would violate the sparsity of $G$.
Let $l_a \geq l_b$ count the number of loops on vertices of $U_1 \cup U_2, (U_1 \cup U_2)\cap X$ respectively.
Equation (\ref{eq: bar equality}) along with the inequalities $\Bk_{U_1 \cup U_2} \geq 4$ and $\Bk_{U_1 \cap U_2} \geq 2$ gives that these two inequalities are in fact equalities.
Then, since $\Bk_{(U_1 \cup U_2) \cap X} \geq 4 = \Bk_{(U_1 \cup U_2)}$, we have $$k_{(U_1 \cup U_2) \cap X} = \Bk_{(U_1 \cup U_2) \cap X} - l_b \geq \Bk_{(U_1 \cup U_2)} - l_a = k_{(U_1 \cup U_2)},$$ 
which, with Equation (\ref{eq: no bar equality}), gives the desired contradiction.
So there is a pair $x_1, x_2$ not contained in a 0-critical or 3-edge critical set.
By Remark \ref{rem: no 0c and 4e}, there is no 4-edge-critical or 1-critical set containing $x_1,x_2,x_1',x_2'$, completing this case.

Finally, suppose there is no 0-critical set on any pair of neighbours of $N(v)$.
We know, from above, that at most one pair of neighbours of $v$ with their symmetric copies can be contained in a 4-edge-critical set.
Hence in this case we only obtain a contradiction to the lemma if say $x, y, x', y'$ is in a 4-edge-critical set $W$ while $x, z$ and $y, z$ are in 3-edge-critical sets, say $U_1$ and $U_2$ respectively (see Figure \ref{fig: U_1 U_2 W}).
Recall that $x,y$ cannot also be in a 3-edge-critical set and since $\Bk_{U_1 \cap U_2} \geq 2$ in Equation (\ref{eq: bar equality}), we have $\Bk_{U_1 \cup U_2} = 4$.
Further, $x,y \in (U_1 \cup U_2)\cap W$, so $\Bk_{(U_1 \cup U_2) \cup W} = 4$.
Now instead of considering the 3-edge-critical sets $U_1$ and $U_2$ with the 4-edge-critical set $W$, consider the 3-edge-critical  sets $U_1'$ and $U_2'$ with the 4-edge-critical set $(U_1 \cup U_2) \cup W$. The same methods show $\Bk_{(U_1 \cup U_2) \cup W \cup (U_1' \cup U_2')}= 4$.
However the set $(U_1 \cup U_2) \cup W \cup (U_1' \cup U_2')$ with $\{v,v'\}$ added is not sparse.
This exhausts all cases, completing the proof.
\end{proof}

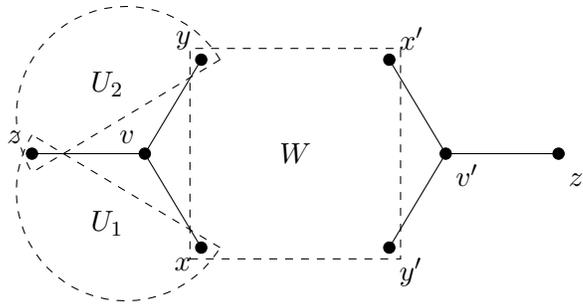
\begin{figure}[ht]
    \centering
    \begin{tikzpicture}
  [scale=.5,auto=left]
  
  \coordinate (n1) at (0,3);
  \coordinate (n2) at (3,3);
  \coordinate (n3) at (4.5,0.5);
  \coordinate (n4) at (4.5,5.5);
  \coordinate (n5) at (9.5,0.5);
  \coordinate (n6) at (9.5,5.5);
  \coordinate (n7) at (11,3);
  \coordinate (n8) at (14,3);
  \coordinate (n9) at (7,3);
  \coordinate (n10) at (2,1.15);
  \coordinate (n11) at (2,4.85);

 \draw[dashed] (4.2,0.2) rectangle (9.8,5.8);
 \draw[fill=black] (n1) circle (0.15cm)
	    node[above left] {$z$};
 \draw[fill=black] (n2) circle (0.15cm)
	    node[above left] {$v$};
 \draw[fill=black] (n3) circle (0.15cm)
	    node[below left] {$x$};
 \draw[fill=black] (n4) circle (0.15cm)
	    node[above left] {$y$};
 \draw[fill=black] (n5) circle (0.15cm)
	    node[below right] {$y'$};
 \draw[fill=black] (n6) circle (0.15cm)
	    node[above right] {$x'$};
 \draw[fill=black] (n7) circle (0.15cm)
	    node[below right] {$v'$};
 \draw[fill=black] (n8) circle (0.15cm)
	    node[below right] {$z'$};
 \draw (n9) node {$W$};
 \draw (n10) node {$U_1$};
 \draw (n11) node {$U_2$};
 \draw[dashed] (5,5.5) arc (30.964:210.964:2.9155);
 \draw[dashed] (5,5.5) -- (0,2.5);
 \draw[dashed] (0,3.5) arc (149.036:329.036:2.9155);
 \draw[dashed] (5,0.5) -- (0,3.5);

  \foreach \from/\to in {n1/n2,n2/n3,n2/n4,n5/n7,n6/n7,n7/n8} 
    \draw (\from) -- (\to);

\end{tikzpicture}
    \caption{Diagram of part of a $C_2$-tight graph with hypothetical 3-edge-critical sets $U_1, U_2$ and 4-edge-critical set $W$, with $x,z \in U_1$, $y,z \in U_2$, $x,y,x',y' \in W$.}
    \label{fig: U_1 U_2 W}
\end{figure}

\begin{lem}\label{lem: c2 x=z'}
Let $(G,\phi)$ be $C_2$-tight and suppose $v \in V$ is a node such that $N(v) = \{x,y,x'\}$ and $N(v) \cap N(v') = \{x,x'\}$.
For a pair $(x_1,x_2) \in \{(x,y),(x',y)\}$, 
with $x_1 x_2\notin E$, we have:\\
$x_1, x_2$ is not contained in any $0$-critical set or any $3$-edge-critical set; and \\
$x_1, x_2, x_1', x_2'$ is not contained in any $1$-critical set or any $4$-edge-critical set.
\end{lem}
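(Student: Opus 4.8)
The claim is the exact analogue of Lemma~\ref{lem: c2 0,1,3,4 crit} in the special case where the node $v$ sits in a ``$2$-fold'' orbit configuration: $N(v)=\{x,y,x'\}$, so that the orbit of the edge $vx$ meets itself (as $x'=\gamma x$ is a neighbour of $v$ and, symmetrically, $x\in N(v')$), while the third neighbour $y$ lies in a singleton position relative to this overlap. The plan is to mimic the three-case split of Lemma~\ref{lem: c2 0,1,3,4 crit} according to how many edges are induced by $N(v)=\{x,y,x'\}$, but to exploit the extra structure: the pairs under consideration are only $(x,y)$ and $(x',y)$, and these two pairs are symmetric images of one another. So it suffices to rule out, for one of them, the existence of a $0$-critical (resp.\ $3$-edge-critical) set containing that pair, together with ruling out a $1$-critical (resp.\ $4$-edge-critical) set containing $x,y,x',y'$; by symmetry the statement for the other pair follows from the same argument applied to $\gamma$ of everything.

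First I would dispose of the edges among $N(v)$: if all of $xy$, $xx'$, $yx'$ were present then $G[N(v)\cup\{v\}]$ contains $K_4$, violating sparsity; and in fact $xx'\in E$ would be a fixed edge, which is excluded by the $C_2$-tight hypothesis ($e_2=0$), so at most the two edges $xy$ and $yx'$ can be present, and these come as a symmetric pair. This already collapses the casework relative to the non-overlapping version: either both of $xy,yx'$ are present, or neither is. I would then reuse verbatim the opening observation of Lemma~\ref{lem: c2 0,1,3,4 crit}: if $x,y$ is \emph{not} in a $0$-critical set, then $x,y,x',y'$ is not in a $1$-critical set — the argument (take $W$ $1$-critical, intersect and union with $W'$, use Lemma~\ref{critcapcup} and the fact that symmetric sets have even criticality in a $C_2$-tight graph to force one of $W\cup W',W\cap W'$ to be $0$-critical while both contain $x,y$) goes through unchanged since it never used that $N(v)\cap N(v')=\emptyset$. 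The same passage with $\Bk$ handles the reduction from $4$-edge-critical sets: a $4$-edge-critical set on $x,y,x',y'$ can be taken symmetric, because $W\cap W'$ contains the pair so has $\Bk\geq 3$, hence $\Bk_{W\cap W'}=\Bk_{W\cup W'}=4$ by Lemma~\ref{critcapcup}.

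The heart of the argument is then the ``$0$-critical on $x,y$'' and ``$3$-edge-critical on $x,y$'' cases, handled essentially as in Lemma~\ref{lem: c2 0,1,3,4 crit} but now with only one genuinely free pair. If $x,y$ lies in a $0$-critical set $X$, then $X\cup X'$ is $0$-critical and symmetric and contains $x,y,x',y'$; adding $v$ (one extra vertex, edges $vx,vy$ — and possibly more, but at least these two, all going to $X\cup X'$) must not violate sparsity, which already constrains which of $vx,vy,vx'$ can lie inside. One then runs the standard union/intersection estimate via Lemma~\ref{critcapcup}: any hypothetical $3$-edge-critical $U$ (resp.\ $4$-edge-critical $W$) on the symmetric image pair $x',y$ combines with $X\cup X'$ so that $k_{U\cup(X\cup X')}$ drops to $0$, and then $U\cup(X\cup X')\cup\{v,v'\}$ (or the appropriate set plus $\{v\}$) is not sparse; the edge count $d(\,\cdot\,,\,\cdot\,)\geq 1$ is supplied by an induced edge of $N(v)$ (in the two-edge subcase) or, in the edgeless subcase, by an appeal to Remark~\ref{rem: no 0c and 4e} to close out the $4$-edge-critical/$1$-critical possibility once the $0$-critical and $3$-edge-critical ones are excluded. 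Finally, when no pair of $\{x,y,x'\}$ lies in a $0$-critical set, one shows the two pairs $(x,y)$ and $(x',y)$ cannot both lie in $3$-edge-critical sets (resp.\ both in $4$-edge-critical sets): take symmetric representatives $U_1\ni x,y$ and $U_2=U_1'\ni x',y$, apply Lemma~\ref{critcapcup} to $U_1,U_2$ — since $y\in U_1\cap U_2$ forces $\Bk_{U_1\cap U_2}\geq 2$ — to get $\Bk_{U_1\cup U_2}\leq 4$, and then $U_1\cup U_2\cup\{v\}$ (or $\cup\{v,v'\}$ in the $4$-edge-critical case) contradicts sparsity; the mixed case (one pair $4$-edge-critical, the other $3$-edge-critical) is handled by the same bootstrap used at the end of Lemma~\ref{lem: c2 0,1,3,4 crit}, replacing $W$ by $(U_1\cup U_2)\cup W$ and iterating once more with primed sets.

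\textbf{Main obstacle.} I expect the delicate point to be bookkeeping the loop counts and the overlap $\{x,x'\}=N(v)\cap N(v')$ simultaneously: because $v$ and $v'$ share two neighbours, the sets $X\cup X'\cup\{v,v'\}$ and $U_1\cup U_2\cup\{v,v'\}$ acquire \emph{four} new edges per added vertex pair only if both $vx,vx'$ are genuinely ``new'' relative to the set, and one must check carefully that $v$'s loop-or-third-edge (the edge $vy$ and whatever sits at $v$) is accounted for exactly once. Equivalently, the sparsity violation at the end must be verified as $i_{E+L}(S\cup\{v,v'\})\geq 2|S\cup\{v,v'\}|+1$, and getting that strict inequality requires knowing precisely how many of the six potential edges $vx,vy,vx',v'x,v'y,v'x'$ and the at-most-two loops at $v,v'$ fall inside $S$ — this is where the hypothesis $N(v)\cap N(v')=\{x,x'\}$ (rather than $\emptyset$) changes the count and must be tracked, but it does not change the final contradiction.
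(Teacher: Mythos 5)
Your overall strategy (case split on the edges induced by $N(v)$, union/intersection estimates via Lemma~\ref{critcapcup}, symmetrisation, sparsity violations after re-adding $v,v'$) is the same as the paper's, but the proposal contains a concrete error about how $C_2$ acts on the relevant objects, and this error propagates into the two places where the real work happens. Since $\gamma$ sends $x\mapsto x'$ and $y\mapsto y'$, the orbit of the edge $xy$ is $\{xy,\,x'y'\}$, not $\{xy,\,x'y\}$. So your dichotomy ``either both of $xy,x'y$ are present or neither is'' is wrong: the correct split (and the one the paper uses) is that either $xy,x'y'\in E$ and $x'y,xy'\notin E$ (up to relabelling), or none of the four edges is present. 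Your version is doubly problematic: the case ``both $xy$ and $x'y$ present'' would leave no admissible pair $(x_1,x_2)$ at all, so Lemma~\ref{lem: c_2 three edges redu} could not proceed (in fact that case is impossible, since it forces all four edges to be present and then $\{v,v',x,y,x',y'\}$ carries $10>2\cdot 6-3$ edges, but you neither rule it out nor handle the genuine case $xy,x'y'\in E$, $x'y\notin E$, where one must take the pair $(x',y)$ and use the two cross edges $xy$ and $x'y'$ to get $d(U,U')\geq 2$ for a hypothetical $3$-edge-critical $U\ni x',y$). The same confusion reappears in ``$U_2=U_1'\ni x',y$'': the set $U_1'$ contains $x',y'$, not $y$, so $y\in U_1\cap U_2$ is not available and the bound $\Bk_{U_1\cap U_2}\geq 2$ fails as written. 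The paper instead takes $U_1\ni x,y$ and $U_2\ni x',y$ as two a priori unrelated $3$-edge-critical sets, uses $y\in U_1\cap U_2$ to force $\Bk_{U_1\cup U_2}=4$, and only then symmetrises $Y=U_1\cup U_2$ against $Y'$ using $x,x'\in Y\cap Y'$ and parity to get $\Bk_{Y\cap Y'}\geq 4$, whence $\Bk_{Y\cup Y'}\leq 4$ and $Y\cup Y'\cup\{v,v'\}$ breaks sparsity.

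You have also missed the simplification that makes this lemma much shorter than Lemma~\ref{lem: c2 0,1,3,4 crit}: because $N(v)\cap N(v')=\{x,x'\}$, we have $N(v)\cup N(v')=\{x,y,x',y'\}$, so \emph{any} symmetric set containing these four vertices absorbs all six edges at $v$ and $v'$ when those two vertices are added. Hence a $0$-critical set on either pair (symmetrise it), or a $1$-critical or $4$-edge-critical set on $x,y,x',y'$, is immediately contradictory --- there is no need for the Case-2-style bootstrap of Lemma~\ref{lem: c2 0,1,3,4 crit} that you plan to import, and indeed your own remark that ``adding $v$ \dots already constrains which of $vx,vy,vx'$ can lie inside'' stops just short of noticing that all three lie inside, killing that case outright. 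What remains after this observation is only the $3$-edge-critical analysis, which is exactly where your symmetry error sits. So the proposal needs both the corrected orbit structure and the completed $3$-edge-critical argument before it constitutes a proof.
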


\begin{proof}
Without loss of generality, one of the following hold:
\begin{enumerate}
    \item $xy, x'y' \in E$, $x'y, xy' \notin E$.
    \item $xy, x'y, xy', x'y' \notin E$.
\end{enumerate}
The edge sets described in (1) and (2) describe all possibilities when $N(v) \cap N(v') = \{x,x'\}$.
Suppose the edges present are as in (1) or (2), and at least one of the following exists: (i) there exists $X$ with $x',y \in X$ which is $0$-critical; or (ii) there exists $W$ with $x',y, x, y' \in W$ which is 1-critial or 4-edge-critical.
Noting that $X\cup X'$ is $0$-critical and contains all the neighbours of $v$ and $v'$, as with such a $W$, $X \cup \{v,v'\}$ and $W \cup \{v,v'\}$ break sparsity of $G$ immediately.

Hence for the remainder of the proof, we need only consider 3-edge-critical sets.
First in case (1), let $U$ be 3-edge-critical with $x',y \in U$.
From Lemma \ref{critcapcup}, 
$$\Bk_{U \cup U'} \leq \Bk_U + \Bk_{U'} - d(U,U') = 3 + 3 -2 =4.$$ 
Then $U \cup U' \cup \{v,v'\}$ violates the sparsity of $G$, completing the first case.
Now assume we have no edges as in (2).
We want to show that we can add either $xy,x'y'$ or $x'y,xy'$ to $G-\{v,v'\}$.
Suppose there exists two 3-edge-critical sets, $U_1, U_2$, with $x,y \in U_1$ and $x',y \in U_2$.
Relabel $Y:= U_1 \cup U_2$.
Note that $Y$ is not 3-edge-critical else $Y + \{v\}$ is not sparse.
On the other hand $\Bk_{U_1 \cap U_2} \geq 2$, so by counting $\Bk_{Y} \leq 4$, hence $Y$ must be 4-edge-critical.
Symmetrically, $\Bk_{Y'} = 4$.
Then as $x,x' \in Y \cap Y'$, $\Bk_{Y \cap Y'} \geq 3$, and since $Y \cap Y'$ is symmetric, the number of edges on the set must be even, so $\Bk_{Y \cap Y'} \geq 4$. Hence 
$$\Bk_{Y \cup Y'} = \Bk_Y + \Bk_{Y'} - \Bk_{Y \cap Y'} - d(Y,Y') \leq 4+4-4=4, $$
so adding $\{v,v'\}$ to $Y\cup Y'$ breaks sparsity of $G$.
\end{proof}

\begin{figure}[ht]
    \centering
    \begin{tikzpicture}
  [scale=.4,auto=left]
	    
  \coordinate (n21) at (10,8);
  \coordinate (n22) at (12.5,2.5);
  \coordinate (n23) at (12.5,5.5);
  \coordinate (n24) at (15.5,5.5);
  \coordinate (n25) at (18,0);
  \coordinate (n26) at (12.5,2.5);
  \coordinate (n27) at (15.5,2.5);
  \coordinate (n28) at (15.5,5.5);
  \coordinate (n29) at (14,4);
  
 \draw[dashed, fill=lightgray] (n29) circle (4cm); 
 \draw[fill=black] (n21) circle (0.15cm)
	    node[left] {$v$};
 \draw[fill=black] (n22) circle (0.15cm)
	    node[below left] {$x$};
 \draw[fill=black] (n23) circle (0.15cm)
	    node[below right] {$y$};
 \draw[fill=black] (n24) circle (0.15cm)
	    node[above right] {$x'$};
 \draw[fill=black] (n25) circle (0.15cm)
	    node[right] {$v'$};
 \draw[fill=black] (n26) circle (0.15cm);
 \draw[fill=black] (n27) circle (0.15cm)
	    node[above left] {$y'$};
 \draw[fill=black] (n28) circle (0.15cm);

  \foreach \from/\to in { n21/n22,n21/n23,n21/n24,n25/n26,n25/n27,n25/n28,n23/n24,n26/n27} 
    \draw (\from) -- (\to);

  \coordinate (n31) at (20,8);
  \coordinate (n32) at (22.5,2.5);
  \coordinate (n33) at (22.5,5.5);
  \coordinate (n34) at (25.5,5.5);
  \coordinate (n35) at (28,0);
  \coordinate (n36) at (22.5,2.5);
  \coordinate (n37) at (25.5,2.5);
  \coordinate (n38) at (25.5,5.5);
  \coordinate (n39) at (24,4);
  
 \draw[dashed, fill=lightgray] (n39) circle (4cm); 
 \draw[fill=black] (n31) circle (0.15cm)
	    node[left] {$v$};
 \draw[fill=black] (n32) circle (0.15cm)
	    node[below left] {$x$};
 \draw[fill=black] (n33) circle (0.15cm)
	    node[below right] {$y$};
 \draw[fill=black] (n34) circle (0.15cm)
	    node[above right] {$x'$};
 \draw[fill=black] (n35) circle (0.15cm)
	    node[right] {$v'$};
 \draw[fill=black] (n36) circle (0.15cm);
 \draw[fill=black] (n37) circle (0.15cm)
	    node[above left] {$y'$};
 \draw[fill=black] (n38) circle (0.15cm);

  \foreach \from/\to in { n31/n32,n31/n33,n31/n34,n35/n36,n35/n37,n35/n38} 
    \draw (\from) -- (\to);

\end{tikzpicture}
    \caption{The local structure of cases (1) and (2) in Lemma \ref{lem: c2 x=z'}.}
    \label{fig CI1.5}
\end{figure}
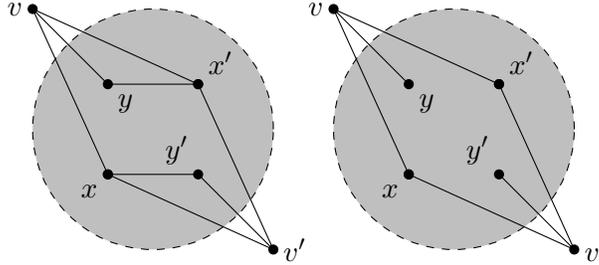

\begin{lem}\label{lem: c_2 three edges redu}
Let $(G,\phi)$ be a $C_{2}$-tight graph containing a node $v$ with three distinct neighbours.
Then $G \setminus \{v,v'\} + \{x_1 x_2, x_1'x_2'\}$ is $C_{2}$-tight for some $x_1, x_2 \in N(v)$.
\end{lem}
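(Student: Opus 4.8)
The plan is to perform a symmetrised $1$-reduction at $v$ and check that it preserves $C_2$-tightness. First I would record that $v \neq v'$: a vertex fixed by $c_2$ is the origin, which by Table~\ref{table:fixed} carries two loops and no incident fixed edge, so all its incident edges come in orbits of size two and its degree is even, contradicting $d_G(v)=3$. Hence deleting the orbit $\{v,v'\}$ removes exactly two vertices and six distinct edges (there is no loop at $v$ or $v'$, and no edge $vv'$ since $e_2=0$), while the added orbit $\{x_1x_2, x_1'x_2'\}$ contributes two edges provided $\{x_1,x_2\}\neq\{x_1',x_2'\}$. As $G$ is tight this gives $|E'|+|L'|=2|V'|$, and since no loops or fixed edges are created or destroyed (the new orbit has size two), the fixed-count conditions of Table~\ref{table:fixed} survive. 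Thus $G':=G-\{v,v'\}+\{x_1x_2,x_1'x_2'\}$ is $C_2$-tight as soon as it is sparse.

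Next I would translate sparsity of $G'$ into a statement about $G$. A counting argument in the style of Lemma~\ref{critcapcup} shows that $G'$ fails to be sparse precisely when $\{x_1,x_2\}$ lies in a $0$-critical or $3$-edge-critical subset of $G$, or $\{x_1,x_2,x_1',x_2'\}$ lies in a $1$-critical or $4$-edge-critical subset of $G$. So the task reduces to producing a pair $x_1,x_2\in N(v)$ with $x_1x_2\notin E$, orbit of size two, that avoids all four of these configurations. I would then split on $N(v)\cap N(v')$, which is $c_2$-invariant. If it is empty, Lemma~\ref{lem: c2 0,1,3,4 crit} supplies such a pair directly (and $x_1'\neq x_2$, else $x_2\in N(v)\cap N(v')$, so the orbit has size two). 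If $N(v)\cap N(v')$ is a two-element orbit $\{x,x'\}$, then $N(v)=\{x,y,x'\}$ with $y$ not fixed, and Lemma~\ref{lem: c2 x=z'} supplies a pair from $\{(x,y),(x',y)\}$ with orbit of size two; we avoid the pair $\{x,x'\}$ exactly because $xx'$ would be a fixed edge, which $e_2=0$ forbids. The remaining possibility is that $N(v)\cap N(v')$ contains the fixed origin vertex $a$ (so $v_2=1$ and $a$ carries two loops): either $N(v)=\{a,y,z\}$ with $N(v)\cap N(v')=\{a\}$ and $y,z,y',z'$ distinct, or $N(v)=N(v')=\{a,b,b'\}$.

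These last two sub-cases are the main obstacle, since they are not covered by the prepared reduction lemmas; I would handle them by hand, imitating the proofs of Lemmas~\ref{lem: c2 0,1,3,4 crit} and~\ref{lem: c2 x=z'} but keeping track of the two loops at $a$ whenever they fall inside a critical set. In the case $N(v)=N(v')=\{a,b,b'\}$ there is no freedom: the pair $\{b,b'\}$ yields the fixed edge $bb'$ (forbidden), so one must use $\{a,b\}$ and add $\{ab,ab'\}$; here $ab\notin E$ is forced, since otherwise $\{a,b,b',v,v'\}$ would induce $8>2\cdot5-3$ edges, violating the simple-subgraph count. One then rules out a $0$-critical (resp.\ $3$-edge-critical, $1$-critical, $4$-edge-critical) set $X\supseteq\{a,b\}$ (resp.\ $\supseteq\{a,b,b'\}$) by adjoining $v,v'$, symmetrising via $X\mapsto X\cup X'$, and extracting a contradiction from Lemma~\ref{critcapcup} together with the observation that all six edges at $v,v'$ then lie inside $X\cup X'\cup\{v,v'\}$. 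In the case $N(v)\cap N(v')=\{a\}$ the three candidate pairs $\{a,y\},\{a,z\},\{y,z\}$ all give orbits of size two, and a parallel argument shows they cannot be simultaneously blocked. Collecting the cases yields a valid pair $x_1,x_2$ in every situation, completing the reduction.
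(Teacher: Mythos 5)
Your proposal follows the same overall route as the paper's proof: perform the symmetrised $1$-reduction, translate failure of sparsity of the reduced graph into the existence of a $0$-critical or $3$-edge-critical set on the chosen pair (or a $1$-critical or $4$-edge-critical set on its orbit), and split on $N(v)\cap N(v')$, invoking Lemma~\ref{lem: c2 0,1,3,4 crit} when the intersection is empty and Lemma~\ref{lem: c2 x=z'} when it is an orbit $\{x,x'\}$. Where you genuinely diverge is in the treatment of fixed vertices, and here your version is the more careful one. The paper's proof opens by asserting that a $C_2$-tight graph has no fixed vertices and on that basis excludes $|N(v)\cap N(v')|\in\{1,3\}$; but Table~\ref{table:fixed} explicitly permits $v_2=1$ with two fixed loops at the origin, and that vertex can be adjacent to a degree-$3$ node (one can build small $C_2$-tight examples where this happens), so the intersection patterns $\{a\}$ and $\{a,b,b'\}$ do occur and are not covered by Lemmas~\ref{lem: c2 0,1,3,4 crit} and~\ref{lem: c2 x=z'}. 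You correctly isolate these as extra sub-cases. Your handling of $N(v)=N(v')=\{a,b,b'\}$ is convincing: the pair is forced to be $\{a,b\}$, the non-edge $ab\notin E$ follows from the simple-subgraph count on $\{a,b,b',v,v'\}$, and each critical-set obstruction dies after symmetrising and applying Lemma~\ref{critcapcup}, since any offending set then swallows all six edges at $v,v'$. The one incomplete piece is the sub-case $N(v)\cap N(v')=\{a\}$, which you dispatch with ``a parallel argument shows they cannot be simultaneously blocked''; the claim is believable (for instance, $\{y,z\}$ can never lie in a $0$-critical set because $\{a\}$ is itself $0$-critical and disjoint unions of $0$-critical sets are $0$-critical, so such a set plus $a$ plus $v$ would violate sparsity), but it needs to be written out with the same care as Case~3 of Lemma~\ref{lem: c2 0,1,3,4 crit}, paying attention to how the two loops at $a$ shift the relation between $k_X$ and $\Bar{k}_X$. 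In short: your argument is at least as strong as the paper's and in fact repairs an oversight in it; finish the $|N(v)\cap N(v')|=1$ sub-case in full and the proof is complete.
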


\begin{proof}
From the definition of a $C_2$-tight graph, we know there are no fixed vertices or edges in $G$.
Therefore $vv' \notin E$ and $|N(v) \cap N(v')|$ cannot be 1 or 3, as this would require a fixed vertex.
Lemmas \ref{lem: c2 0,1,3,4 crit} and \ref{lem: c2 x=z'} that for the remaining possible cases, that is when $N(v) \cap N(v') = \emptyset, \{x,x'\}$ the reduction will preserve sparsity, as required.
\end{proof}

\begin{lem}\label{lem: c_2 2 edges 1 loop redu}
Let $(G,\phi)$ be a $C_{2}$-tight graph.
Suppose $v$ is a node adjacent to distinct vertices $\{x,y\}$, and $v$ is incident to a loop.
There exists some $x_1 \in \{x,y\}$ such that $G \setminus \{v,v'\} + \{(x_1,x_1),(x_1',x_1')\}$ is $C_2$-tight.
\end{lem}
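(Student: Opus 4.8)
The goal is a looped 1-reduction at a node $v$ carrying a loop and two further edges $vx$, $vy$. The plan is to show that for at least one choice of $x_1\in\{x,y\}$, adding the loop orbit $\{(x_1,x_1),(x_1',x_1')\}$ to $G\setminus\{v,v'\}$ keeps the graph sparse; tightness is then automatic since we remove $2$ vertices, $4$ edges (two edges and one loop per each of $v,v'$) and add $2$ loops, so the count $|E|+|L|=2|V|$ is preserved. First I would record, exactly as in the proof of Lemma~\ref{lem: c_2 three edges redu}, that $C_2$-tightness forbids fixed vertices, edges and (in this configuration) loops, so $vv'\notin E$, the loop at $v$ is not fixed, and $N(v)\cap N(v')\in\{\emptyset,\{x\},\{y\}\}$; the cases $N(v)\cap N(v')=\{x\}$ or $\{y\}$ are impossible as they force a fixed vertex (note $v$ is a node with a loop, so it has exactly the two non-loop neighbours $x,y$, and $x=y'$ would make the shared neighbour fixed). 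Hence $N(v)\cap N(v')=\emptyset$.

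The obstruction to adding $(x_1,x_1)$ and $(x_1',x_1')$ to $H:=G\setminus\{v,v'\}$ is the existence of a $C_2$-symmetric set $Z\subseteq V(H)$ with $x_1,x_1'\in Z$ that is already $2$-critical in $H$ (equivalently $1$-critical with $x_1$ on one loop, or $0$-critical), since adding two more loops to such a $Z$ would break sparsity; but because any symmetric set has even criticality (using that a $C_2$-tight graph has either no fixed loops or exactly two fixed loops at a fixed vertex, as noted before Lemma~\ref{critcapcup}), the only dangerous case is a $C_2$-symmetric $0$-critical set in $G-\{v,v'\}$ (equivalently in $G$, since removing $v,v'$ removes no induced edges of such a set) containing $\{x_1,x_1'\}$. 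Equivalently, reusing the edge-critical language: adding $(x_1,x_1)$ fails precisely when $x_1$ lies in a $0$-critical set, or $x_1$ lies in a set $X$ that is $1$-critical with a loop at $x_1$; and the symmetrized obstruction is a symmetric set containing $x_1,x_1'$. So it suffices to prove: \emph{not both} $x$ and $y$ lie in $0$-critical sets of $G$ (and to handle the looped $1$-critical variant by the same union argument). I would run the standard submodularity argument: if $X\ni x$ and $Y\ni y$ are $0$-critical, then $X\cup X'$ and $Y\cup Y'$ are symmetric $0$-critical sets, and if they meet, Lemma~\ref{critcapcup} forces $X\cup X'\cup Y\cup Y'$ to be $0$-critical (all the criticalities being nonnegative and $d\ge 0$ forcing equalities), and then $(X\cup X'\cup Y\cup Y')\cup\{v\}$ violates sparsity, since adjoining $v$ adds the vertex $v$, the loop at $v$, and the two edges $vx,vy$, i.e. $3$ to the edge-plus-loop count against $2|{+}1|=2$ allowed. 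If they are disjoint, I argue directly that $X\cup X'\cup Y\cup Y'\cup\{v,v'\}$ is not sparse (it induces the loop orbit and edge orbit at $v,v'$), again a contradiction.

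The one subtlety — and the step I expect to be the main obstacle — is cleanly ruling out the case where only \emph{one} of $x,y$, say $x$, sits in a $0$-critical set (so we are forced to try $x_1=y$), while $y$ lies in a looped $1$-critical set $Y$ (a loop at $y$) or in a set that becomes obstructive only after symmetrization. Here I would combine: let $X\ni x$ be $0$-critical (symmetrized) and suppose $Y\ni y$ is $1$-critical with the loop at $y$; since $y\notin X$ by assumption, $X\cap Y$ omits $y$, so $X\cap Y$ has no loop forcing it to be at least $0$-critical with $\bar k$-value $\ge$ its $k$-value, and Lemma~\ref{critcapcup} gives $k_{X\cup Y}\le k_X+k_Y-k_{X\cap Y}-d(X,Y)\le 0+1-0-0$; if $X$ and $Y$ meet this makes $X\cup Y$ at most $1$-critical, and since it contains $x$ but the loop at $y$ it still obstructs adding $v$ (check: $X\cup Y\cup\{v\}$ has $k$-value dropping by $3$). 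If they are disjoint one treats $X\cup X'\cup Y\cup Y'\cup\{v,v'\}$ directly. In every sub-case the contradiction is that some symmetric set, after adjoining the orbit $\{v,v'\}$ together with the loop-plus-edge structure at $v$, fails the sparsity bound; I would organize this exactly in parallel with Lemmas~\ref{lem: c2 0,1,3,4 crit} and~\ref{lem: c2 x=z'} so that the looped variant is handled by the observation that a loop at a vertex behaves, for counting purposes, like half of a $0$-critical pair. This yields some $x_1\in\{x,y\}$ for which $G\setminus\{v,v'\}+\{(x_1,x_1),(x_1',x_1')\}$ is sparse, hence $C_2$-tight.
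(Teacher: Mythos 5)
Your overall strategy is the same as the paper's: show that at least one $x_1\in\{x,y\}$ lies in no $0$-critical set and that $\{x_1,x_1'\}$ lies in no $1$-critical set, using unions of critical sets together with $v$ to violate sparsity, and using the fact that $C_2$-symmetric sets have even criticality. However, the step you yourself flag as the main obstacle is resolved by a miscount. You claim that if $X\ni x$ is $0$-critical and $Y\ni y$ is $1$-critical, then $X\cup Y\cup\{v\}$ has its $k$-value ``dropping by $3$'' and hence obstructs. In fact adjoining $v$ adds one vertex ($+2$ to the bound $2|V'|$) against three new edges/loops ($vx$, $vy$ and the loop at $v$), so the criticality drops by exactly $1$: from a $1$-critical set one obtains a $0$-critical set, which is still sparse, and no contradiction results. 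The correct resolution --- which the paper uses and which you mention in passing but do not execute --- is to symmetrise: if $W\supseteq\{y,y'\}$ is $1$-critical, then by Lemma~\ref{critcapcup} applied to $W$ and $W'$, together with the evenness of $k$ on symmetric sets, one of $W\cup W'$, $W\cap W'$ is a $0$-critical set containing $y$; this contradicts the fact (already established, since $x$ lies in a $0$-critical set) that $y$ lies in no $0$-critical set. Relatedly, your statement of the obstruction is off by one: a set containing $x_1,x_1'$ blocks the addition of the two loops precisely when it is at most $1$-critical, not when it is $2$-critical (adding two loops to a $2$-critical set yields a $0$-critical set, which is allowed).

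Two further inaccuracies in your preliminary reduction, though they do not damage the main argument: the claim that $N(v)\cap N(v')=\{x\}$ is impossible ``because it forces a fixed vertex'' is false, since a $C_2$-tight graph may have one fixed vertex (necessarily carrying the two fixed loops); in that situation $\{x\}$ is itself $0$-critical and the main argument simply forces $x_1=y$. You also omit the case $y=x'$, in which $N(v)\cap N(v')=\{x,y\}$; this too is harmless for the union argument but is not covered by your enumeration. The paper avoids all of this by making no case distinction on $N(v)\cap N(v')$ at all.
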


\begin{proof}
    Suppose there exists 0-critical sets $X,Y$, with $x\in X$ and $Y \in Y$.
    Then $X \cup Y +\{v\}$ is not sparse.
    Hence, without loss of generality we may take it that $x$ is not in a 0-critical set.
    Instead now suppose $x,x'$ is in a 1-critical set, say $W$.
    Then $W \cup W$ and $W \cap W$ are both contain $x,x'$ and are $c_2$-symmetric, hence have an even critical value.
    From the equation in Lemma \ref{critcapcup}, 
    $$2= k_W + k_{W'} = k_{W\cup W'} + k_{W\cap W'} + d(W,W').$$
    Thus one of $W \cup W$ and $W \cap W$ is 0-critical, label this 0-critical set $U$.
    If $y$ is in a 0-critical set $Y$, $U\cup Y \cup \{v\}$ breaks sparsity of $G$.
    If instead $y,y'$ is in a 1-critical set, we may do the same steps as above to find a 0-critical set containing both $y,y'$, leading to the same contradiction.
    Hence there is a $x_1 \in \{x,y\}$ which is not in a 0-critical set with $x_1, x_1'$ not in a 1-critical set, completing the proof.
\end{proof}

We put together the combinatorial analysis to this point to prove the following recursive construction. From this we then deduce our characterisation of $C_2$-isostatic graphs. We need one more lemma which we prove for arbitrary cyclic groups as we will use it again later in the article.

Two vertices $u,v \in V$ are connected if there exists a path from $u$ to $v$.
We define two vertices $u,v \in V$ to be \emph{$\gamma$-symmetrically connected} if $u = \gamma v$ or $v = \gamma u$, or if there exists a path from $u$ to $\gamma v$ or $\gamma u$ to $v$.
A \emph{$\Gamma$-symmetrically connected component} is a set of vertices which are pairwise $\gamma$-symmetrically connected for some $\gamma \in \Gamma$, and a graph is \emph{$\Gamma$-symmetrically connected} if it has only one $\Gamma$-symmetrically connected component.

\begin{lem}\label{lem: components}
    A graph $(G,\phi)$ is $C_n$-tight if and only if every $C_n$-symmetrically connected component of $G$ is $C_n$-tight.
\end{lem}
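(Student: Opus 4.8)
The plan is to prove both directions by relating the sparsity counts of $G$ to those of its $C_n$-symmetrically connected components, and then arguing that these components partition the graph in a way compatible with the tightness conditions. First I would observe that the $C_n$-symmetrically connected components of $G$ partition the vertex set $V$: being $\gamma$-symmetrically connected for \emph{some} $\gamma$ is symmetric and reflexive, and transitivity follows because $\gamma$-symmetric connectivity, concatenated with $\gamma'$-symmetric connectivity, yields $(\gamma\gamma')$-symmetric connectivity (a path from $u$ to $\gamma v$ followed by the $\gamma$-image of a path from $v$ to $\gamma' w$ is a path from $u$ to $\gamma\gamma' w$). Crucially, each component $C$ is itself $\Gamma$-invariant: if $v \in C$ then $\gamma v$ is $\gamma$-symmetrically connected to $v$, so $\gamma v \in C$. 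Hence $G$ decomposes as a vertex-disjoint union of the induced subgraphs $G[C_1],\dots,G[C_m]$, and every edge and every loop of $G$ lies in exactly one $G[C_j]$ (an edge $uv$ with $u\in C_i$, $v\in C_j$ forces $u,v$ to be symmetrically connected via the trivial path, so $i=j$). This gives $|V| = \sum_j |C_j|$, $|E| = \sum_j |E(G[C_j])|$, and $|L| = \sum_j |L(G[C_j])|$.

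For the ``if'' direction, suppose every $G[C_j]$ is $C_n$-tight. Then summing $|E(G[C_j])| + |L(G[C_j])| = 2|C_j|$ over $j$ gives $|E| + |L| = 2|V|$, so the count equality holds. For sparsity of an arbitrary subgraph $(V',E')$ with $V' \subseteq V$: writing $V'_j = V' \cap C_j$, every edge and loop induced on $V'$ is induced on some $V'_j$, so $i_{E+L}(V') = \sum_j i_{E+L}(V'_j) \le \sum_j 2|V'_j| = 2|V'|$, using sparsity of each component. The simple-subgraph bound $|E'| \le 2|V'| - 3$ for simple subgraphs with $|E'| > 0$ follows similarly: a connected simple subgraph with at least one edge lies inside a single component (its vertices are all mutually connected, hence in one $C_j$), so it satisfies the bound there; a general simple subgraph with $|E'|>0$ breaks into its connected pieces, at least one of which has an edge and hence loses $3$, while the edgeless pieces contribute at most $2$ per vertex, giving $|E'| \le 2|V'| - 3$ overall. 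Finally, the fixed-count constraints of Table~\ref{table:fixed} are inherited: a vertex, edge, or loop of $G$ is fixed by $\gamma$ iff it is fixed by $\gamma$ within its (invariant) component, so the global fixed counts are the sums of the per-component ones. Thus $G$ is $C_n$-tight.

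For the ``only if'' direction, suppose $G$ is $C_n$-tight; fix a component $C = C_j$ and consider $G[C]$. Sparsity of $G[C]$ and all its subgraphs is immediate since these are subgraphs of $G$, as are the simple-subgraph and fixed-count conditions. The one remaining point is the count equality $i_{E+L}(C) = 2|C|$. From $|E| + |L| = 2|V|$ and the additivity of counts over components, $\sum_j i_{E+L}(C_j) = 2\sum_j |C_j|$; combined with $i_{E+L}(C_j) \le 2|C_j|$ for each $j$ (sparsity applied to $C_j$ as a subgraph of $G$), equality must hold termwise, so $i_{E+L}(C) = 2|C|$ and $G[C]$ is $C_n$-tight.

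The main obstacle I anticipate is not any single hard estimate but getting the bookkeeping exactly right, in two places: (i) verifying that the components are genuinely $\Gamma$-invariant and that consequently edges and loops do not straddle two components, which is what makes all the counts additive; and (ii) handling the $|E'| \le 2|V'|-3$ simple-subgraph bound carefully when $V'$ meets several components, since a single ``$-3$'' must suffice globally even though each edge-containing component contributes its own $-3$ — this is fine because we only need an upper bound, but it is the step most easily mis-stated. The $\gamma$-dependence (different pairs of vertices may be connected via different group elements) is a red herring for the counting argument, since we only ever use that components are invariant subsets; it matters only for the transitivity check, handled above by the $\gamma\gamma'$ composition.
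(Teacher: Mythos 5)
Your proof is correct and takes essentially the same route as the paper's (which is far terser): decompose $G$ into its $C_n$-symmetrically connected components, observe that vertices, edges and loops are partitioned among them so all counts are additive, and combine per-component sparsity with the global equality $|E|+|L|=2|V|$ to force termwise equality. You are in fact more careful than the paper, which does not spell out component invariance, the simple-subgraph bound, or the fixed-element counts; the only residual quibble (shared with, indeed worse in, the paper's version) is that for $n=2$ the sum of per-component fixed-vertex counts could in principle exceed the bound $v_2\le 1$ from Table~\ref{table:fixed}, a corner case both arguments gloss over.
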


\begin{proof}
    Label the $C_n$-symmetrically connected components of $G$ with $H_1, \dots, H_r$, with $H_i = (V_i, E_i, L_i)$.
    By sparsity, we know each of the subgraphs $H_i$ are sparse.
    We have $|E_i|+|L_i| \leq 2|V_i|$ and $|E_i| \leq 2|V_i|-3$, which gives $G$ being tight if and only if equality hold in the first equation for each $i$, which is to say each $C_n$-symmetrically connected component is tight.
\end{proof}

We recall the following $C_2$-tight graphs: $(\mathcal{P}_1,\phi_0)$ with one fixed vertex and two fixed loops; $(\mathcal{P}_1,\phi_0)$ with two vertices and four loops and no vertices or loops fixed.
These graphs are depicted in Figure \ref{fig: c_2 base}.

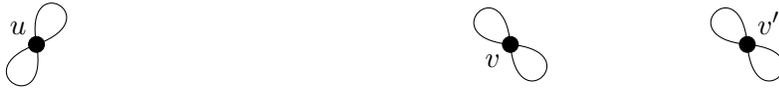
\begin{figure}[ht]
    \centering
    \begin{tikzpicture}
  [scale=0.7,auto=left]

  \coordinate (n0) at (-9,0);
  \coordinate (n1) at (0,0);
  \coordinate (n2) at (4.5,0);  

 \draw[fill=black] (n0) circle (0.15cm)
    node[above left] {$u$};
 \draw[fill=black] (n1) circle (0.15cm)  
    node[below left] {$v$};
 \draw[fill=black] (n2) circle (0.15cm)
    node[above right] {$v'$};

\draw[scale = 3] (-3,0)  to[in=20,out=100,loop] (0,0);
\draw[scale = 3] (-3,0)  to[in=200,out=280,loop] (0,0);
\draw[scale = 3] (0,0)  to[in=95,out=175,loop] (0,0);
\draw[scale = 3] (0,0)  to[in=-5,out=-85,loop] (0,0);
\draw[scale = 3] (1.5,0)  to[in=95,out=175,loop] (0,0);
\draw[scale = 3] (1.5,0)  to[in=-5,out=-85,loop] (0,0);

\end{tikzpicture}
    \caption{The base graphs, left $(\mathcal{P}_1,\phi_0)$ and right $(\mathcal{P}_2,\phi_2)$, for $C_2$-symmetric linearly constrained frameworks.}
    \label{fig: c_2 base}
\end{figure}

\begin{thm}\label{thm:c2recursion}
A graph $(G,\phi)$ is $C_2$-tight if and only if $(G,\phi)$ can be generated from disjoint copies of $(\mathcal{P}_1,\phi_0)$ and $(\mathcal{P}_2,\phi_2)$ by symmetrised 0-extensions and symmetrised 1-extensions.
\end{thm}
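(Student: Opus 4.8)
The plan is to prove the two implications separately: sufficiency is short, while necessity goes by induction on $|V|$.

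For sufficiency, I would argue that the base graphs $(\mathcal{P}_1,\phi_0)$ and $(\mathcal{P}_2,\phi_2)$ are $C_2$-isostatic by Lemma~\ref{lem: base rigid}, and that each symmetrised $0$-extension and $1$-extension preserves $C_2$-isostaticity by Lemmas~\ref{0-ext rigid} and \ref{1-ext rigid}; hence any graph generated from the base graphs by these moves is $C_2$-isostatic, and therefore $C_2$-tight by Theorem~\ref{thm: fixed 2dim lin}. (Equivalently, one can check directly and routinely that each move preserves the count $|E|+|L|=2|V|$, preserves sparsity by the standard Henneberg-type argument of deleting the new vertex orbit and, for the two $1$-extensions, reinstating the removed edge or loop orbit, and leaves the fixed-element counts of Table~\ref{table:fixed} unchanged, since every new orbit of vertices, edges or loops has size $|C_2|=2$ and contains no fixed element.)

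For necessity, by Lemma~\ref{lem: components} it suffices to treat $C_2$-symmetrically connected $G$, applying the inductive hypothesis to each $C_2$-symmetrically connected component otherwise; and for the base of the induction, when $|V|\le 2$ a direct enumeration using $v_2\le 1$ and $e_2=0$ shows the only $C_2$-tight graphs are $(\mathcal{P}_1,\phi_0)$ and $(\mathcal{P}_2,\phi_2)$. For the inductive step with $|V|\ge 3$, I would first show $\delta(G)\le 3$: counting incidences with a loop contributing $1$ to the degree of its vertex gives $\sum_{v}d(v)=2|E|+|L|=|E|+2|V|$, and since a $C_2$-symmetrically connected graph on at least three vertices must contain an edge, $(V,E)$ is a nonempty simple subgraph and $|E|\le 2|V|-3$, so $\sum_v d(v)\le 4|V|-3$ and some vertex $v$ has $d(v)\le 3$. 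A vertex of degree at most $1$ is impossible (deleting its orbit would leave a subgraph violating sparsity), and a degree-$2$ vertex with two loops and no edge would form, with its image, a $C_2$-symmetrically connected component, forcing $G$ to be a base graph; so $v$ is of one of the types (a) two edges, (b) one edge and one loop, (c) three distinct neighbours, (d) two edges and a loop, or (e) one edge and two loops. In cases (a) and (b), Lemma~\ref{lem: c_2 two edges redu} shows $G\setminus\{v,v'\}$ is $C_2$-tight and $G$ is a symmetrised $0$-extension of it. In case (c), $vv'\notin E$ and $|N(v)\cap N(v')|\in\{0,2\}$ (a fixed edge or a single/triple common neighbour would force a fixed vertex), so Lemma~\ref{lem: c_2 three edges redu} (whose content is Lemmas~\ref{lem: c2 0,1,3,4 crit} and \ref{lem: c2 x=z'}) gives a $C_2$-tight $G\setminus\{v,v'\}+\{x_1x_2,x_1'x_2'\}$ of which $G$ is a symmetrised $1$-extension. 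In case (d), Lemma~\ref{lem: c_2 2 edges 1 loop redu} gives a $C_2$-tight $G\setminus\{v,v'\}+\{(x_1,x_1),(x_1',x_1')\}$ of which $G$ is a symmetrised looped $1$-extension. In each case the inductive hypothesis applied to the reduced graph, together with the fact that the reduction is exactly the reverse of the named extension, closes the step.

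The hard part will be case (e): a minimum-degree vertex with one edge and two loops is not the reverse of any available move, so I must show such a vertex never occurs as the \emph{only} reducible vertex. The leverage is that its unique neighbour $u$ satisfies $i_{E+L}(\{v,u\})\le 4$ while two loops already sit at $v$, forcing $u$ to carry at most one loop; hence if $u$ also has degree at most $3$ it falls into one of the cases (a)--(d) and is reducible, and the remaining global possibility (every degree-$\le 3$ vertex of type (e)) should be eliminated by refining the incidence count against the simple-subgraph bound $|E|\le 2|V|-3$. Once case (e) is disposed of, the theorem is essentially the assembly of the already-established reduction Lemmas~\ref{lem: c_2 two edges redu}, \ref{lem: c_2 three edges redu} and \ref{lem: c_2 2 edges 1 loop redu} via this degree count.
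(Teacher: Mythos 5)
Your proposal follows the same route as the paper's proof: reduce to $C_2$-symmetrically connected components via Lemma~\ref{lem: components}, induct on $|V|$, locate a low-degree vertex, and invert one of the extension moves using Lemmas~\ref{lem: c_2 two edges redu}, \ref{lem: c_2 three edges redu} and \ref{lem: c_2 2 edges 1 loop redu}. The one place you diverge is the degree convention, and that is exactly where your argument is left unfinished. The paper counts a loop as contributing two to the degree of its vertex, so $\sum_v d(v)=2(|E|+|L|)=4|V|$ and $2\leq\delta(G)\leq 4$; a vertex with one edge and two loops then has degree five and never arises as a minimum-degree vertex, so your case (e) simply does not occur. (When $\delta(G)=4$ every vertex has degree exactly four, and if no such vertex carried a loop then $2|E|=\sum_v \deg_E(v)=4|V|$ would contradict $|E|\leq 2|V|-3$; the surviving degree-four configurations are two loops, whose orbit is its own component, and two edges plus a loop, handled by Lemma~\ref{lem: c_2 2 edges 1 loop redu}.) If you prefer your convention, the ``refined incidence count'' you gesture at does close case (e), and you should write it out: suppose every vertex of degree at most $3$ has one edge and two loops, and let $a$ be the number of such vertices. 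Then $|E|+2|V|=\sum_v d(v)\geq 3a+4(|V|-a)$ gives $|E|\geq 2|V|-a$, hence $|L|=2|V|-|E|\leq a$; but those $a$ vertices already carry $2a$ loops, forcing $a=0$, and then $\sum_v d(v)\geq 4|V|>4|V|-3\geq |E|+2|V|$ is absurd (using $|E|\leq 2|V|-3$, valid since a $C_2$-symmetrically connected graph on at least three vertices has an edge). The rest of your case analysis, the base case, and the sufficiency direction (whether via the rigidity lemmas or the direct combinatorial check you sketch in parentheses, the latter being what the paper does) are all sound; only case (e) is currently a promissory note rather than a proof.
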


\begin{proof}
It is easy to see that any graph generated from $(\mathcal{P}_1,\phi_0)$ or $(\mathcal{P}_2,\phi_2)$ by symmetrised 0-extensions and 1-extensions is $C_2$-symmetrically connected and $C_2$-tight. Hence $G$ is $C_2$-tight by Lemma \ref{lem: components}.

We show by induction that any $C_2$-tight graph $G$ can be generated from one of $(\mathcal{P}_1,\phi_0),(\mathcal{P}_1,\phi_0)$.
We may assume by Lemma \ref{lem: components} that $G$ is $C_2$-symmetrically connected.
Suppose the induction hypothesis holds for all graphs with $|V| < m$.
Now let $|V|= m$ and suppose $G$ is not isomorphic to $(\mathcal{P}_1,\phi_0)$ or $(\mathcal{P}_2,\phi_2)$.
Since $G$ is $C_2$-tight, the minimum degree is at least 2 and at most 4.

A degree 2 vertex in a tight graph must be adjacent to two vertices and hence is reducible by Lemma \ref{lem: c_2 two edges redu}.
A degree 3 vertex can have a loop and an edge incident to it or be adjacent to three vertices. The former is reducible by Lemma \ref{lem: c_2 two edges redu} and the latter by Lemma \ref{lem: c_2 three edges redu}.

Hence suppose $\delta(G)=4$ and $v$ is a vertex of minimum degree.
We claim there is such a $v$ that is incident to a loop. Suppose not, then every vertex has at least 4 neighbours and so $2|E| \leq \sum_{v \in V}\deg_{E} (v)$ which violates the definition of tight. Since $v$ is incident to a loop and has degree 4 either it has two incident loops or is incident to two edges and a loop. In the former case the orbit of such a vertex would be it's own $C_2$-symmetrically connected component.
In the latter case $w$ is reducible by Lemma \ref{lem: c_2 2 edges 1 loop redu}.
This exhausts the possible cases and completes the proof.
\end{proof}

\begin{thm}\label{thm:mainc2}
A graph $(G,\phi)$ is $C_2$-isostatic if and only if it is $C_2$-tight.
\end{thm}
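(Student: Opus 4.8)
The plan is to combine the recursive construction of Theorem~\ref{thm:c2recursion} with the rigidity-preserving extension lemmas of Section~\ref{sec:ops}, and to check separately that the base graphs are $C_2$-isostatic. The forward direction is immediate: if $(G,\phi)$ is $C_2$-isostatic, then by definition there is a $C_2$-symmetric isostatic framework $(G,p,q)$, and Theorem~\ref{thm: fixed 2dim lin} says $G$ must be $C_2$-tight. So all the content is in the converse.

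For the converse, suppose $(G,\phi)$ is $C_2$-tight. By Theorem~\ref{thm:c2recursion}, $G$ can be built from disjoint copies of $(\mathcal{P}_1,\phi_0)$ and $(\mathcal{P}_2,\phi_2)$ by symmetrised $0$-extensions and symmetrised $1$-extensions. I would argue by induction on the number of construction steps. For the base case, Lemma~\ref{lem: base rigid} shows that $(\mathcal{P}_1,\phi_0)$ is $C_2$-isostatic and $(\mathcal{P}_2,\phi_2) = (\mathcal{P}_2,\phi_2)$ is $C_2$-isostatic (it is an instance of $(\mathcal{P}_n,\phi_n)$ with $n=2$); a disjoint union of $C_2$-isostatic frameworks is $C_2$-isostatic since the rigidity matrix is block-diagonal across components and each block has full rank, so the rank adds up to $2|V|$ while independence is preserved. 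For the inductive step, if $G$ arises from a smaller $C_2$-tight graph $G_0$ by a symmetrised $0$-extension or $1$-extension, then by the induction hypothesis $G_0$ is $C_2$-isostatic, and Lemma~\ref{0-ext rigid} (for $0$-extensions) or Lemma~\ref{1-ext rigid} (for $1$-extensions, covering both the ordinary and the looped variant) produces maps $p^+,q^+$ for which $(G,p^+,q^+)$ is $C_2$-isostatic. Hence $G$ is $C_2$-isostatic, completing the induction.

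One point that needs a remark: the extension lemmas in Section~\ref{sec:ops} are stated for an arbitrary finite group $\Gamma$ acting freely on the orbit of new vertices (an orbit of size $t=|\Gamma|$), whereas the recursive construction for $C_2$ may in principle involve extensions at a fixed vertex. Inspecting the $C_2$-tight constraints in Table~\ref{table:fixed}, the only fixed structure permitted is a single fixed vertex carrying two fixed loops, which is precisely the base graph $(\mathcal{P}_1,\phi_0)$; thus every extension move used in Theorem~\ref{thm:c2recursion} adds a free orbit of two new vertices (two vertices with two edges, or a vertex with an edge and a loop plus its mirror image, or the $1$-extension orbits), and Lemmas~\ref{0-ext rigid} and~\ref{1-ext rigid} apply verbatim with $t=2$. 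I would also note that Theorem~\ref{thm: character count} guarantees the resulting framework, having full rank and satisfying $\chi(P_{E,L})=\chi(\tau\otimes P_V)$, is automatically compatible with the symmetry.

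The main obstacle is not in the rigidity step — the extension lemmas are already proved and do the heavy lifting — but in being certain that Theorem~\ref{thm:c2recursion} genuinely covers every $C_2$-tight graph using only those extension moves whose rigidity counterparts were established; in particular one must confirm that no construction step requires a ``fixed-vertex'' version of a $1$-extension for which no rigidity lemma was given. The observation above, that the only fixed vertex in a $C_2$-tight graph is the isolated pinned vertex $(\mathcal{P}_1,\phi_0)$ which is a base graph rather than the site of an extension, resolves this, so the proof goes through.
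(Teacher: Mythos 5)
Your proposal is correct and follows essentially the same route as the paper: necessity via Theorem~\ref{thm: fixed 2dim lin}, the base graphs handled by Lemma~\ref{lem: base rigid}, and sufficiency by induction along the recursive construction of Theorem~\ref{thm:c2recursion} using the extension Lemmas~\ref{0-ext rigid} and~\ref{1-ext rigid} (your check that every extension adds a free orbit of size $2$, so these lemmas apply with $t=2$, is a worthwhile clarification). The only quibble is your final appeal to Theorem~\ref{thm: character count}, which runs in the wrong direction (isostatic $\Rightarrow$ character equality) and is unnecessary since the extension lemmas already produce $\Gamma$-symmetric placements $p^+,q^+$.
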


\begin{proof}
Necessity was proved in Theorem \ref{thm: fixed 2dim lin}.
Lemma \ref{lem: base rigid} implies the base graphs $(\mathcal{P}_1,\phi_0)$ and $(\mathcal{P}_2,\phi_2)$ are $C_2$-isostatic.
Sufficiency follows from Theorem \ref{thm:c2recursion} and  Lemmas \ref{lem: c_2 two edges redu}, \ref{lem: c_2 three edges redu} and \ref{lem: c_2 2 edges 1 loop redu} by induction on $|V|$.
\end{proof}

\section{$C_n$-symmetric isostatic graphs}
\label{sec:Cn}

In this section we analyse $C_n$ rotational symmetry, for all odd $n \geq 3$. The arguments will build on the proofs in the previous section.

\subsection{Reduction Operations}

\begin{lem}\label{lem: c_n two edges redu}
Let $(G,\phi)$ be a $C_{n}$-tight graph containing a vertex $v$ incident to two edges.
Then $G \setminus \bigcup_{i=0}^n\{\gamma^i v\}$ is $C_{n}$-tight.
\end{lem}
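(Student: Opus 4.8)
The plan is to mirror the short argument behind the $C_2$ case (Lemma~\ref{lem: c_2 two edges redu}): the graph $G' := G\setminus\bigcup_{i}\{\gamma^i v\}$ is a subgraph of $G$, hence automatically sparse, so only the count $|E(G')|+|L(G')|=2|V(G')|$ needs to be checked. Write $O=\{v,\gamma v,\dots,\gamma^{n-1}v\}$ for the orbit of $v$ (the set being deleted). Since $G$ is $C_n$-tight with $n\ge 3$ odd, Corollary~\ref{lem: element count 2-dim} gives $v_n=e_n=l_n=0$, so $|O|=n$, $O$ carries no fixed loops, and $G'$ is again $C_n$-symmetric and trivially inherits $v_n=e_n=l_n=0$. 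As in the statement of Lemma~\ref{lem: c_2 two edges redu}, ``incident to two edges'' is read as: $v$ is incident to exactly two elements of $E\cup L$ (two edges, or one edge and one loop).

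The ingredient absent in the $C_2$ case is the claim that \emph{no edge of $G$ has both endpoints in $O$}, i.e. $i_E(O)=0$. For $C_2$ this is automatic, since the only candidate $vv'$ would be a fixed edge; for $n\ge 3$ it must be argued. Suppose $v$ were adjacent to $\gamma^a v$ with $a\not\equiv 0\pmod n$. Applying the automorphisms $\gamma^{\pm a}$ shows $v$ is also adjacent to $\gamma^{-a}v$, and since $n$ is odd we have $2a\not\equiv 0\pmod n$, so $\gamma^a v\ne\gamma^{-a}v$; thus $v$ already has two distinct incident edges inside $O$. As $v$ has only two incidences in $E\cup L$ altogether, these account for all of them, so $v$ --- and hence, by symmetry, every vertex of $O$ --- has no loop and no edge to $V\setminus O$. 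Then $G[O]$ is $2$-regular with no loops and there are no edges between $O$ and $V\setminus O$, so the $C_n$-symmetrically connected component of $G$ containing $v$ is exactly $(O,E(G[O]),\emptyset)$, a union of cycles on $n$ vertices in total, with $|E|+|L|=n<2n=2|V|$. This component is therefore not $C_n$-tight, contradicting Lemma~\ref{lem: components}. Hence $i_E(O)=0$.

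It then remains to do the bookkeeping. Every incidence at a vertex of $O$ is either a loop on $O$ or an edge joining $O$ to $V\setminus O$, and summing the ``two incidences'' condition over the $n$ vertices of $O$ counts each loop on $O$ once and each such edge once (no edge lies inside $O$), giving $i_L(O)+d(O,V\setminus O)=2n$. Deleting $O$ removes exactly the $i_E(O)+i_L(O)+d(O,V\setminus O)=0+2n=2n$ elements of $E\cup L$ incident to $O$ and exactly $n$ vertices, so $|E(G')|+|L(G')|=(|E(G)|+|L(G)|)-2n=2|V(G)|-2n=2|V(G')|$. Together with sparsity and the inherited conditions $v_n=e_n=l_n=0$, this shows $G'$ is $C_n$-tight.

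I expect the only real obstacle to be the claim $i_E(O)=0$: for rotations of order at least $3$, an edge orbit contained in a single vertex orbit is not ruled out by symmetry alone, so disposing of it under the degree hypothesis requires the component argument above (via Lemma~\ref{lem: components}) rather than the one-line sparsity check that sufficed for $C_2$. Everything else is routine counting identical in spirit to the $C_2$ case.
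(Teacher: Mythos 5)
Your proof is correct, and its overall shape matches the paper's: sparsity of the induced subgraph is automatic, so only the count $|E'|+|L'|=2|V'|$ needs checking, and this reduces to showing that exactly $2n$ elements of $E\cup L$ are deleted, i.e.\ that $i_E(O)=0$ for the orbit $O$ of $v$. The only real divergence is how you establish $i_E(O)=0$. You take a detour: an edge $v\gamma^a v$ forces, via the automorphism $\gamma^{-a}$ and the oddness of $n$, a second distinct edge $v\gamma^{-a}v$, so $O$ becomes a loopless $2$-regular symmetric component with $|E|+|L|=n<2n$, contradicting Lemma~\ref{lem: components}. This is valid, but it is more machinery than needed, and your closing remark --- that the one-line sparsity check which suffices for $C_2$ does not suffice here --- is not right. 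The bookkeeping you already perform at the end kills $i_E(O)>0$ for every $n$: since each orbit vertex carries exactly two incidences, the number of elements of $E\cup L$ removed is $2n-i_E(O)$, hence $|E'|+|L'|=2|V'|+i_E(O)$, and since $G[V\setminus O]$ is a subgraph of the sparse graph $G$ this forces $i_E(O)=0$ outright. That is essentially the paper's (very terse) one-line proof: any failure of the count after the reduction would already be a violation of sparsity inside $G$ itself. Your route buys a concrete picture of the forbidden configuration (a union of cycles forming its own symmetric component), but at the cost of invoking Lemma~\ref{lem: components} and the parity of $n$, neither of which is needed.
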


Note that this lemma includes the cases when $v$ is degree two adjacent to two distinct vertices and when $v$ is degree three with a loop at $v$, and is depicted in Figure \ref{fig:0ext}.

\begin{proof}
For fixed $1 \leq k \leq n$ and any $0 \leq i_1 < \dots < i_k \leq n-1$, if $G \setminus \bigcup_{j=0}^k\{\gamma^{i_j} v\}$ breaks sparsity, $G$ would not be sparse.
\end{proof}

Let us first show the following key technical lemma for $C_3$, and then follow it with the general case.
Both proofs are similar, but technical; the easier case is presented for the reader's convenience.

\begin{lem}
Let $(G,\phi)$ be a $C_{3}$-tight graph containing a $v$ adjacent to two distinct vertices $\{v_1,v_2\}$ and incident to a loop.
Let $\gamma \in C_3 \setminus \{id\}$.
Then there exists some $x \in \{v_1,v_2\}$ such that $G \setminus \bigcup_{i=0}^2\{\gamma^i v\} + \bigcup_{i=0}^2\{\gamma^i x \gamma^i x\}$ is tight.
\end{lem}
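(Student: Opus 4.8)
The plan is to mimic the structure of the proof of Lemma~\ref{lem: c_2 2 edges 1 loop redu}, but now working with a $C_3$ symmetry rather than $C_2$. We are given a node $v$ incident to a loop and to edges $vv_1,vv_2$ with $v_1\neq v_2$, and we must show one of the two symmetrised looped $1$-reductions (adding a loop orbit at the orbit of $v_1$, or at the orbit of $v_2$) preserves $C_3$-tightness. Since $C_3$ has only the identity with fixed edges/loops among elements of order $1$, the only obstruction to sparsity after the reduction is the existence of an overly dense subset; as in the earlier lemmas, the operation fails at $x\in\{v_1,v_2\}$ precisely when there is either a $0$-critical set containing $x$, or a $3$-critical set containing the whole orbit $\{x,\gamma x,\gamma^2 x\}$ (the orbit of $x$ plays the role that the pair $x,x'$ played in the $C_2$ case, since $|C_3|=3$ divides $k_X$ for symmetric $X$). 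So the goal reduces to: it cannot happen that \emph{both} $v_1$ and $v_2$ are ``blocked'' in this sense simultaneously.

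First I would set up the four cases according to whether $v_1$ (resp.\ $v_2$) lies in a $0$-critical set or only in an orbit-wise $3$-critical set. Throughout I will use Lemma~\ref{critcapcup} together with the symmetrisation trick: if $X$ is any critical set, then $X\cup\gamma X\cup\gamma^2 X$ and the various intersections are $C_3$-symmetric, hence have criticality divisible by $3$, and a short union/intersection count (using $d\ge 0$ and submodularity from Lemma~\ref{critcapcup} iterated over the three translates) forces the symmetrised union to remain $0$- or $3$-critical as appropriate. The key structural facts I expect to need are: (a) $v_1$ and $v_2$ cannot both lie in $0$-critical sets, else the union of those two sets together with $v$ violates sparsity (the loop at $v$ means $G[X\cup Y\cup\{v\}]$ would have $i_{E+L}\ge 2|X\cup Y| - 0 + 3 > 2|X\cup Y\cup\{v\}|$ once one accounts for the two edges and the loop at $v$); (b) if $v_1$ lies only in a $3$-critical orbit-set $U_1$ and $v_2$ only in a $3$-critical orbit-set $U_2$, then symmetrising and combining $U_1\cup U_2$ with $v$ (again exploiting the loop) breaks sparsity because $k_{U_1\cup U_2}$ is too small relative to $k_{U_1\cap U_2}$; (c) the mixed case, $v_1$ in a $0$-critical set and $v_2$ in a $3$-critical orbit-set, is handled by the same union-with-$v$ argument.

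The main obstacle, as in the $C_2$ proof, is the bookkeeping in the mixed and $3$-critical cases: one must carefully track how many loops lie on the relevant subsets (since loops distinguish $k$ from $\bar k$), verify that all the submodular inequalities from Lemma~\ref{critcapcup} are tight, and confirm that adjoining $\{v,\gamma v,\gamma^2 v\}$ — together with its loop orbit and its six edges to the orbits of $v_1,v_2$ — genuinely exceeds the sparsity bound $2|V'|$. The cleanest route is to symmetrise \emph{first} (replace each candidate dense set by its $C_3$-closure, which stays critical of the same type by the union count), and then do a single union-with-the-orbit-of-$v$ count. I expect the argument to be slightly shorter than Lemma~\ref{lem: c2 0,1,3,4 crit} because here $v$ has a loop (so we only ever add one orbit of vertices carrying a loop each, contributing $3\cdot 2 + 3 = 9$ to $i_{E+L}$ against $3$ new vertices, i.e.\ exactly the borderline $2\cdot 3 + 3$), which makes the contradictions come out immediately once the critical union is established. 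If instead $v_1=\gamma v_2$ or similar orbit-coincidence occurs, this is impossible since that would force $v$ adjacent to two vertices in the same orbit, which is compatible, but then one uses the orbit-connectivity/Lemma~\ref{lem: components} reduction exactly as in the $C_2$ analogue; I would note this explicitly at the start to dispose of degenerate adjacencies.
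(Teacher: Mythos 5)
Your opening moves match the paper's: at most one of $v_1,v_2$ can lie in a $0$-critical set (else the union of the two sets with $v$, which contributes two edges and a loop against one new vertex, violates sparsity), and the engine of the argument is Lemma~\ref{critcapcup} together with the fact that $3$ divides the criticality of any $C_3$-symmetric set. However, you have mischaracterised what it means for the looped $1$-reduction at $x$ to fail, and the error propagates. Adding a loop at each of $x,\gamma x,\gamma^2 x$ raises $i_{E+L}(X')$ by $j$ for every set $X'$ containing exactly $j$ copies of $x$, so the genuine obstructions are: a $0$-critical set containing one copy, a $1$-critical set containing two copies, and a $2$-critical set containing all three copies. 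The ``$3$-critical set containing the whole orbit'' that you name as the second obstruction is not an obstruction at all --- after gaining three loops it becomes $0$-critical, which is still sparse --- while the two intermediate obstructions (two copies in a $1$-critical set, three copies in a $2$-critical set) never appear in your plan. Ruling those out is precisely the substance of the paper's proof: a two-step induction showing that if no copy of $x$ lies in a $0$-critical set, then no pair of copies lies in a $1$-critical set and no triple in a $2$-critical set, each step applying Lemma~\ref{critcapcup} across the translates $W,\gamma W,\gamma^2 W$ and concluding that $k_{W\cup\gamma W\cup\gamma^2 W}$ would have to equal $1$ (resp.\ $2$), contradicting divisibility by $3$. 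Your ``symmetrise first; the closure stays critical of the same type'' claim is exactly backwards --- the whole point is that the closure \emph{cannot} retain criticality $1$ or $2$, and proving the closure would be forced to do so requires the lower bounds on the intersections supplied by the earlier inductive step.

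A second, consequential problem is the framing as ``$v_1$ and $v_2$ cannot both be blocked'' with a four-way case split. Besides resting on the wrong notion of blocked, at least one of your claimed contradictions fails: in your case (b), if $U_1\ni v_1$ and $U_2\ni v_2$ are disjoint $C_3$-symmetric $3$-critical sets, then $U_1\cup U_2$ is $6$-critical, and adjoining the orbit of $v$ (three vertices, three loops, six edges) lowers the criticality by exactly $3$, leaving a perfectly sparse $3$-critical set --- no violation. The paper needs no such dichotomy: it picks $x$ to be a neighbour avoiding all $0$-critical sets (which exists by the union argument) and then shows that this single hypothesis already precludes every obstruction for $x$ via the divisibility induction. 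You should restructure along those lines and, in particular, supply the missing arguments for the $1$-critical-pair and $2$-critical-triple cases.
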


\begin{proof}
We prove the lemma by first checking there is a neighbour of $v$ not contained in a $0$-critical subset of $V$, and then (for any $k \in \{1,2\}$) that any $k+1$ symmetric copies of that neighbour are not contained in a $k$-critical subset of $V$.

If both of $v_1,v_2$ were in 0-critical sets, say $V_1, V_2$ respectively, then $V_1 \cup V_2 \cup \{v\}$ would break sparsity.
Label the vertex which is not in a 0-critical set $x$.
Note that any symmetric copy of $x$ is also not in a 0-critical set.

Let $W$ be a set which contains $\{\gamma^{i_1}x, \gamma^{i_2}x\}$.
First we note that $W$ and $\gamma^{i_2 - i_1}W$ are not $0$-critical since they each contain a copy of $x$.
Suppose for a contradiction that $W$ and $\gamma^{i_2 - i_1}W$ are $1$-critical.
Then $W \cup \gamma^{i_2 -i_1} W$ contains $\{\gamma^{i_1}x, \gamma^{i_2}x, \gamma^{2i_2 - i_1}x\}$ and is not $0$-critical;
and $W \cap \gamma^{i_2 -i_1} W$ contains $\{\gamma^{i_2}x\}$ and is also not $0$-critical.
By Lemma \ref{critcapcup}, $k_{W \cup \gamma^{i_2 -i_1} W}$ and $k_{W \cap \gamma^{i_2 -i_1} W}$ must both be $1$.
Observe that $\gamma^{2i_2 -2i_1}$ is the third and final element of $C_3$, and $\gamma^{2i_2 -2i_1} W$ is $1$-critical, so again, by Lemma \ref{critcapcup}, $k_{(W \cup \gamma^{i_2 -i_1} W}) \cup \gamma^{2i_2 -2i_1} W$ and $k_{(W \cup \gamma^{i_2 -i_1} W) \cap \gamma^{2i_2 -2i_1} W}$ must both be $1$.
However, since $G$ is $C_3$-symmetric, 3 divides both $|W \cup \gamma W \cup \gamma^2 W|$ and $i_{E+L}(W \cup \gamma W \cup \gamma^2 W)$.
Then since $i_{E+L}(A) = 2|A| - k_A$, $3$ would have to divide the criticality of $W \cup \gamma W \cup \gamma^2 W$, which is a contradiction.
Therefore any $\{\gamma^{i_1}x, \gamma^{i_2}x\}$ with distinct $i_1, i_2 \in \{0,1,2\}$ is not contained in a $1$-critical set.

Let $W$ be a set which contains $\{x, \gamma x, \gamma^{2}x\}$, we know from the above, $W$ is not $0 $ or $1$-critical. Assume for contradiction it is $2$-critical.
Then $W \cup \gamma W$ contains $\{x, \gamma x, \gamma^{2}x\}$ and so is not $1$-critical;
and $W \cap \gamma W$ contains $\{x, \gamma x, \gamma^{2}x\}$ which is also not $1$-critical.
By Lemma \ref{critcapcup}, $k_{W \cup \gamma W}$ and $k_{W \cap \gamma W}$ must both be $2$.
Similarly, $W \cup \gamma W \cup \gamma^2 W$ is $2$-critical.
However, as before $G$ is $C_3$-symmetric, so $3$ would have to divide the criticality of $W \cup \gamma W \cup \gamma^2 W$, which is a contradiction.
This proves any set containing $\{x, \gamma x, \gamma^{2}x\}$ is not a $2$-critical set.
Hence, there are no sets which would break sparsity conditions of our graph class by performing a $1$-reduction at $v$, adding a loop at a neighbour.
\end{proof}

Having established this result for $C_3$-symmetric graphs, we extend this to odd order cyclic symmetric groups.

The two 1-reductions we perform have similarities in their proofs, where we build an inductive argument on the number of symmetric copies of neighbours of $v$.
Since some of these ideas overlap, they will be shared between the two proofs where possible.
The significant difference between the two proofs is that adding an edge means we must check both edge sparsity and sparsity are preserved, whereas adding a loop does not require the edge sparsity condition to be checked.
To begin with we derive a technical lemma which is used in the inductive step of both arguments.

Define $X_j(A) = \big\{\gamma^{i_1}A, \dots, \gamma^{i_j}A : i_1 < \dots < i_j \in \{0, \dots, n-1\}\big\}$, where $A\subset V$.
Write $\mathcal{X}_{j}(A)$ for the set of all such subsets of size $j$ for the given vertex set $A$.
We will write $X_j$ for $X_j(A)$ and $\mathcal{X}_j$ for $\mathcal{X}_j(A)$ where the context is clear.
Let $\phi: \mathcal{X}_{j} \to \Z_{n}$, $\phi(\{\gamma^{i_1}A, \dots, \gamma^{i_j}A\}) = \{i_1, \dots, i_j\}$.

\begin{lem}\label{lem: X_k induction}
Let $(G,\phi)$ be a $C_{n}$-tight graph
and let $X_j$ be defined as above.
If for $A\subset N(v)$, $X_1$ is not in a $0$-critical set for all $X_1 \in \mathcal{X}_{1}$, then $X_j$ is not contained in a $(j-1)$-critical set for all $j\leq n$.
\end{lem}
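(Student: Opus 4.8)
The plan is to prove Lemma~\ref{lem: X_k induction} by induction on $j$, following the template already established in the $C_3$ case. The base case $j=1$ is exactly the hypothesis. For the inductive step, suppose $X_j$ is contained in a $(j-1)$-critical set $W$ for some $X_j = \{\gamma^{i_1}A, \dots, \gamma^{i_j}A\}$ with $2 \leq j \leq n$; we want a contradiction. First observe that $W$ contains a copy $\gamma^{i_1}A$ of $A$, so by the inductive hypothesis (applied to the single copy, i.e.\ $j=1$ after translating by $\gamma^{-i_1}$) $W$ is not $0$-critical, and more generally for each $1 \leq m < j$ a set containing $m$ of the copies $\gamma^{i_1}A, \dots, \gamma^{i_j}A$ cannot be $(m-1)$-critical. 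The idea is to feed $W$ and its translates into Lemma~\ref{critcapcup} repeatedly.

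The key steps: pick a generator $\gamma$ of $C_n$ and consider $W$ together with $\gamma^{d}W$ where $d = i_2 - i_1$ (or more carefully, arrange the translates so that unioning successively picks up new copies of $A$ one at a time). At each stage, $W' \cup \gamma^d W'$ contains one more copy of $A$ than $W'$ did, hence by the inductive hypothesis its criticality is at least as large as needed; meanwhile $W' \cap \gamma^d W'$ still contains at least $j-1$ copies (the shared ones), so its criticality is also bounded below; and $d(W', \gamma^d W') \geq 0$. Since $W', \gamma^d W'$ are both $(j-1)$-critical by assumption (or by what we have shown inductively), Lemma~\ref{critcapcup} forces $k_{W' \cup \gamma^d W'} + k_{W' \cap \gamma^d W'} \leq 2(j-1)$, and combining with the lower bounds from the inductive hypothesis pins both values down to exactly $j-1$. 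Iterating this until we have absorbed all $n$ translates, we conclude that $\bigcup_{i=0}^{n-1}\gamma^i W$ is $(j-1)$-critical. But this set is $C_n$-symmetric (it is a union over a full orbit), and as noted in the paragraph preceding Lemma~\ref{critcapcup} together with the discussion of $C_2$-tight graphs — here extended to $C_n$ — a $C_n$-symmetric vertex set induces a subgraph whose criticality is divisible by $n$ (since no element of $C_n \setminus\{\mathrm{id}\}$ fixes an edge or a loop when $n$ is odd, so $n$ divides both $|U|$ and $i_{E+L}(U)$, hence $k_U = 2|U| - i_{E+L}(U)$ is divisible by $n$). Since $1 \leq j-1 \leq n-1$, this contradicts divisibility by $n$.

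The main obstacle I anticipate is bookkeeping in the iteration: one must be careful that when successively unioning translates $\gamma^{i_m}W$, each new union genuinely contains one additional $\gamma$-copy of $A$ (so the inductive hypothesis gives the right lower bound on criticality of the union) while the intersection retains enough copies of $A$ to apply the hypothesis there as well. The cleanest way is probably to order the exponents and argue that $\big(\bigcup_{m \leq s}\gamma^{i_m}W\big) \cup \gamma^{i_{s+1}}W$ contains $\{\gamma^{i_1}A, \dots, \gamma^{i_{s+1}}A\}$-many copies, hence is not $(j-2)$-or-lower-critical, then run Lemma~\ref{critcapcup} to show it is exactly $(j-1)$-critical; but making sure the intersection term is controlled (it contains the copies common to both sides, which still includes at least the $j-1$ copies $\gamma^{i_2}A, \dots, \gamma^{i_j}A$ or an appropriate subset) requires choosing the enumeration of the orbit sensibly, e.g.\ taking $i_m = (m-1)e$ for a fixed step $e$ so that the translation by a single $\gamma^e$ moves the whole configuration predictably. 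Once the enumeration is fixed, each application of Lemma~\ref{critcapcup} is routine, and the divisibility contradiction closes the argument.
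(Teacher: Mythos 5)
Your overall strategy --- induction on $j$, iterated application of Lemma~\ref{critcapcup} to translates of the hypothetical $(j-1)$-critical set $W$, and a final contradiction with divisibility by $n$ of the criticality of the full $C_n$-symmetric union --- is exactly the strategy of the paper's proof. However, there is a genuine gap in your iteration step. You want each stage to pin both $k_{W'\cup\gamma^d W'}$ and $k_{W'\cap\gamma^d W'}$ to exactly $j-1$, and for that you need the intersection to contain at least $j-1$ of the copies $\gamma^{i_1}A,\dots,\gamma^{i_j}A$, so that the inductive hypothesis forces $k_{W'\cap\gamma^d W'}\geq j-1$. You acknowledge the difficulty and propose to arrange it by choosing the enumeration sensibly, e.g.\ stepping by a fixed $e$. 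But the index set $X=\{i_1,\dots,i_j\}$ is \emph{given} (the lemma quantifies over all $X_j\in\mathcal{X}_j$), not chosen, and for a general $X$ no nonzero translate meets $X$ in $j-1$ elements: for instance $X=\{0,1,3\}\subset\Z_7$ satisfies $|X\cap(X+e)|\leq 1$ for every $e\neq 0$. When the index intersection has only $m<j-1$ elements, your inequality yields only $k_{W'\cup\gamma^d W'}\leq 2(j-1)-m$; the value is not pinned, the bound degrades at each subsequent step, and after absorbing all $n$ translates it can exceed $n$, at which point the divisibility argument gives no contradiction.

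The paper's proof avoids this by never attempting to pin the exact criticality. It uses the inductive hypothesis only to bound the intersection from below by the number of copies of $A$ that it actually contains --- whatever that number is, including zero, in which case the bound degenerates to sparsity $k\geq 0$ --- and then tracks the running upper bound
$k_{W\cup\gamma W\cup\dots\cup\gamma^{t}W}\leq |X\cup\gamma X\cup\dots\cup\gamma^{t}X|-(1+s)$,
where $s$ counts the steps at which the index union strictly grows. Since the index union eventually reaches all of $\Z_n$, this gives $k_{\bigcup_{i}\gamma^{i}W}<n$; and since that union contains a copy of $A$ it is not $0$-critical by the hypothesis of the lemma, so its criticality lies strictly between $0$ and $n$, contradicting divisibility by $n$. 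To repair your write-up, replace the ``pin both to $j-1$'' step with this deficit-tracking bound; the rest of your outline then goes through.
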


\begin{proof}
We proceed by induction on $j$ and begin by noting that the case when $j=1$ is trivial.
Assume no $X_j$ is contained in a $(j-1)$-critical set for all $j\leq k$.
We will show any $X_{k+1}$ is not contained in a $k$-critical set.
Fix $X_{k+1} \in \mathcal{X}_{k+1}$ and write $X = \phi(X_{k+1})$.
For notation, write $\gamma^i X = \phi(\gamma^i X_{k+1})$ and $\gamma^i X \cup \gamma^j X = \phi(\gamma^i X_{k+1} \cup \gamma^j X_{k+1})$ and $\gamma^i X \cap \gamma^j X$ similarly.
By the induction hypothesis, any set containing $X_{k+1}$ also contains $k$ or fewer copies of $x$, so is not $j$-critical for any $j<k$.
Suppose for a contradiction that $X_{k+1}$ is contained in a $k$-critical set, say $W$.
We will show that $\bigcup_{i=0}^{n-1}\gamma^i W$ has $2|\bigcup_{i=0}^{n-1}\gamma^i W|-a$ edges for some $a<n$. 
Since $\bigcup_{i=0}^{n-1}\gamma^i W$ is symmetric, $n$ divides $|\bigcup_{i=0}^{n-1}\gamma^i W|$. As our graph class has no fixed edges or loops, this gives a contradiction.

Observe that if 
$\gamma^t X \subseteq X \cup \dots \cup \gamma^{t-1} X$ and
 $X \cup \dots \cup \gamma^{t-1} X = X \cup \dots \cup \gamma^t X$ then 
 $$k_{W \cup \dots \cup \gamma^{t-1} W} = k_{W \cup \dots \cup \gamma^t W}.$$
Therefore, to bound $k_{W \cup \dots \cup \gamma^{n-1} W}$ our main focus is the case when $\gamma^t X \not\subseteq X \cup \dots \cup \gamma^{t-1} X$.
First, when $\gamma X \not\subseteq X$ we have $|X \cap \gamma X| < |X| < |X \cup \gamma X|$.
By the induction hypothesis, any set containing $\phi^{-1}(X \cap \gamma X)$, such as $W\cap \gamma W$, is not contained in a $(|X\cap \gamma X|-1)$-critical set.
This implies $k_{W\cap \gamma W} \geq |X \cap \gamma X|$.
Lemma \ref{critcapcup} implies that
\begin{equation}\label{eq: W and X}
    \begin{split}
        k_{W \cup \gamma W} & = k_W + k_{\gamma W} - k_{W \cap \gamma W} - d(W, \gamma W) \\
        & \leq |X| -1 + |\gamma X| -1 - |X \cap \gamma X| - d(W, \gamma W) \\
        & \leq |X \cup \gamma X| -2
    \end{split}
\end{equation}
and hence the critical value for $W \cup \gamma W$ is at most $(|X \cup \gamma X| -2)$.\\
We repeat this process with $\gamma^2 X \not\subseteq X \cup \gamma X$, and hence investigate $(X \cup \gamma X) \cup \gamma^2 X$ and $(X \cup \gamma X) \cap \gamma^2 X$.
We know that $k_{W \cup \gamma W} = k$ or $k_{W \cup \gamma W} \leq |X \cup \gamma X| -2$ and $k_{\gamma^2 W}=k$.
Since $\gamma^2 X \not\subseteq X \cup \gamma X$ we have 
$$|(X \cup \gamma X) \cap \gamma^{2} X| < |X \cup \gamma X| < |(X \cup \gamma X) \cup \gamma^{2} X|.$$
Noting that $\phi^{-1}((X \cup \gamma X) \cap \gamma^{2} X)$ is not contained in any $(|(X \cup \gamma X) \cap \gamma^{2} X| -1)$-critical set, (as $(X \cup \gamma X) \cap \gamma^{2} X \subset \gamma^{2}X$ we can apply the induction hypothesis), so 
$$k_{(W \cup \gamma W) \cap \gamma^2 W} \geq |(X \cup \gamma X) \cap \gamma^{2} X|$$
(as $\phi^{-1}((X \cup \gamma X) \cap \gamma^{2} X) \subseteq (W \cup \gamma W) \cap \gamma^2 W$), therefore
\begin{equation}\label{eq: 2nd W and X}
    \begin{split}
        k_{W \cup \gamma W\cup \gamma^{2}W} & = k_{W\cup \gamma W} + k_{\gamma^{2} W} - k_{(W \cup \gamma W) \cap \gamma^2 W} - d(W \cup \gamma W, \gamma^2 W) \\
        & \leq |X \cup \gamma X| -2 + |\gamma^2 X| -1 - |(X \cup \gamma X) \cap \gamma^2 X| - d(W \cup \gamma W, \gamma^2 W) \\
        & \leq |X \cup \gamma X \cup \gamma^2 X| -3.
    \end{split}
\end{equation}
Thus the critical value for $W \cup \gamma W \cup \gamma^2 W$ is at most $(|X \cup \gamma X \cup \gamma^2 X| -3)$.
Recalling that $\gamma^t X \subseteq X \cup \dots \cup \gamma^{t-1} X$ implies that 
$$k_{W \cup \dots \cup \gamma^{t-1} W} = k_{W \cup \dots \cup \gamma^t W},$$ 
and noting that $|(X \cup \dots \cup \gamma^{n-1}X)\setminus X| = n-k-1$, the case when $\gamma^t X \not\subseteq X \cup \dots \cup \gamma^{t-1} X$ can happen at most $n-k-1$ times.
Therefore, we obtain that 
$$k_{W \cup \dots \cup \gamma^{n-1} W} \leq |X \cup \dots \cup \gamma^{n-1} X| - n+k.$$
Finally since $|X \cup \dots \cup \gamma^{n-1} X|=n$, $W \cup \dots \cup \gamma^{n-1} W$ cannot be $n$-critical.
However, since $G$ is $C_n$-symmetric, $n$ divides $|W \cup \dots \cup \gamma^{n-1} W|$ and $n$ divides $i_{E+L}(W \cup \dots \cup \gamma^{n-1} W)$. However then, since $i_{E+L}(A) = 2|A| - k_A$, $n$ divides the criticality of $W \cup \dots \cup \gamma^{n-1} W$, a contradiction.
\end{proof}

\begin{lem}\label{lem: c_n 2 edges 1 loop redu}
Let $(G,\phi)$ be a $C_{n}$-tight graph.
Suppose $v$ is a node adjacent to distinct vertices $\{v_1,v_2\}$, and $v$ has a loop. Let $\gamma \in C_n$ be a generator of $C_n$.
There exists some $x \in \{v_1,v_2\}$ such that $G \setminus \bigcup_{i=0}^{n-1}\{\gamma^i v\} + \bigcup_{i=0}^{n-1}\{\gamma^i x \gamma^i x\}$ is $C_n$-tight.
\end{lem}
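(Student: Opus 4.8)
The plan is to treat the operation as a symmetrised looped $1$-reduction and to check that the resulting graph $G' := \bigl(G \setminus \bigcup_{i=0}^{n-1}\{\gamma^i v\}\bigr) + \bigcup_{i=0}^{n-1}\{\gamma^i x\,\gamma^i x\}$ is $C_n$-tight for a suitable choice of $x \in \{v_1, v_2\}$. Since $\gamma$ generates $C_n$ and we delete one full vertex orbit and add one full loop orbit, $G'$ carries an induced $C_n$-action and so is $\Gamma$-symmetric; what remains is sparsity and the equality $|E(G')| + |L(G')| = 2|V(G')|$. The counting equality is essentially automatic: deleting the orbit of $v$ removes the $n$ orbit vertices, the $2n$ edges $\gamma^i v v_1, \gamma^i v v_2$ (distinct since $v_1 \ne v_2$ and $v_1, v_2$ lie outside the orbit), and the $n$ loops at the $\gamma^i v$, while the reduction adds back $n$ loops; hence $|V|$ decreases by $n$ and $|E|+|L|$ by $2n$, so tightness of $G$ yields $|E(G')| + |L(G')| = 2|V(G')|$.

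For edge-sparsity, observe that the reduction only deletes vertices and edges and introduces loops, so every simple (loop-free) subgraph of $G'$ is a simple subgraph of $G$; the bound $|E'| \le 2|V'| - 3$ therefore passes down from $G$. The substantive condition is the full sparsity bound $|E'| + |L'| \le 2|V'|$. For a vertex set $Y$ of $G'$, the value $i_{E+L}(Y)$ computed in $G'$ equals $i_{E+L}(Y)$ computed in $G$ plus the number $m_Y$ of indices $i$ with $\gamma^i x \in Y$; thus the bound for $G'$ is equivalent to $k_Y \ge m_Y$ for every $Y$, i.e. to the statement that no vertex set of $G$ containing $m$ distinct copies of $x$ is $(m-1)$-critical.

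I would obtain this from Lemma \ref{lem: X_k induction} with $A = \{x\}$, whose hypothesis is that no copy $\gamma^i x$ is contained in a $0$-critical set. To arrange this, suppose for contradiction that $v_1$ lies in a $0$-critical set $V_1$ and $v_2$ in a $0$-critical set $V_2$. Since $v$'s only incidences are its loop and the edges $vv_1, vv_2$, if $v$ lay in such a set then deleting $v$ either leaves a $0$-critical set avoiding $v$ or already contradicts sparsity of $G$; so we may assume $v \notin V_1 \cup V_2$. If $V_1 \cap V_2 \ne \emptyset$, Lemma \ref{critcapcup} forces $V_1 \cup V_2$ to be $0$-critical, and whether or not $V_1, V_2$ meet the set $V_1 \cup V_2 \cup \{v\}$ then carries at least $2|V_1 \cup V_2| + 3 > 2|V_1 \cup V_2 \cup \{v\}|$ incident edges and loops — using the loop at $v$ and the two edges $vv_1, vv_2$ — contradicting sparsity of $G$. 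Hence at least one of $v_1, v_2$, call it $x$, lies in no $0$-critical set, and since a $C_n$-image of a $0$-critical set is again $0$-critical, neither does any $\gamma^i x$. Lemma \ref{lem: X_k induction} now yields that for every $j \le n$ no set containing $j$ distinct copies of $x$ is $(j-1)$-critical, which is precisely $k_Y \ge m_Y$ for all $Y$, completing the verification that $G'$ is $C_n$-tight.

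The only genuinely delicate point — controlling the criticality of $\bigcup_i \gamma^i W$ via the divisibility coming from $C_n$-symmetry and the absence of fixed edges and loops — has already been isolated and proved in Lemma \ref{lem: X_k induction}; the present lemma is then a short assembly of that result with the selection of the neighbour $x$.
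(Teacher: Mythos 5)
Your proposal takes essentially the same route as the paper's proof: you select $x \in \{v_1,v_2\}$ lying in no $0$-critical set by observing that otherwise $V_1 \cup V_2 \cup \{v\}$ would violate sparsity, and then invoke Lemma \ref{lem: X_k induction} with $A=\{x\}$ to exclude $(j-1)$-critical sets on $j$ symmetric copies of $x$; your extra verification of the global count and of the simple-subgraph condition is correct but routine. The only blemish is the aside asserting that if $v$ lay in a $0$-critical set then deleting $v$ leaves a $0$-critical set or violates sparsity --- when $v$, $v_1$ and $v_2$ all lie in the set one only obtains a $1$-critical set --- but this is harmless, since the critical sets relevant to the reduction are subsets of $V\setminus\bigcup_i\{\gamma^i v\}$ in any case.
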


\begin{proof} Since $\gamma \in C_n$ is a generator of $C_n$, for any $u\in V$, $u, \gamma u, \ldots, \gamma^{n-1} u$ are all distinct.
We will show that there is a neighbour of $v$ not contained in a $0$-critical subset of $V$, and that (for any $k \in \{1,\dots, n-1\}$) any $k+1$ symmetric copies of that neighbour are not contained in a $k$-critical subset of $V$.
We prove this by induction on $k$.

To see there exists an $x \in \{v_1,v_2\}$ which is not in a 0-critical set, suppose both $v_1$ and $v_2$ are in 0-critical sets. Let $V_1, V_2$ denote the 0-critical sets containing $v_1,v_2$ respectively. Then $V_1 \cup V_2 \cup \{v\}$ would break sparsity.
The base case of induction is now complete since, by extension, $\gamma^i x$ is not in a $0$-critical set for any $i \in [n-1]$.
Indeed, we now have a set $\{x, \gamma x, \dots, \gamma^{n-1} x\}$ where no one element is contained in a 0-critical set.
Hence, we have, for $\{x\}\subset N(v)$, shown that any $X_1(\{x\}) \in \mathcal{X}_1(\{x\})$ is not contained in a 0-critical set, so Lemma \ref{lem: X_k induction} implies that no $X_j$ is contained in a $(j-1)$-critical set for all $j\leq n$.
Hence, the 1-reduction will not break sparsity of $G$, completing the proof.
\end{proof}

\begin{lem}\label{lem: c_n 3edges redu}
For a positive odd integer $n$, let $(G,\phi)$ be a $C_{n}$-tight graph 
and let $v\in V$ be a node adjacent to distinct vertices $\{v_1,v_2,v_3\}$.
Let $\gamma \in C_n$ be a generator of $C_n$.
There exists some $x,y \in \{v_1,v_2,v_3\}$ such that $G \setminus \bigcup_{i=0}^{n-1}\{\gamma^i v\} + \bigcup_{i=0}^{n-1}\{\gamma^i x \gamma^i y\}$ is $C_n$-tight.
\end{lem}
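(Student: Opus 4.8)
The plan is to mirror the strategy of the $C_2$ case (Lemmas~\ref{lem: c2 0,1,3,4 crit} and~\ref{lem: c2 x=z'}), but to exploit the divisibility-by-$n$ argument already packaged in Lemma~\ref{lem: X_k induction}. The reduction removes the orbit $\bigcup_{i=0}^{n-1}\{\gamma^i v\}$ and adds the edge orbit $\bigcup_{i=0}^{n-1}\{\gamma^i x\,\gamma^i y\}$ for a suitable pair $x,y\in N(v)$ with $xy\notin E$. (If $G[N(v)]$ had three edges we would get $K_4$, contradicting sparsity, so at least one of the three candidate pairs is a non-edge; in fact we will need a non-edge pair that also satisfies the critical-set conditions below.) The reduction preserves $\Gamma$-symmetry by construction, and it preserves the count $|E|+|L|=2|V|$ since we delete $n$ vertices (dropping $2n$ from $2|V|$), delete $n$ edges (the spokes $\gamma^i v\gamma^i v_k$ for $k\in\{1,2,3\}$ contribute $3n$, but one of these is folded into the loop if $v$ had a loop — here $v$ has three distinct neighbours, so $3n$ edges vanish), delete $n$ loops (the loop orbit at $v$), and add $n$ edges; net change $3n+n-n=3n$, matching. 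So the only thing to check is that the resulting graph is \emph{sparse}, i.e. that adding the edge orbit $\bigcup_i\{\gamma^i x\gamma^i y\}$ to $G\setminus\bigcup_i\{\gamma^i v\}$ does not create a subgraph violating either $|E'|+|L'|\le 2|V'|$ or $|E'|\le 2|V'|-3$.

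First I would reduce sparsity-violation to a statement about critical sets, exactly as in the $C_2$ proofs: adding the orbit of the edge $xy$ breaks the looped sparsity count iff there is a symmetric vertex set $W$ containing enough symmetric copies $\gamma^{i_1}x\gamma^{i_1}y,\dots$ to be "over-full", which (after closing $W$ under $\Gamma$, using Lemma~\ref{critcapcup} to control unions/intersections) forces $W$ to contain, for each $i$, the pair $\{\gamma^i x,\gamma^i y\}$ inside a set of criticality too small. Concretely, the bad configuration is: some pair $x_1,x_2\in N(v)$ lies in a $0$-critical set, or (for edge-sparsity) some pair lies in a $3$-edge-critical set, or some orbit-union of pairs lies in a $1$-critical / $4$-edge-critical set; and these persist even after contracting along $v$'s orbit. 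The key point is that for $n$ odd there are \emph{no fixed edges or loops}, so any symmetric vertex set has criticality divisible by $n$ — this is the leverage that replaces the parity ($2\mid k_X$) arguments of Section~\ref{sec:C2}. I would therefore show: (a) at most one of the three pairs $\{v_1,v_2\},\{v_1,v_3\},\{v_2,v_3\}$ can lie in a $0$-critical set (else union those two $0$-critical sets with $\{v\}$ and violate sparsity); (b) among the remaining pairs, one must also avoid $3$-edge-critical sets, using Lemma~\ref{critcapcup} in the edge-critical form together with the edge(s) induced on $N(v)$ to force $d(\cdot,\cdot)\ge 1$ (as in Case~2/Case~3 of Lemma~\ref{lem: c2 0,1,3,4 crit}); and (c) once a pair $x,y$ avoids $0$-critical and $3$-edge-critical sets, Lemma~\ref{lem: X_k induction} (applied with $A=\{x\}$ or $A=\{x,y\}$ as appropriate, after noting neither $x$ nor $y$ nor the pair sits in a too-small critical set) upgrades this to: no orbit-union $X_j$ of the pair lies in a $(j-1)$-critical set, which is precisely what rules out the symmetric obstruction $W$ above.

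The bookkeeping splits along the number of edges induced by $N(v)$: zero, one, or two (three is impossible). With $0$ induced edges the pairs are "independent" and (a) already leaves us a free pair; with $1$ or $2$ induced edges we get extra edges $d(U_1,U_2)\ge 1$ or $d(X,U)\ge 1$ feeding into the Lemma~\ref{critcapcup} inequalities, which is what kills the remaining candidate configurations — this is a direct adaptation of Cases~1--3 in the proof of Lemma~\ref{lem: c2 0,1,3,4 crit}, and I would also invoke the $C_n$-analogue of Remark~\ref{rem: no 0c and 4e} (impossibility of a pair in a $0$-critical set while another pair's orbit lies in a $4$-edge-critical set). One subtlety absent in the $C_2$ case: since $\gamma$ is a generator of $C_n$ and $n$ is odd, the orbit $v,\gamma v,\dots,\gamma^{n-1}v$ has $n$ distinct vertices and $N(v)\cap N(\gamma^i v)$ cannot force a fixed vertex (no fixed vertices exist), so we do not need a separate "$x=\gamma^i z$" lemma as in Lemma~\ref{lem: c2 x=z'} — but I would still have to check the cases where $v_1,v_2,v_3$ are not all in distinct $\Gamma$-orbits, i.e. where some $v_k=\gamma^i v_\ell$; this is where the argument is most delicate, because the added edge orbit may then not consist of $n$ distinct edges, and I would handle it by a careful orbit-size count analogous to the treatment in Lemma~\ref{lem: c2 x=z'}.

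\medskip

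\noindent\textbf{Main obstacle.} The hard part is case~(b)–(c) when $N(v)$ induces one or two edges \emph{and} two of the three neighbours lie in the same $\gamma$-orbit: then the "extra edge" that one needs to force $d\ge 1$ in the Lemma~\ref{critcapcup} estimates may itself be an image under $\Gamma$ of a spoke at $v$, so the relevant critical sets interact with $v$'s orbit in a way that is not captured by a single application of the submodular inequality. I expect to need to iterate Lemma~\ref{critcapcup} around the full $\Gamma$-orbit (as in the proof of Lemma~\ref{lem: X_k induction}) while simultaneously tracking which spoke edges have been absorbed, and to finish by the divisibility-by-$n$ contradiction on $i_{E+L}(\bigcup_i\gamma^i W)$. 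Getting the interplay of the "$N(v)$ induces an edge" bonus with the orbit-closure argument correct — in particular ensuring the added edge orbit really contributes $n$ independent new rows and not fewer — is the crux.
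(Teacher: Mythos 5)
Your overall architecture matches the paper's: find a pair of neighbours avoiding $0$-critical and $3$-edge-critical sets (by the arguments of Lemma~\ref{lem: c2 0,1,3,4 crit}), then close up under the group action and use Lemma~\ref{lem: X_k induction} together with divisibility by $n$. However, there is a genuine gap in how you handle the edge-sparsity condition $|E'|\le 2|V'|-3$, and it is precisely the part of the argument where the hypothesis that $n$ is odd must do real work. Your proposal asserts that the divisibility-by-$n$ contradiction on $i_{E+L}(\bigcup_i\gamma^i W)$ ``rules out the symmetric obstruction,'' but that contradiction only works for the loop-inclusive criticality $k_X$, where the orbit-closure estimate gives $k_{\bigcup_i \gamma^i W} < n$ while symmetry forces $n \mid k_{\bigcup_i \gamma^i W}$ and $k\ge 0$. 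For \emph{edge}-criticality the analogous estimate only gives $\Bk_{\bigcup_i \gamma^i W} < 2n$, so divisibility leaves open the case $\Bk_{\bigcup_i \gamma^i W} = n$, which is perfectly consistent with sparsity. The paper must then carry out a substantial further analysis: setting $U=\bigcup_i\gamma^i W$ (an $n$-edge-critical, loopless, symmetric set), it examines the interaction of $U$ with $0$-critical and $3$-edge-critical sets containing the \emph{other} two pairs $\{x_0,z_0\}$, $\{y_0,z_0\}$ (the sets $S_0$, $T_0$ and the $4$-edge-critical set $S_0\cup T_0$), runs a second induction for whichever of those pairs survives, and finally disposes of the case where the index set $X$ generates a proper subgroup $\langle X\rangle\le\mathbb{Z}_n$ of index $m$ via the inequality $m(k+3)\le n+2m<2n$ --- which is exactly where oddness of $n$ enters. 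Your outline never uses oddness in a load-bearing way, yet the paper explicitly notes after the proof that the statement's argument fails for $C_{2m}$ (a $4$-edge-critical $W$ containing $\{x_0,y_0\}$ and $\{x_m,y_m\}$ can have symmetric closure that is $2n$-edge-critical); any correct proof must therefore invoke oddness essentially, and yours does not.

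A secondary, more minor issue: your edge/loop count for the move is garbled ($v$ is adjacent to three distinct vertices here, so it carries no loop; the balance is simply $-3n+n=-2n$ in $|E|+|L|$ against $-2n$ in $2|V|$), and the ``same-orbit neighbours'' difficulty you flag as the crux is not where the real difficulty lies --- for odd $n$ the added edge orbit automatically has $n$ distinct members since $2j\equiv 0 \pmod n$ forces $j=0$. The crux is the residual case $\Bk_U=n$ described above.
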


\begin{proof}
With the same arguments used in Lemma \ref{lem: c2 0,1,3,4 crit}, we may show that there exists a pair $\{x_0, y_0\} \in \{\{v_1, v_2\}, \{v_1, v_3\}, \{v_2, v_3\}\}$ which is not contained in a 0-critical set or a 3-edge-critical set. 
We claim that for some $i,j \in \{1,2,3\}$, and any choice of $k+1$ elements from $\{\{v_i,v_j\}, \{\gamma v_i, \gamma v_j\}, \dots, \{\gamma^{n-1}v_i, \gamma^{n-1} v_j\}\}$, there is no $(k-1)$-critical set containing those $k$ elements.
We prove this by induction on $k$.

We have that $\{x_0,y_0\}$ is not contained in a 0-critical set.
We write $x_i = \gamma^i x_0$, and similarly $y_i$.
By the symmetry of $G$, $\{x_i, y_i\}$ is not in a 0-critical set for any $i \in \{0, \dots, n-1\}$. Hence the basis of induction is complete.

Write $X_k(\{x_0, y_0\}) = \big\{\{x_{i_1}, y_{i_1}\}, \dots, \{x_{i_k},y_{i_k}\} : i_1 < \dots < i_k \in \{0, \dots, n-1\}\big\}$.
Assume no $X_j$ is contained in a $(j-1)$-critical set for all $j\leq k$.
Then, Lemma \ref{lem: X_k induction} completes the induction.
Hence, we have shown that there is a pair $\{x_0, y_0\}$ not in a 0-critical set, and for any such pair, the union of any $k$ symmetric copies of that pair is not contained in a $(k-1)$-critical set.
It remains to show is that the 1-reduction does not violate the inequality $|E'| \leq 2|V'|-3$.

We require an analogous reduction to Lemma \ref{lem: X_k induction} with edge-criticality. To this end,  for a given $k$-edge-critical set $W$, we consider the union $\bigcup_{i=0}^{n-1}\gamma^i W$, which is a $C_n$-symmetric set.
In this case it is possible for the order of the group to divide the number of edges in the set, so instead we will show that $\bigcup_{i=0}^{n-1}\gamma^i W$ has $2|\bigcup_{i=0}^{n-1}\gamma^i W|-a$ edges for some $a<2n$. 
Since $\bigcup_{i=0}^{n-1}\gamma^i W$ is symmetric we know $n$ divides $|\bigcup_{i=0}^{n-1}\gamma^i W|$. Hence it must be that $a=n$.

Assume no $X_j$ is contained in a $(j+2)$-edge-critical set for all $j\leq k$.
Then, for a contradiction, suppose $X_{k+1}$ is contained in a $(k+3)$-edge-critical set.
That is, there exists a $W$ containing $k+1$ copies of $\{x_0, y_0\}$ such that $\Bk_W = k+3 = |X| + 3 -1$.
We follow a similar approach to Lemma \ref{lem: X_k induction}, however it is now vital that the appropriate intersections be non-empty.

First take $|X \cap \gamma^{i_2 -i_1} X| < |X| < |X \cup \gamma^{i_2 -i_1} X|$.
By the induction hypothesis, any set containing $\phi^{-1}(X \cap \gamma^{i_2 -i_1} X)$, such as $W\cap \gamma^{i_2 -i_1} W$, is not contained in a $(|X\cap \gamma^{i_2 -i_1} X|+2)$-critical set.
This implies $k_{W\cap \gamma^{i_2 -i_1} W} \geq |X \cap \gamma^{i_2 -i_1} X|+3$.
Lemma \ref{critcapcup} implies that
\begin{equation}\label{eq: W and X}
    \begin{split}
        \Bk_{W \cup \gamma^{i_2 -i_1} W} & = \Bk_W + \Bk_{\gamma^{i_2 -i_1} W} - \Bk_{W \cap \gamma^{i_2 -i_1} W} - d(W, \gamma^{i_2 -i_1} W) \\
        & \leq |X|+3 -1 + |\gamma^{i_2 -i_1} X|+3 -1 - |X \cap \gamma^{i_2 -i_1} X|-3 \\
        & \leq |X \cup \gamma^{i_2 -i_1} X| +3 -2.
    \end{split}
\end{equation}
Hence the critical value for $W \cup \gamma^{i_2 -i_1} W$ is at most $|X \cup \gamma^{i_2 -i_1} X| +3 -2$.
We repeat this process with $\gamma^{i_3 -i_1} X \not\subseteq X \cup \gamma^{i_2 -i_1} X$, and hence take $(X \cup \gamma^{i_2 -i_1} X) \cup \gamma^{i_3 -i_1} X$ and $(X \cup \gamma^{i_2 -i_1} X) \cap \gamma^{i_3 -i_1} X$.
Continuing, we obtain
$$\Bk_{W\cup \gamma^{i_2 -i_1} W \cup \dots \cup \gamma^{i_k -i_1} W} \leq |X\cup \dots \cup \gamma^{i_k -i_1} X| + 3 - k.$$
If $X$ generates $\Z_n$, with the same reasoning as in Lemma \ref{lem: X_k induction}, we can choose an ordering $j_1, \dots, j_n$ of $\Z_n$ such that $$\Bk_{\gamma^{j_1}W\cup \dots \cup \gamma^{j_n} W} \leq |\gamma^{j_1}X\cup \dots \cup \gamma^{j_n} X| + 3 - n+k=k+3.$$
Since $\gamma^{j_1}W\cup \dots \cup \gamma^{j_n} W$ is $c_n$-symmetric, we have a contradiction unless $\Bk_{\gamma^{j_1}W\cup \dots \cup \gamma^{j_n} W} = n$.
Suppose this equality holds, and write $U = \bigcup_{i=0}^{n-1} \gamma^i W$.

\begin{figure}
    \begin{center}
    \begin{tikzpicture}
  [scale=.5]
 
  \coordinate (n101) at (2.5,0);
  \coordinate (n102) at (2.165,1.25);
  \coordinate (n103) at (1.25,2.165);
  \coordinate (n104) at (-1.25,2.165);
  \coordinate (n105) at (-2.165,1.25);
  \coordinate (n106) at (-2.5,0);
  \coordinate (n107) at (-1.25,-2.165);
  \coordinate (n108) at (0,-2.5);
  \coordinate (n109) at (1.25,-2.165);
  \coordinate (n110) at (0.8*4.899,0.8*2.828);
  \coordinate (n111) at (0.8*-4.899,0.8*2.828);
  \coordinate (n112) at (0,-0.8*5.657);

  \coordinate (n201) at (12.5,0);
  \coordinate (n202) at (12.165,1.25);
  \coordinate (n203) at (11.25,2.165);
  \coordinate (n204) at (8.75,2.165);
  \coordinate (n205) at (10-2.165,1.25);
  \coordinate (n206) at (7.5,0);
  \coordinate (n207) at (8.75,-2.165);
  \coordinate (n208) at (10,-2.5);
  \coordinate (n209) at (11.25,-2.165);
  \coordinate (n210) at (0.8*4.899+10,0.8*2.828);
  \coordinate (n211) at (0.8*-4.899+10,0.8*2.828);
  \coordinate (n212) at (10,-0.8*5.657);

  \coordinate (n1) at (1.1*2.5,0);
  \coordinate (n2) at (1.1*2.165,1.1*1.25);
  \coordinate (n4) at (-1.1*1.25,1.1*2.165);
  \coordinate (n5) at (-1.1*2.165,1.1*1.25);
  \coordinate (n7) at (-1.1*1.25,-1.1*2.165);
  \coordinate (n8) at (0,-1.1*2.5);
  \coordinate (n12) at (1.1*2.165+10,1.1*1.25);
  \coordinate (n13) at (1.1*1.25+10,1.1*2.165);
  \coordinate (n15) at (-1.1*2.165+10,1.1*1.25);
  \coordinate (n16) at (-1.1*2.5+10,0);
  \coordinate (n18) at (10,-1.1*2.5);
  \coordinate (n19) at (1.1*1.25+10,-1.1*2.165);

 \draw(0,0) node{$U$}; 
 \draw(10,0) node{$T$};  

 \draw[fill=black] (n101) circle (0.15cm)
        node[right] {$x_0$};
 \draw[fill=black] (n102) circle (0.15cm)
        node[left] {$y_0$};
 \draw[fill=black] (n103) circle (0.15cm)
        node[ above left] {$z_0$};
 \draw[fill=black] (n104) circle (0.15cm);
 \draw[fill=black] (n105) circle (0.15cm);
 \draw[fill=black] (n106) circle (0.15cm);
 \draw[fill=black] (n107) circle (0.15cm);
 \draw[fill=black] (n108) circle (0.15cm);
 \draw[fill=black] (n109) circle (0.15cm);
 \draw[fill=black] (n110) circle (0.15cm)
	    node[right] {$v$};
 \draw[fill=black] (n111) circle (0.15cm)
	    node[left] {$\gamma v$};
 \draw[fill=black] (n112) circle (0.15cm)
	    node[left] {$\gamma^{2} v$};

   \foreach \from/\to in {n101/n110,n102/n110,n103/n110,n104/n111,n105/n111,n106/n111,n107/n112,n108/n112,n109/n112} 
    \draw (\from) -- (\to);

 \draw[fill=black] (n201) circle (0.15cm);
 \draw[fill=black] (n202) circle (0.15cm);
 \draw[fill=black] (n203) circle (0.15cm);
 \draw[fill=black] (n204) circle (0.15cm);
 \draw[fill=black] (n205) circle (0.15cm);
 \draw[fill=black] (n206) circle (0.15cm);
 \draw[fill=black] (n207) circle (0.15cm);
 \draw[fill=black] (n208) circle (0.15cm);
 \draw[fill=black] (n209) circle (0.15cm);
 \draw[fill=black] (n210) circle (0.15cm)
	    node[right] {$v$};
 \draw[fill=black] (n211) circle (0.15cm)
	    node[left] {$\gamma v$};
 \draw[fill=black] (n212) circle (0.15cm)
	    node[left] {$\gamma^{2} v$};

   \foreach \from/\to in {n201/n210,n202/n210,n203/n210,n204/n211,n205/n211,n206/n211,n207/n212,n208/n212,n209/n212} 
    \draw (\from) -- (\to);

   \foreach \from/\to in {n1/n2,n2/n4,n4/n5,n5/n7,n7/n8,n8/n1}
    \draw[dashed] (\from) -- (\to);
   \foreach \from/\to in {n12/n13,n13/n15,n15/n16,n16/n18,n18/n19,n19/n12}
    \draw[dashed] (\from) -- (\to);

\end{tikzpicture}
    \caption{Two $C_n$-symmetric sets $U$ and $T$, with $U$ $n$-critical and $T$ $0$-critical. The case when $n=3$ is shown.}
    \label{fig: U n-crit T 0-crit}
    \end{center}
\end{figure}
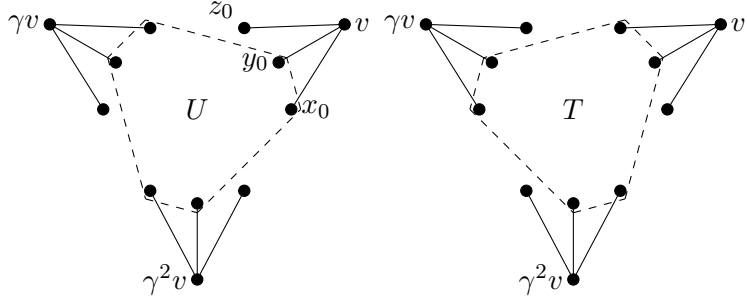

Without loss of generality we may suppose $\{x_0,y_0\}=\{v_1,v_2\}$.
If both edges $v_1v_3, v_2 v_3$ are present, then $\bigcup_{i=0}^{n-1} (\gamma^iW \cup \{z_i, v_i\})$ breaks sparsity. So suppose otherwise.
We will show that one pair from $\{\{v_1, v_3\}, \{v_2, v_3\}\}$ is in neither a 0-critical or a 3-edge-critical set.
Take a pair from $\{\{v_1, v_3\}, \{v_2, v_3\}\}$,  say $\{y_0,z_0\}$.
If $\{y_0,z_0\}$ is in a 0-critical set $T_0$, then $T = \bigcup_{i=0}^{n-1}\gamma^i T_0$ is 0-critical.
We have already shown that $U$ is not $p$-critical for $p\in \{0, \dots, n-1\}$, hence $n = \Bk_U \geq k_U \geq n$, therefore $U$ must be $n$-critical and $i_L(U)=0$.
Then, 
$$k_{U\cup T} \leq k_U + k_T - k_{U\cap T} = n -k_{U\cap T}.$$
Since $i_L(U)=0$, $i_L(U\cap T)=0$, so $k_{U \cap T} = \Bk_{U \cap T} \geq 3$. This gives $k_{U \cup T} \leq n-3$.
Then, $\bigcup_{i=0}^{n-1}\gamma^i v \cup U \cup T$ violates the sparsity of $G$ (see Figure \ref{fig: U n-crit T 0-crit} for an illustration).
Similarly, if $x_0z_0 \notin E$, then $\{x_0, z_0\}$ is not in a 0-critical set, and we can apply the inductive argument from the beginning of the proof to deduce that no $k$ copies of either pair is in a $(k-1)$-critical set.

Now suppose $\{y_0, z_0\}$ is in a 3-edge-critical set $T_0$.
If $x_0z_0 \in E$, then $$\Bk_{U\cup T_0} = \Bk_U + \Bk_{T_0} - \Bk_{U\cap T_0} - d(U, T_0) \leq n + 3 - 2 -1 = n.$$ 
Repeating this with $T_1 = \gamma T_0$, so $\Bk_{U\cup T_0 \cup T_1} \leq n$, until $T_{n-1} = \gamma^{n-1} T_0$ gives $\Bk_{U\cup T_0 \cup \dots \cup T_{n-1}} \leq n$.
This union with $\bigcup_{i=0}^{n-1}\{\gamma^i{v}\}$ breaks sparsity of $G$.
We note that the above contradiction would hold with $x_0z_0 \notin E$ and $\Bk_{U\cap T_0} \geq 3$.
Therefore assume $x_0z_0 \notin E$ and $\Bk_{U\cap T_0} = 2$.
Similar to the above, we arrive at a contradiction if $\{x_0, z_0\}$ is in a 3-edge-critical $S_0$ unless $\Bk_{U\cap S_0} = 2$.
Then 
$$\Bk_{S_0 \cup T_0} = \Bk_{S_0} + \Bk_{T_0} - \Bk_{S_0 \cap T_0} - d(S_0, T_0) \leq 3 + 3 - 2 = 4.$$
By definition, edge-criticality equal to $2$ implies the vertex set is a singleton. Hence $U \cap T_0 = \{y_0\}$ and $U \cap S_0 = \{x_0\}$.
Then $U \cap (S_0 \cup T_0) = \{x_0, y_0\}$, and since $x_0 y_0 \notin E$, $\{x_0, y_0\}$ is 4-edge-critical (as depicted in Figure \ref{fig: U S_0 T_0}).
\begin{figure}
    \centering
    \begin{tikzpicture}
  [scale=.5,auto=left]
  
  \coordinate (n1) at (0,3);
  \coordinate (n2) at (3,3);
  \coordinate (n3) at (4.5,0.5);
  \coordinate (n4) at (4.5,5.5);
  \coordinate (n5) at (9.5,0.5);
  \coordinate (n6) at (9.5,5.5);
  \coordinate (n7) at (11,3);
  \coordinate (n8) at (14,3);
  \coordinate (n9) at (7,3);
  \coordinate (n10) at (2,1.15);
  \coordinate (n11) at (2,4.85);

 \draw[dashed] (4,-1) rectangle (10,7);
 \draw[fill=black] (n1) circle (0.15cm)
	    node[above left] {$z_0$};
 \draw[fill=black] (n2) circle (0.15cm)
	    node[above left] {$v$};
 \draw[fill=black] (n3) circle (0.15cm)
	    node[below left] {$x_0$};
 \draw[fill=black] (n4) circle (0.15cm)
	    node[above left] {$y_0$};
 \draw (n9) node {$U$};
 \draw (n10) node {$S_0$};
 \draw (n11) node {$T_0$};
 \draw[dashed] (5,5.5) arc (30.964:210.964:2.9155);
 \draw[dashed] (5,5.5) -- (0,2.5);
 \draw[dashed] (0,3.5) arc (149.036:329.036:2.9155);
 \draw[dashed] (5,0.5) -- (0,3.5);

  \foreach \from/\to in {n1/n2,n2/n3,n2/n4} 
    \draw (\from) -- (\to);

\end{tikzpicture}
    \caption{$C_n$-symmetric and $n$-edge-critical $U$ with $3$-edge-critical sets $S_0$ and $T_0$.}
    \label{fig: U S_0 T_0}
\end{figure}
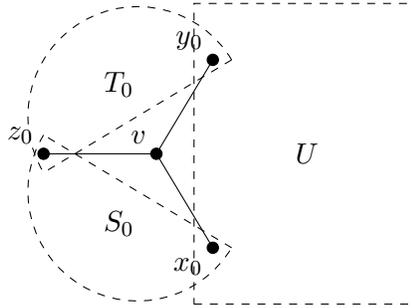

We have
$$\Bk_{U\cup (S_0 \cup T_0)} \leq \Bk_U + \Bk_{S_0 \cup T_0} - \Bk_{U\cap (S_0 \cup T_0)} \leq n + 4 - 4 = n.$$ 
Repeating with $(S_i, T_i) = (\gamma^i S_0, \gamma^i T_0)$ for $i =1, \dots n-1$ implies that $\bigcup_{i=0}^{n-1}(S_i \cup T_i) \cup U$ is $n$-edge-critical. Then adding $v, \dots, \gamma^{n-1}v$ breaks sparsity of $G$.
Therefore one of the pairs $\{x_0, z_0\}$ and $\{y_0, z_0\}$ are not contained in a $3$-edge-critical set.
Without loss of generality, say that it is $\{x_0, z_0\}$.
We can now build an inductive argument, assuming $q$ copies of $\{x_0, z_0\}$ are not contained in a $(q+2)$-critical set for $1\leq q \leq k_1$.
As before with $\{x_0, y_0\}$, suppose $k_1 + 1$ copies of $\{x_0, z_0\}$ are contained in a $(k_1 + 2)$-edge-critical set $R$.
As with $W$, $\gamma^{h_1}R\cup \dots \cup \gamma^{h_n} R$ is $C_n$-symmetric. Hence we have a contradiction unless $\Bk_{\gamma^{h_1}R\cup \dots \cup \gamma^{h_n} R} = n$ for some ordering $h_1, \dots, h_n$ of $\Z_n$.
Recall that $U = \gamma^{j_1}W\cup \dots \cup \gamma^{j_n} W$ and put $R^* =\gamma^{h_1}R\cup \dots \cup \gamma^{h_n} R$.
Then $R^*$ is $C_n$-symmetric.
For any set the edge-criticality is at least 2, hence by sparsity and symmetry $\Bk_{U \cap R^*} \geq n$.
Therefore, 
$$\Bk_{U \cup R^*} \leq \Bk_{U} + \Bk_{R^*} - \Bk_{U \cap R^*} \leq n+n-n = n,$$ 
which, on adding $\bigcup_{i=0}^{n-1}\{\gamma^i v\}$, violates the sparsity of $G$.
This completes the induction for one of $\{x_0, y_0\}$ and $\{x_0, z_0\}$.

If $X$ generates a subgroup of $\mathbb{Z}_n$, let $m$ be such that $m \cdot |\langle X \rangle| = n$.
Then, with $l=|\langle X \rangle|\geq k+1$, there is an ordering $j_1, \dots, j_l$ of $\langle X \rangle$ such that $$\Bk_{\gamma^{j_1}W\cup \dots \cup \gamma^{j_l} W} \leq |\gamma^{j_1}X\cup \dots \cup \gamma^{j_l} X| + 3 - l+k=k+3.$$
Let $W^* = \gamma^{j_1}W\cup \dots \cup \gamma^{j_l} W$. By construction we have $$W^* \cup \gamma W^* \cup \dots \cup \gamma^{m-1}W^* = W \cup  \gamma W \cup \dots \cup \gamma^{n-1}W.$$
Whenever $n\geq 3$ is odd, we have that $$m (k+3) = m(k+1) + 2m \leq ml +2m =n+2m <2n.$$
Hence, 
$$\Bk_{W \cup \dots \cup \gamma^{n-1}W} = m\Bk_{\gamma^{j_1}W\cup \dots \cup \gamma^{j_l} W} < 2n,$$ 
and since $W \cup \dots \cup \gamma^{n-1}W$ is $C_n$-symmetric we may repeat the argument in the paragraph above to obtain a contradiction.
\end{proof}

It was only in the final paragraph of the proof where the proof does not apply to an arbitrary cyclic group.
Consider a $C_{2m}$-tight graph $G$, and $X_2 = \{\{x_0,y_0\},\{x_m,y_m\}\}$ contained in a 4-edge-critical set $W$.
Then assuming each of the sets are disjoint, $W \cup \dots \cup \gamma^{m-1}W$ is $4m=2n$-edge-critical.
A different approach is therefore required in these groups.

We can now present our main results.

\begin{thm}\label{thm: codd recursion}
Let $n$ be a positive odd integer.
A graph $(G,\phi)$ is $C_n$-tight if and only if every $C_n$-symmetrically connected component of $G$ can be generated from $(\mathcal{P}_n,\phi_n)$ or $(LC_n,\psi_n)$ by symmetrised 0-extensions and 1-extensions.
\end{thm}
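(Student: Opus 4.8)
The plan is to follow the scheme of the proof of Theorem~\ref{thm:c2recursion}, with the $C_n$-reduction lemmas of this section in place of the $C_2$-ones.

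\emph{Sufficiency.} Both $(\mathcal{P}_n,\phi_n)$ and $(LC_n,\psi_n)$ consist of a single vertex orbit, hence are $C_n$-symmetrically connected, and a direct count shows they are $C_n$-tight. A symmetrised $0$-extension, $1$-extension or looped $1$-extension adds exactly $n$ vertices and $2n$ edges-and-loops, so it preserves the equality $|E|+|L|=2|V|$; it preserves sparsity by the local arguments already used in Lemmas~\ref{0-ext rigid} and~\ref{1-ext rigid} (the newly added orbit never induces a dense subgraph); and it preserves symmetric connectivity since the new orbit is attached to existing vertices. Hence every graph obtained from a single base graph by these moves is $C_n$-tight and $C_n$-symmetrically connected, and Lemma~\ref{lem: components} then gives that $G$ is $C_n$-tight whenever each $C_n$-symmetrically connected component of $G$ is so obtained.

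\emph{Necessity.} I would argue by induction on $|V|$. By Lemma~\ref{lem: components} it suffices to treat one $C_n$-symmetrically connected component, so assume $G$ is itself $C_n$-tight and $C_n$-symmetrically connected. If $|V|=n$ then $G$ is a single vertex orbit; since $n$ is odd there are no fixed edges or loops, so $E$ and $L$ decompose into orbits of size $n$, and using $|E|+|L|=2n$, $|E|\le 2n-3$, and the fact that sparsity allows at most two loops at any vertex, one checks that $G$ is $(\mathcal{P}_n,\phi_n)$ or $(LC_n,\psi_n)$: this is the base of the induction. Now let $|V|>n$. From $|E|\le 2|V|-3$ and $|E|+|L|=2|V|$ we get $|L|\ge 3$. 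A short count rules out the case that no vertex has one of the four reducible local structures: writing $a,b,c$ for the numbers of vertices with $0,1,2$ incident loops and using $\delta(G)\ge 2$ (as in the $C_2$ case), that a loopless vertex not incident to exactly two or exactly three edges has at least four incident edges, and that a $1$-loop vertex not incident to exactly one or exactly two edges has at least three, one obtains $2|E|=\sum_v\deg_E(v)\ge 4a+3b+c$, while $|E|=2|V|-|L|=2a+b$ forces $b=c=0$ and hence $|L|=0$, a contradiction. (The only non-reducible local type omitted is a vertex incident to two loops and no edge, but then its orbit is a $C_n$-symmetrically connected component equal to $(\mathcal{P}_n,\phi_n)$, contradicting $|V|>n$.) Thus $G$ has a vertex $v$ that is incident to exactly two edges, incident to one edge and one loop, a node with three distinct neighbours, or a node with two distinct neighbours and a loop.

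In the first two cases Lemma~\ref{lem: c_n two edges redu} (whose statement, as noted there and in Figure~\ref{fig:0ext}, covers both variants of the symmetrised $0$-reduction) yields a $C_n$-tight $G'=G\setminus\bigcup_{i=0}^{n-1}\{\gamma^iv\}$; in the third case Lemma~\ref{lem: c_n 3edges redu} yields a $C_n$-tight $1$-reduction; in the fourth Lemma~\ref{lem: c_n 2 edges 1 loop redu} yields a $C_n$-tight looped $1$-reduction. In each case $G$ is recovered from $G'$ by reversing the move. Applying the induction hypothesis to each $C_n$-symmetrically connected component of $G'$ and then re-applying the undone extension exhibits $G$ as generated from $(\mathcal{P}_n,\phi_n)$ or $(LC_n,\psi_n)$, completing the induction.

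The substance of the argument lies not in this recursion but in the reduction lemmas it invokes, and the genuinely hard one is Lemma~\ref{lem: c_n 3edges redu}: for a node with three neighbours one must show that some pair of neighbours can be chosen so that the $1$-reduction preserves both sparsity conditions, which is done by propagating the intersection/union counting of critical and edge-critical sets around a full $\Gamma$-orbit (Lemma~\ref{lem: X_k induction}). This is precisely where oddness of $n$ is essential: for $n=2m$ a $4$-edge-critical set together with its rotates can stack into a $2n$-edge-critical set and the obstruction collapses. Given those lemmas, the remaining work for Theorem~\ref{thm: codd recursion} is the bookkeeping flagged above — the low-degree count, the classification of the $n$-vertex graphs, and verifying that reversing a reduction on a possibly disconnected $G'$ genuinely reassembles $G$ component by component.
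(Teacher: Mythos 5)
Your proposal follows essentially the same route as the paper: reduce to $C_n$-symmetrically connected components via Lemma~\ref{lem: components}, induct on $|V|$ using a minimum-degree count to locate a reducible vertex, and invoke Lemmas~\ref{lem: c_n two edges redu}, \ref{lem: c_n 2 edges 1 loop redu} and \ref{lem: c_n 3edges redu} for the reductions. The extra bookkeeping you supply (the explicit classification of the $n$-vertex base case and the $2|E|\ge 4a+3b+c$ count) only makes explicit what the paper delegates to the proof of Theorem~\ref{thm:c2recursion}, so the argument is correct.
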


\begin{proof}
Any $C_n$-symmetrically connected component generated from one of the base graphs by symmetrised 0-extensions and 1-extensions is $C_n$-tight. For the converse, we show by induction that any $C_n$-tight graph $G$ can be generated from our base graphs.
We may assume by Lemma \ref{lem: components} that $G$ is $C_n$-symmetrically connected.
Suppose the induction hypothesis holds for all graphs with $|V| < m$.
Now let $|V|= m$ and suppose $G$ is not isomorphic to one of the base graphs in Figure \ref{fig: c_5 base}.
For a tight graph, $2\leq \delta(G)\leq 4$.

A degree 2 vertex is reducible by Lemma \ref{lem: c_n two edges redu}.
A degree 3 vertex can have a loop and an edge incident to it or be adjacent to three vertices. The former is reducible by Lemma \ref{lem: c_n two edges redu} and the latter by Lemma \ref{lem: c_n 3edges redu}.
When $\delta(G)=4$, it can be shown (see the proof of Theorem \ref{thm:c2recursion}) that there exists a vertex $w\in V$ incident to a loop and adjacent to two vertices.
Then $w$ is reducible by Lemma \ref{lem: c_n 2 edges 1 loop redu}, completing the proof.
\end{proof}

\begin{thm}\label{thm:maincodd}
Let $n$ be a positive odd integer. A graph $(G,\phi)$ is $C_n$-isostatic if and only if it is $C_n$-tight.
\end{thm}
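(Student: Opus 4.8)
The plan is to mirror the proof of Theorem~\ref{thm:mainc2} exactly, since all the substantive work has by now been done. Necessity is immediate from Theorem~\ref{thm: fixed 2dim lin}: if a $C_n$-symmetric framework is isostatic then its underlying graph is $C_n$-tight. So everything reduces to sufficiency, i.e.\ showing that every $C_n$-tight graph $(G,\phi)$ admits a $C_n$-generic realisation that is isostatic in $\mathbb{R}^2$.

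First I would pass to the $C_n$-symmetrically connected case. By Lemma~\ref{lem: components}, $G$ is $C_n$-tight if and only if each of its $C_n$-symmetrically connected components $H_1,\dots,H_r$ is $C_n$-tight. On the framework side, the rigidity matrix of a disjoint union is block-diagonal with the blocks coming from the components, and a $\Gamma$-symmetric placement can be chosen on each component (and hence on each $\Gamma$-orbit of components) independently; since $\tau(\gamma)$ is an isometry, once one component in an orbit is realised isostatically so are its images. Thus a $C_n$-symmetric realisation of $G$ is isostatic precisely when each $H_i$ is realised isostatically, and it suffices to prove that every $C_n$-symmetrically connected $C_n$-tight graph is $C_n$-isostatic.

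Next I would invoke the recursive construction. By Theorem~\ref{thm: codd recursion}, such a graph is generated from one of the base graphs $(\mathcal{P}_n,\phi_n)$ or $(LC_n,\psi_n)$ by a sequence of symmetrised $0$-extensions and symmetrised $1$-extensions. By Lemma~\ref{lem: base rigid}, both base graphs are $C_n$-isostatic; by Lemmas~\ref{0-ext rigid} and~\ref{1-ext rigid}, a symmetrised $0$- or $1$-extension (in either its unlooped or looped variant) applied to a $C_n$-isostatic graph again yields a $C_n$-isostatic graph for suitable placements of the new vertices and loops. An induction on $|V|$ along the construction sequence then shows every $C_n$-symmetrically connected $C_n$-tight graph is $C_n$-isostatic, and together with the previous paragraph this completes the proof.

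I do not expect a genuine obstacle here: the hard combinatorics is already packaged into Theorem~\ref{thm: codd recursion} (via Lemmas~\ref{lem: c_n two edges redu}, \ref{lem: c_n 2 edges 1 loop redu}, \ref{lem: c_n 3edges redu} and the counting Lemma~\ref{lem: X_k induction}), and the rigidity-preservation of the moves is already Lemmas~\ref{0-ext rigid} and~\ref{1-ext rigid}. The only points needing care are purely bookkeeping: keeping the existential ``isostatic'' statement correctly aligned with the combinatorial ``tight'' statement through the induction, and handling the gap between $C_n$-symmetrically connected components and ordinary connected components when assembling the final $\Gamma$-symmetric isostatic realisation of $G$ from the pieces.
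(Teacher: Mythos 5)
Your proposal is correct and follows essentially the same route as the paper: necessity via Theorem~\ref{thm: fixed 2dim lin}, then sufficiency by induction on $|V|$ using the recursive construction of Theorem~\ref{thm: codd recursion} together with Lemmas~\ref{lem: base rigid}, \ref{0-ext rigid} and \ref{1-ext rigid}. Your extra care over assembling the isostatic realisation from the $C_n$-symmetrically connected components (which are vertex-disjoint, so the rigidity matrix is block-diagonal) is a point the paper leaves implicit, but it is handled correctly.
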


\begin{proof}
    Since $C_n$-isostatic graphs are tight, necessity follows from Theorem \ref{thm: fixed 2dim lin}.
    In Lemma \ref{lem: base rigid}, the base graphs $(\mathcal{P}_n,\phi_n)$ and $(LC_n,\psi_n)$ ($n=5$ depicted in Figure \ref{fig: c_5 base}) are $C_n$-isostatic.
    Hence the sufficiency follows from Theorem \ref{thm: codd recursion} and Lemmas \ref{0-ext rigid} and \ref{1-ext rigid} by induction on $|V|$.
\end{proof}

\section{Concluding remarks} \label{sec:final}

An immediate consequence of  Theorems \ref{thm:mainc2} and \ref{thm:maincodd} is that there are efficient, deterministic algorithms for determining whether a given graph is $C_n$-isostatic (for $n=2$ or $n\geq 3$ odd) since the sparsity counts can be checked using the standard pebble game algorithm \cite{Hen,ls} and  the additional symmetry conditions for the number of fixed vertices and edges can be checked in constant time, from the group action $\phi$.

The next obvious challenge would be to extend the characterisations in Theorems \ref{thm:mainc2} and \ref{thm:maincodd} to deal with the remaining symmetry groups of the plane, as described in Theorem \ref{thm: fixed 2dim lin}.  
For the case of even order cyclic symmetry, the key difficulty to overcome is the one described after the proof of Lemma~\ref{lem: c_n 3edges redu}.
For the case of reflection symmetry there are many new subcases to the analysis of the extension/reduction operations used in this paper resulting from fixed edge, loop and vertex counts being arbitrarily large.
The count given in Theorem \ref{thm: fixed 2dim lin} along with $v_\sigma\geq \max\{l_{\sigma,+},l_{\sigma,-}\}$ must be preserved by any such move.
Further discussion can be found in \cite{Wall}.

If $C_n$ acts freely on the vertices, edges and loops, we \emph{conjecture} that an infinitesimally rigid $C_n$-symmetric linearly constrained framework in $\mathbb{R}^2$ will always have a spanning isostatic subframework with the same symmetry.  
If so, generic infinitesimal rigidity of  $C_n$-symmetric linearly constrained frameworks in $\mathbb{R}^2$, where $C_n$ acts freely, can be characterised in terms of symmetric isostatic subframeworks.  This is in general not true; Figure \ref{fig: c3rigidcounter} provides a small counterexample.

\begin{figure}[ht]
    \begin{center}
    \begin{tikzpicture}
  [scale=.4]

  \coordinate (n1) at (0,3);
  \coordinate (n2) at (-0.866*3,-1.5);
  \coordinate (n3) at (0.866*3,-1.5);
  \coordinate (n0) at (0,0);
  
 \draw[fill=black] (n1) circle (0.15cm);
 \draw[fill=black] (n2) circle (0.15cm);
 \draw[fill=black] (n3) circle (0.15cm);
 \draw[fill=black] (n0) circle (0.15cm);
 
\draw[scale = 4] (0,0)  to[in=-10,out=70,loop] (0,0);
\draw[scale = 4] (0,0)  to[in=110,out=190,loop] (0,0);
\draw[scale = 4] (0,0)  to[in=-130,out=-50,loop] (0,0);
\draw[scale = 4] (0,0.75)  to[in=50,out=130,loop] (0,0);
\draw[scale = 4] (-0.2165*3,-0.375)  to[in=170,out=250,loop] (0,0);
\draw[scale = 4] (0.2165*3,-0.375)  to[in=-70,out=10,loop] (0,0);

   \foreach \from/\to in {n0/n1,n0/n2,n0/n3} 
    \draw (\from) -- (\to);
 
\end{tikzpicture}
    \caption{A $C_3$-symmetric rigid graph with no spanning isostatic $C_3$-symmetric subgraph.}
    \label{fig: c3rigidcounter}
    \end{center}
\end{figure}
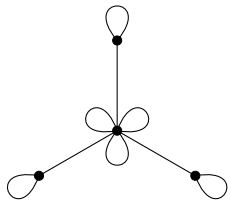

It is conceivable that one may be able to adapt the analysis of \cite{JNT}, in combination with the techniques developed in this article, to extend our characterisations to symmetric linearly constrained frameworks in arbitrary dimension, under appropriate hypotheses on the affine subspace constraints.
In particular, in \cite{Wall}, the necessary conditions for some isostatic $\tau(\Gamma)$-symmetric linearly constrained frameworks in $\R^d$ have been shown.

It would also be interesting to extend our results to symmetric but non-generic linear constraints. In the plane this would give rise to potential analogues of a result of Katoh and Tanigawa \cite{KatTan}. In 3-dimensions this could extend results for surfaces, as obtained in \cite{NOP,NOP14}, for example to allow a small number of `free' vertices.

\section*{Acknowledgements}

AN was partially supported by EPSRC grants EP/W019698/1 and EP/X036723/1.

\bibliographystyle{abbrv}
\def\lfhook#1{\setbox0=\hbox{#1}{\ooalign{\hidewidth
  \lower1.5ex\hbox{'}\hidewidth\crcr\unhbox0}}}

\end{document}